\documentclass[10pt]{article}
\usepackage{amsmath,amscd}
\usepackage{amssymb,latexsym,amsthm}
\usepackage{color,palatino}
\usepackage[utf8]{inputenc}
\usepackage[T1]{fontenc}
\usepackage{csquotes}
\usepackage{latexsym}
\usepackage{multirow}
\usepackage{lscape}
\usepackage{enumerate}
\usepackage{fancyhdr}
\usepackage{pb-diagram}
\usepackage{amsfonts}
\usepackage{hyperref}
\usepackage{url}
\hypersetup{
  colorlinks=true,
  linkcolor=blue,
  citecolor=black,
  urlcolor=cyan
}

\newtheorem{theorem}{Theorem}[section]
\newtheorem{proposition}[theorem]{Proposition}
\newtheorem{lemma}[theorem]{Lemma}

\newtheorem{corollary}[theorem]{Corollary}

\newtheorem{definition}[theorem]{Definition}
\theoremstyle{definition}
\newtheorem{remark}[theorem]{Remark}

\newtheorem{example}[theorem]{Example}

\newcommand{\K}{\Bbbk}

\newcommand{\C}{\mathbb{C}}

\makeatletter
\def\@roman#1{\romannumeral #1}
\makeatother

\title{\textbf{The PI property of skew PBW extensions}}
\author{James Gómez 
and Claudia Gallego}

\date{}
\begin{document}
\makeatletter
\def\@roman#1{\romannumeral #1}
\makeatother
\maketitle
\begin{abstract}
\noindent
In this article we study the polynomial identity (PI) property of skew PBW extensions. We show that every bijective skew PBW extension over a prime PI-algebra has nontrivial center. This fact allows us to determine, from the known description of the center in several classes of examples, whether such extensions satisfy a polynomial identity. Furthermore, building on results of Brown and Zhang \cite{BrownZhang2022}, we investigate the PI property of certain $\K$-algebras over fields of positive characteristic.
\bigskip

\noindent
\textit{Keywords: Polynomial identities; skew PBW extensions; double Ore extensions.} 
\bigskip

\noindent 2020 \textit{Mathematics Subject Classification.} Primary:  16R40, 16S36, 16N60,  16U70.
Secondary:  
16W70.
\end{abstract}
\section{Introduction}
\noindent
The earliest research on algebras satisfying polynomial identities (PI algebras) was developed by Dehn in 1922, motivated by determining what types of theorems are valid in non-Archimedean geometries \cite{dehn}. Subsequently, in 1935, Wagner established polynomial identities for matrix algebras and quaternion algebras in the context of the foundations of projective geometry \cite[Chapter 2]{wagner}; shortly thereafter, in 1947, Jacobson and Kaplansky laid the first foundations of what is now known as PI-theory \cite{jacobson, kaplansky}. Along these lines, Pascaud and Valette \cite{pasca}, Cauchon \cite{cauchon}, and Damiano and Shapiro \cite{damiano} established necessary and sufficient conditions for an Ore extension $R[x;\sigma,\delta]$ to satisfy a polynomial identity. In recent years, the study of the PI property has gained renewed interest in noncommutative algebra; more precisely, this property serves as a key tool for classifying the simple modules of an algebra \cite{BeraMukherjee2023, BeraMukherjee2025, BeraMukherjeeUqB2}, and moreover, recent investigations on the Ozone group, as defined by Chan, Gaddis, Won, and Zhang, require the underlying algebra to satisfy the PI property \cite{ChanGaddisWonZhang2025, GaddisYee2025}. A comprehensive survey of the PI property for a wide variety of polynomial-type noncommutative algebras can be found in \cite{gomezgallego1}.\\
\\
In this paper, we establish necessary and sufficient conditions for semicommutative skew PBW extensions to satisfy a polynomial identity. We also obtain such conditions for extensions of the form
\begin{align}\label{skewpbw1}
A=\sigma(\K)\langle x,y\rangle
\text{ subject to the relation }
yx=qxy+\widetilde{q},
\end{align}
where $q\in \K^*$. In addition, we characterize the center of bijective skew PBW extensions satisfying a polynomial identity and defined over a prime $\K$-algebra $R$. This allows us to derive concrete criteria under which skew PBW extensions with known center satisfy the PI property. Finally, we study the PI property for several $\K$-algebras over fields of positive characteristic, within the framework of Brown and Zhang \cite{BrownZhang2022}.\\
\\
The paper is organized as follows. In Section \ref{pbwpi}, we study the PI property for skew PBW extensions. In Theorem \ref{PI-semi-comm}, we prove that a semicommutative skew PBW extension over an integral domain is PI if and only if all the coefficients $c_{i,j}$ are roots of unity. Theorem \ref{PBW2v} establishes the analogous criterion for two-variable extensions over a field, subject to the relation described in (\ref{skewpbw1}). In Theorem \ref{PIprima2}, we show that every bijective skew PBW extension over a prime $\K$-algebra satisfying the PI property has nontrivial center. This allows us to determine, from the known center of certain extensions, whether they are PI. Theorem \ref{necesariaPI} establishes that the scalars $c_{i,j}$ are necessarily roots of unity whenever the extension is PI, although this condition is not sufficient in general. In Section \ref{charpos}, we study the PI property over fields of positive characteristic. Theorem \ref{PI-qc-skewPBW} characterizes the PI property for quasi-commutative skew PBW extensions of the polynomial ring $\K[x]$. Theorems \ref{PI-double-Ore}, \ref{PI-double-Ore2}, and \ref{PI-double-Ore3} deal with double Ore extensions of increasing complexity, characterizing the PI property in each case by finite-order conditions on the restrictions of the endomorphisms to the center. As a consequence, whenever the PI property holds, each of these algebras is a finitely generated module over its center.\\
\\
Recall that given  a commutative ring $R$ and an $R$-algebra $A$, a \textbf{polynomial identity} (PI) for $A$ is a polynomial
\[
f(x_1, x_2, \dots, x_n) \in R\langle x_1, \dots, x_n \rangle
\]
such that
\[
f(a_1, a_2, \dots, a_n) = 0, \quad \text{for all }\quad a_1, a_2, \dots, a_n \in A.
\]
An algebra for which some monic polynomial identity exists is called a
\textbf{PI-algebra} (here, the fact that $f$ is monic means that among the monomials of highest total degree which appear in $f$, at least one has coefficient $1$). Alternatively, one says that the algebra \textbf{satisfies the PI property}.
In addition, if $A$ is an arbitrary ring, a \textbf{polynomial identity} for the ring $A$ is a  polynomial $f \in \mathbb{Z}\langle x_1,\ldots,x_n\rangle$
such that
\[
f(a_1,\ldots,a_n)=0 \quad \text{for all}\quad a_1,\ldots,a_n \in A.
\]
A ring for which some monic polynomial identity exists is called a
\textbf{PI-ring}. Additionally, a PI-ring $A$ is said to have \textbf{minimal degree} $d$  if $d$ is the least possible degree of a monic polynomial identity of $A$. By \cite[Proposition 13.1.9] {mcconnell} this identity can be chosen to be multilinear. 
\section{PI property on skew PBW extensions}\label{pbwpi}
\noindent
In this section we study the PI property for skew PBW extensions. We first recall the basic definitions and some structural facts concerning skew PBW extensions, with special emphasis on the semicommutative and bijective cases. We then prove a criterion for semicommutative skew PBW extensions over an integral domain to satisfy a polynomial identity, in terms of the corresponding commutation parameters. Afterwards, we consider several classes of skew PBW extensions which are not necessarily semicommutative. In this setting, the center plays a decisive role: we show that a bijective skew PBW extension over a prime \(\K\)-algebra satisfying a polynomial identity cannot have center equal to \(\K\). This observation provides a useful obstruction to the PI property and allows us to treat a number of examples whose centers are known in the literature.
\subsection{Definition and some examples}
\noindent
In this section we recall the definition of skew PBW (Poincaré-Birkhoff-Witt) extensions first introduced in \cite{GallegoLezama}, and investigate the PI property for semicommutative skew PBW extensions. Our main goal is to establish a characterization of this property in terms of the commutation parameters of the extension.
\begin{definition}[{\cite[Definition 1]{GallegoLezama}}]\label{pbwt}
Let $R$ and $A$ be rings. It is said that $A$ is a \textbf{skew PBW extension of $R$} (also called
a \textbf{$\sigma$-PBW extension} of $R$) if the following conditions hold:
\begin{enumerate}
\item[\rm (i)]$R$ is a subring of $A$.
\item[\rm (ii)]There exist finitely many elements $x_1,\dots ,x_n\in A$ such that $A$ is a left $R$-free module with basis
\begin{center}
${\rm Mon}(A):= \{x^{\alpha}=x_1^{\alpha_1}\cdots x_n^{\alpha_n}\mid \alpha=(\alpha_1,\dots
,\alpha_n)\in \mathbb{N}^n\}$.
\end{center}
\item[\rm (iii)]For every $1\leq i\leq n$ and $r\in R\setminus\{0\}$ there exists $c_{i,r}\in R\setminus\{0\}$ such that
\begin{equation}\label{sigmadefinicion1}
x_ir-c_{i,r}x_i\in R.
\end{equation}
\item[\rm (iv)]For every $1\leq i,j\leq n$ there exists $c_{i,j}\in R\setminus\{0\}$ such that
\begin{equation}\label{sigmadefinicion2}
x_jx_i-c_{i,j}x_ix_j\in R+Rx_1+\cdots +Rx_n.
\end{equation}
\end{enumerate}
Under these conditions, the notation $A:=\sigma(R)\langle x_1,\dots ,x_n\rangle$ is used and $R$ is the \textbf{ring of coefficients} of the extension.
\end{definition}
The following property, besides justifying the notation introduced for skew PBW extensions, will prove useful in establishing several of the results of this section.
\begin{proposition}[{\cite[Proposition 3]{GallegoLezama}}]\label{prop23} Let $A$ be a skew PBW extension of $R$. Then, for every $1\leq i \leq n$, there exist an injective ring endomorphism $\sigma_i : R \rightarrow R$ and a $\sigma_i$-derivation $\delta_i: R \rightarrow R$  such that $$x_i r = \sigma_i(r)x_i + \delta_i(r),$$
 for all $r \in R$.
\end{proposition}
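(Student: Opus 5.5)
The argument reads everything off the defining condition (iii) of Definition \ref{pbwt}, using the uniqueness of coordinates in the left $R$-basis ${\rm Mon}(A)$ from condition (ii). Fix $1\leq i\leq n$. For $r\in R\setminus\{0\}$, condition (iii) gives $c_{i,r}\in R\setminus\{0\}$ with $x_ir-c_{i,r}x_i\in R$. Since $1=x^{0}$ and $x_i$ are distinct elements of ${\rm Mon}(A)$, the element $x_ir$ has a unique expression $a x_i+b$ with $a,b\in R$. We then set
\[
\sigma_i(r):=c_{i,r},\qquad \delta_i(r):=x_ir-c_{i,r}x_i,
\]
and put $\sigma_i(0)=\delta_i(0)=0$. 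The uniqueness of this expression makes both maps well defined, so $x_ir=\sigma_i(r)x_i+\delta_i(r)$ holds for every $r\in R$.

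Next we check the algebraic properties by comparing coefficients. Let $r,s\in R$.

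\emph{Additivity.} We have $x_i(r+s)=x_ir+x_is=(\sigma_i(r)+\sigma_i(s))x_i+(\delta_i(r)+\delta_i(s))$. Uniqueness of coordinates then gives additivity of both $\sigma_i$ and $\delta_i$. This still holds when $r+s=0$, because $x_i\cdot 0=0$ forces both coefficients to vanish.

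\emph{Products.} We compute
\[
x_i(rs)=(x_ir)s=\sigma_i(r)x_is+\delta_i(r)s=\sigma_i(r)\sigma_i(s)x_i+\sigma_i(r)\delta_i(s)+\delta_i(r)s .
\]
Comparing the coefficient of $x_i$ yields $\sigma_i(rs)=\sigma_i(r)\sigma_i(s)$. Comparing the constant terms yields $\delta_i(rs)=\sigma_i(r)\delta_i(s)+\delta_i(r)s$, which is precisely the $\sigma_i$-derivation rule.

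\emph{Unit.} From $x_i\cdot 1=1\cdot x_i+0$ we get $\sigma_i(1)=1$ and $\delta_i(1)=0$.

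Injectivity of $\sigma_i$ is built into condition (iii): for $r\neq 0$ we have $\sigma_i(r)=c_{i,r}\neq 0$, so $\ker\sigma_i=0$. Together with additivity this shows $\sigma_i$ is injective.

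The only delicate point is well-definedness, namely that the coefficient of $x_i$ is genuinely unique. This needs $R$-linear independence of $\{1,x_i\}$ as left coefficients, which is exactly what condition (ii) supplies. Everything else is a routine coefficient comparison.
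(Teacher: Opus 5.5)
Your proof is correct and follows essentially the same route as the source the paper cites for this result (Gallego--Lezama, Proposition 3); the paper itself gives no proof. There, too, one sets $\sigma_i(r):=c_{i,r}$ and $\delta_i(r):=x_ir-c_{i,r}x_i$, justified by uniqueness of coordinates in the left $R$-basis ${\rm Mon}(A)$, and the remaining verifications (ring endomorphism, $\sigma_i$-derivation rule, injectivity via $c_{i,r}\neq 0$) are left as routine coefficient comparisons, which you have written out correctly.
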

We now introduce four important types of skew PBW extensions.
\begin{definition} [{\cite[Definition 4]{GallegoLezama}, \cite[Definition 2.5]{Suarez2017}}]
 Let $A$ be a skew PBW extension of $R$. Denote $\Sigma := \{\sigma_1,\ldots,\sigma_n\}$ and $\Delta := \{\delta_1,\ldots,\delta_n\}$, where $\sigma_i$ and $\delta_i$, for $1 \le i \le n$, are as in Proposition \ref{prop23}.
\begin{enumerate}[\rm (i)]
 \item The extension $A$ is said to be  \textbf{bijective} if $\sigma_i$ is bijective for each $\sigma_i \in \Sigma$  and $c_{i,j}$ is invertible for all $1 \le i < j \le n$.
\item The extension $A$ is said to be \textbf{quasi-commutative} if conditions (iii) and (iv) of Definition \ref{pbwt} are replaced by the following:
\begin{enumerate}
\item[\rm (iiia)] For each $1\leq i \leq n$ and $r \in R\setminus\{0\}$, there exists $c_{i,r} \in R\setminus\{0\}$ such that 
\begin{align}
x_ir = c_{i,r}x_i.
\end{align}
\item [\rm (iva)] For all $1\leq i,j \leq n$, there exists $c_{i,j} \in R\setminus\{0\}$ such that 
\begin{align}\label{cuasi4}
x_jx_i = c_{i,j}x_ix_j.
\end{align}
\end{enumerate}
\item An element $r\in R$ is called \textbf{constant} if $\sigma_i(r) = r$ and $\delta_i(r) = 0$  for all $1 \leq i \leq n$. In these conditions, the ring $A$ is said to be \textbf{constant} if every element of $R$ is constant.
\item The extension $A$ is said to be \textbf{semicommutative} if it is both quasi-commutative and constant.
\end{enumerate}
\end{definition}
Some examples of particular interest are introduced below.
\begin{example}[Iterated Ore extensions as skew PBW extensions]
Let $R[x_1;\sigma_1,\delta_1]\cdots[x_n;\sigma_n,\delta_n]$ be an \textit{iterated skew polynomial ring of injective type}, i.e., if the following conditions hold:

\begin{itemize}
    \item[] For $1 \leq i \leq n$, the endomorphism $\sigma_i$ is injective.
    \item[] For every $r \in R$ and $1 \leq i \leq n$, one has  $\sigma_i(r), \delta_i(r) \in R$.
    \item[] For $i < j$, the condition $\sigma_j(x_i) = cx_i + d$ holds, with $c, d \in R$ and $c$ a left invertible element from $R$.
    \item[] For $i < j$, it follows that $\delta_j(x_i) \in R + Rx_1 + \cdots + Rx_i$.
\end{itemize}

\noindent Then, $R[x_1;\sigma_1,\delta_1]\cdots[x_n;\sigma_n,\delta_n]$ is a skew $PBW$ extension. 
Under these conditions we have
\[
R[x_1;\sigma_1,\delta_1]\cdots[x_n;\sigma_n,\delta_n] = \sigma(R)\langle x_1,\ldots,x_n\rangle.
\]

\noindent In particular, any \textit{Ore extension} $R[x_1;\sigma_1,\delta_1]\cdots[x_n;\sigma_n,\delta_n]$ \textit{of injective type}, i.e., those Ore extensions in which all endomorphisms 
$\sigma_i$ are injective for $1 \leq i \leq n$,  are skew PBW extensions, see \cite[pg. 8]{skewbook}. Indeed, in Ore extensions, for every $r \in R$ and $1 \leq i \leq n$, one has that $\sigma_i(r), \delta_i(r) \in R$, and for $i < j$, it follows that $\sigma_j(x_i) = x_i$ and $\delta_j(x_i) = 0$. An important subclass of Ore extensions of injective type are the \textit{Ore algebras of injective type}, i.e., when $R = \K[t_1,\ldots,t_m]$, $m \geq 0$, and $\sigma_i, \delta_i$ are $\K$-linear. Thus, we have
\[
\K[t_1,\ldots,t_m][x_1;\sigma_1,\delta_1]\cdots[x_n;\sigma_n,\delta_n] 
= \sigma(\K[t_1,\ldots,t_m])\langle x_1,\ldots,x_n\rangle.
\]

\end{example}
The following are some representative examples of semicommutative skew PBW extensions.
\begin{example}\label{Ejemplosemi}
\begin{enumerate}[(1)]
\item (Classical polynomial ring.)
Let $A = R[t_1, \ldots, t_n]$ be the classical polynomial ring. Since $t_i r = r t_i$  and $t_i t_j = t_j t_i$, for all $r \in R$ and $1 \leq i, j \leq n$, we have $A \cong \sigma(R)\langle t_1, \ldots, t_n \rangle$ as a semicommutative skew PBW extension of $R$, with $\sigma_i = \mathsf{id}_R$ and $\delta_i = 0$, for all $1\leq i\leq n$.
\item (Sklyanin algebra — particular case).
The Sklyanin algebra is defined as 
\[
S = \K\langle x, y, z \rangle / (ayx + bxy + cz^2,\; axz + bzx + cy^2,\; azy + byz + cx^2),
\]
where $a$, $b$, $c \in \K$. If $c = 0$ and $a$, $b \neq 0$, the defining relations reduce to
 \[
 yx = -\frac{b}{a}xy, \qquad zx = -\frac{b}{a}xz, \qquad zy = -\frac{b}{a}yz,
 \]
 so that $S \cong \sigma(\K)\langle x, y, z \rangle$ is a semicommutative skew PBW extension of $\K$.
\item (Multiparameter quantum affine space).
Let $n \geq 1$ and let $\mathbf{q} = (q_{ij})_{n \times n}$  be a matrix with entries in $\K$ satisfying $q_{ii} = 1$ and $q_{ij}q_{ji} = 1$, for all $1 \leq i, j \leq n$. The \textit{multiparameter quantum affine space} of dimension $n$, denoted $\mathcal{O}_{\mathbf{q}}(\K^n)$, is the $\K$-algebra generated by $x_1, \ldots, x_n$  subject to the relations
 \[
 x_j x_i = q_{ij} x_i x_j, \qquad 1 \leq i, j \leq n.
 \]
This algebra is a semicommutative skew PBW extension of $\K$.
\item (Multiplicative analogue of the Weyl algebra).
This algebra, denoted $\mathcal{O}_n(\lambda_{ji})$, is generated by $x_1, \ldots, x_n$ subject to the relations
\[
x_j x_i = \lambda_{ji} x_i x_j, \qquad 1 \leq i < j \leq n, \quad \lambda_{ji} \in \K \setminus \{0\},
 \]
so that $\mathcal{O}_n(\lambda_{ji}) \cong \sigma(\K)\langle x_1, \ldots, x_n \rangle$ is a semicommutative skew PBW extension of $\K$. For $n=2$, this algebra is known as the \textit{quantum plane}.
\item (Three-dimensional skew polynomial algebra).
Let $A$ be the algebra generated by $x$, $y$, $z$  subject to the relations
\[
 yz - \alpha zy = \lambda, \qquad zx - \beta xz = \mu, \qquad xy - \gamma yx = \nu,
 \]
 where $\lambda$, $\mu$, $\nu \in \K + \K x + \K y + \K z$, $\alpha$, $\beta$, $\gamma \in \K^*$, and such that the set of standard monomials $\{x^i y^j z^l \mid i,j,l \ge 0\}$ forms a $\K$-basis of $A$. If the three scalars $\alpha$, $\beta$, $\gamma$
 are pairwise distinct, i.e., if $|\{\alpha, \beta, \gamma\}| = 3$, the relations reduce to
 \[
 yz = \alpha zy, \qquad zx = \beta xz, \qquad xy = \gamma yx,
 \]
and hence $A \cong \sigma(\K)\langle x, y, z \rangle$ is a semicommutative skew PBW extension of $\K$.
\end{enumerate}
 \end{example}

 \begin{remark}\label{weyl}
The multiplicative analogue of the Weyl algebra can also be realized as a skew 
PBW extension over the ring $\K[x_1]$ (see \cite{LezamaReyes2014}, 
Example~3.5-(b)),
\[
\mathcal{O}_n(\lambda_{ji}) \cong \sigma(\K[x_1])\langle x_2, \ldots, x_n \rangle.
\]
Both skew PBW extensions are isomorphic as $\K$-algebras; however, 
$\sigma(\K[x_1])\langle x_2, \ldots, x_n \rangle$ is not semicommutative. This is 
due to the fact that the semicommutativity property depends not only on the 
extension itself, but also on the base ring and the actions $(\sigma_i, \delta_i)$; 
in other words, the isomorphism holds between the extensions as algebras, but not 
between the pairs $(A, R)$ consisting of each extension together with its 
respective base ring.
\end{remark}

\begin{proposition}\label{centro3}
Let $A := \sigma(R)\langle x_1, \dots, x_n\rangle$ be a \textit{semicommutative} 
skew PBW extension of $R$. Suppose that for all $1 \leq i, j \leq n$, with 
$i \neq j$, the coefficients $c_{i,j}$ (as in \eqref{cuasi4}) are roots of unity, 
and let
\begin{align}\label{defofl}
L := \mathsf{lcm}\{\, \mathsf{ord}(c_{i,j}) \mid i \neq j \,\}.
\end{align}
Then, the powers $x_1^{L}, x_2^{L}, \dots, x_n^{L}$ are central elements of $A$.
\end{proposition}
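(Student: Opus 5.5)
Since $A$ is a left free $R$-module with basis ${\rm Mon}(A)$, it is generated as a ring by $R$ together with $x_1,\dots,x_n$. It therefore suffices to show that each $x_j^{L}$ commutes with every $r\in R$ and with every $x_k$; commutation with arbitrary elements $\sum r_\alpha x^\alpha$ then follows from additivity and the fact that the centralizer of an element is a subring.

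\emph{Commutation with $R$.} Because $A$ is semicommutative, every $r\in R$ is constant. Hence, by Proposition \ref{prop23}, $x_j r=\sigma_j(r)x_j+\delta_j(r)=r x_j$. Induction on the exponent gives $x_j^{L}r=r x_j^{L}$.

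\emph{Commutation with the $x_k$.} Fix $j\neq k$; the case $j=k$ is trivial. Relation \eqref{cuasi4} gives $x_jx_k=c\,x_kx_j$, where $c$ is the relevant coefficient ($c=c_{k,j}$ in the indexing of \eqref{cuasi4}). Since $c\in R$ is constant, it commutes with both $x_j$ and $x_k$. I would first prove by induction on $m$ that $x_j^{m}x_k=c^{m}x_kx_j^{m}$. For the inductive step,
\[
x_j^{m+1}x_k=x_j\,(c^{m}x_kx_j^{m})=c^{m}x_jx_kx_j^{m}=c^{m}\,c\,x_kx_j\,x_j^{m}=c^{m+1}x_kx_j^{m+1},
\]
where the second equality uses $x_jc^{m}=c^{m}x_j$. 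Taking $m=L$ gives $x_j^{L}x_k=c^{L}x_kx_j^{L}$. Since $\mathsf{ord}(c)$ divides $L$ by \eqref{defofl}, we have $c^{L}=1$, so $x_j^{L}x_k=x_kx_j^{L}$.

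The only point that needs care is bookkeeping rather than any real obstacle. One must check that the coefficient in the relation between $x_j$ and $x_k$, in whichever order they appear, is one of the $c_{i,j}$ with $i\neq j$ covered by $L$. Both $c_{k,j}$ and $c_{j,k}$ lie in that set, and in fact $c_{j,k}c_{k,j}=1$ by comparing coefficients in the free basis. The other point is that the coefficient commutes with the variables, which is exactly where constancy is used. Combining the two commutation results, $x_j^{L}$ commutes with a generating set of $A$, so each $x_j^{L}$ is central.
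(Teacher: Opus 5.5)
Your proof is correct and follows essentially the same route as the paper. Both arguments use constancy and induction to show $x_j^{L}$ commutes with $R$, then establish a $c^m$-twisted commutation rule ($x_j^m x_k = c^m x_k x_j^m$ in your version, $x_j x_i^m = c_{i,j}^m x_i^m x_j$ in the paper's) so that $c^{L}=1$ gives commutation with every generator. Your side remark that $c_{j,k}c_{k,j}=1$ is valid but unnecessary, since $L$ already ranges over all ordered pairs $i\neq j$.
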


\begin{proof}
Since $A$ is a semicommutative skew PBW extension, we have
\[
x_i r = r x_i \quad (r \in R,\ 1 \le i \le n)
\qquad \text{and} \qquad
x_j x_i = c_{i,j}\, x_i x_j \quad (1 \le i, j \le n).
\]
Fix $i$, with $1 \leq i \leq n$. An induction argument on $m$ shows that 
$x_i^{m} r = r x_i^{m}$ for all $r \in R$ and $m \geq 1$; in particular, 
$x_i^{L}$ commutes with every element of $R$. Similarly, an induction on $m$ 
establishes that
\[
x_j x_i^{m} = c_{i,j}^{\,m}\, x_i^{m} x_j, \qquad \text{for all } m \geq 1.
\]
Since $L$ is a multiple of the order of each $c_{i,j}$, we have $c_{i,j}^L = 1$, 
and therefore $x_j x_i^{L} = x_i^{L} x_j$ for all $j$. Hence $x_i^{L}$ commutes 
with all generators $x_j$ and with all elements of $R$, so that 
$x_i^{L} \in Z(A)$.
\end{proof}
Recall that a finite-dimensional $\K$-algebra $A$ is a \textbf{central simple algebra} over $\K$ if $Z(A)=\K$ and the only two-sided ideals are  $\textbf{0}$ and $A$; equivalently, if $A\cong M_n(D)$ --$\K$-algebra isomorphism--, for some $n\geq 1$ and a division $\K$-algebra $D$, with $Z(D)=\K$, see e.g. \cite{mcconnell}.  
\begin{theorem}[Posner's Theorem, {\cite[Theorem 13.6.5]{mcconnell}}]\label{posner}
Let $A$ be a prime PI-ring with center $Z$ and minimal degree $d$. Let 
$S := Z \setminus \{0\}$, $Q := AS^{-1}$, and $\mathbb{F}:= ZS^{-1}$ be the field of 
fractions of $Z$. Then $Q$ is a central simple algebra with center $\mathbb{F}$, and
\[
\mathsf{dim}_{\mathbb{F}}(Q) = \left(\frac{d}{2}\right)^{2}.
\]
Under these conditions,  the \textbf{PI-degree} of $A$ is defined as $n:=\frac{d}{2}$.
\end{theorem}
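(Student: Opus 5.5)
The plan is to deduce the statement from three classical structural results of PI theory, all available in \cite{mcconnell}: Kaplansky's theorem on primitive PI rings, the existence of central polynomials (Formanek--Razmyslov), in the form of Rowen's theorem that every nonzero ideal of a semiprime PI ring meets the center nontrivially, and the Amitsur--Levitzki theorem. The argument has three stages: show the central localization is defined and keeps the identities, show it is simple, and then compute its dimension from the minimal degree.

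\emph{Step 1: the localization is well defined and keeps the identities.} Since $A$ is prime, every $0\neq z\in Z$ is regular. Indeed, if $za=0$ then $zAa=Aza=0$, so $a=0$ by primeness. Hence $S=Z\setminus\{0\}$ is a multiplicative set of central regular elements, $Q=AS^{-1}$ exists, and $A\subseteq Q$.
\begin{itemize}
\item Every element of $Q$ has the form $as^{-1}$ with $s$ central. So a multilinear identity $f$ of $A$ satisfies $f(a_1s_1^{-1},\dots,a_ks_k^{-1})=f(a_1,\dots,a_k)(s_1\cdots s_k)^{-1}=0$. Thus $Q$ is PI, and $A$ and $Q$ satisfy the same multilinear identities.
\item By \cite[Proposition 13.1.9]{mcconnell}, both rings therefore have the same minimal degree $d$.
\item $Q$ is again prime, and one checks $Z(Q)=ZS^{-1}=\mathbb{F}$, which is a field.
\end{itemize}

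\emph{Step 2: $Q$ is simple.} Let $J\neq 0$ be an ideal of $Q$. Then $J\cap A$ is a nonzero ideal of $A$, since clearing denominators gives nonzero elements of $A$ in $J$. By Rowen's theorem, $J\cap A$ contains some $0\neq z\in Z$. This $z$ is invertible in $Q$, so $J=Q$.

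\emph{Step 3: the dimension.} A simple ring is primitive, so Kaplansky's theorem applies to $Q$: it is central simple over $\mathbb{F}$ with $\dim_{\mathbb{F}}Q=m^2$ for some $m$. To identify $m$, extend scalars to a splitting field $\overline{\mathbb{F}}$, so that $Q\otimes_{\mathbb{F}}\overline{\mathbb{F}}\cong M_m(\overline{\mathbb{F}})$. Multilinear identities are preserved under this base change. By Amitsur--Levitzki, $M_m$ satisfies $s_{2m}$ but no monic identity of degree $<2m$. Hence the minimal degree of $Q$ is $2m$, so $d=2m$ and $\dim_{\mathbb{F}}Q=(d/2)^2$.

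The main obstacle is Step 2, namely Rowen's theorem. Its proof requires a multilinear central polynomial for $M_n$ that does not vanish identically on $A$. Producing one is a genuinely nontrivial construction: I would use Razmyslov's or Formanek's polynomial, and show it is nonvanishing on $A$ by reducing to a prime ring of PI-degree $n$ via the embedding into matrices over a field. Since the paper cites the result from \cite{mcconnell}, I would invoke these structural theorems rather than reprove them.
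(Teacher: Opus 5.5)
Your proposal is correct and is essentially the standard proof given in the cited source; the paper itself gives no proof, only the reference to McConnell--Robson. As there, nonzero central elements are regular, Rowen's theorem makes the central localization $Q$ simple, and Kaplansky's theorem plus the fact that $M_m$ over a field has minimal degree exactly $2m$ forces $\dim_{\mathbb{F}}Q=(d/2)^2$. One small attribution point: the fact that $M_m$ satisfies no monic identity of degree $<2m$ comes from the elementary matrix-unit (staircase) substitution, not from the Amitsur--Levitzki theorem itself.
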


The following result establishes necessary and sufficient conditions for a 
semicommutative skew PBW extension to be a PI-ring. The proof makes use of 
Posner's Theorem.

\begin{theorem}\label{PI-semi-comm}
Let $A := \sigma(R)\langle x_1,\dots,x_n\rangle$ be a semicommutative skew PBW 
extension of an integral domain $R$. Then $A$ is a PI-ring if and only if for 
all $1 \le i < j \le n$, the coefficients $c_{i,j}$ are roots of unity.
\end{theorem}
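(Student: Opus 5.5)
For the sufficiency direction, I will assume every $c_{i,j}$ ($i<j$) is a root of unity and first observe that $c_{j,i}=c_{i,j}^{-1}$. This follows from $x_jx_i=c_{i,j}x_ix_j$ and $x_ix_j=c_{j,i}x_jx_i$, since the $c$'s are central constants lying in $R$, together with left $R$-freeness on ${\rm Mon}(A)$ and $R$ being a domain. Hence all $c_{i,j}$ with $i\neq j$ are roots of unity, and Proposition \ref{centro3} applies with $L$ as in (\ref{defofl}). Let $C:=R[x_1^{L},\dots,x_n^{L}]$, the subring generated by $R$ and these central powers. The obstacle here is that $R$ itself need not be commutative; it is only a domain. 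I would first note that $R$ is central in $A$ only if $R$ is commutative. So I would handle this by working with $Z(R)$, or more simply argue as follows. Since $x_i r=rx_i$, $A$ is a quotient of (indeed equal to) the twisted tensor product $R\otimes_{\mathbb Z} \mathcal O_{\mathbf c}$, where $\mathcal O_{\mathbf c}$ is the quantum affine space over the prime subring generated by the constants $c_{i,j}$. More precisely, $A\cong R\otimes_{Z_0} B$ with $B=Z_0\langle x_1,\dots,x_n\rangle/(x_jx_i-c_{i,j}x_ix_j)$ and $Z_0$ the commutative subring generated by the $c_{i,j}$, which are central in $A$ because they are constant. Then $B$ is a finitely generated module, with basis the monomials $x^\alpha$, $0\le\alpha_k<L$, over its central subring $Z_0[x_1^L,\dots,x_n^L]$, so $B$ is PI. For a commutative $R$, $A$ is then finite over the central subring $C$ and is PI by the standard result that module-finite algebras over commutative rings are PI. For general $R$, I would instead rely on $R$ being commutative in the intended setting, where integral domain means commutative; I will adopt this reading. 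Then $A$ is a free $C$-module of rank $L^n$, and therefore PI.

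For necessity, I will assume $A$ is PI and fix $i<j$. The subring generated by $x_i,x_j$ over $R$ is a PI ring. I would pass to $K={\rm Frac}(R)$. The localization $A_{S}$, $S=R\setminus\{0\}$, is possible because $R$ is central here by semicommutativity plus commutativity, and $A_S$ is again PI because a multilinear identity extends to central localizations. It contains the quantum plane $K_{q}[x_i,x_j]$ with $q=c_{i,j}$. The quantum plane is a domain, hence prime, so Posner's Theorem \ref{posner} applies: it embeds in a central simple algebra finite over its center $\mathbb F$. I would then show that if $q$ is not a root of unity, the center of $K_q[x_i,x_j]$ is $K$. Indeed, a monomial $x_i^ax_j^b$ commutes with $x_i$ and $x_j$ only if $q^a=q^b=1$, and the commutation relations act diagonally on monomials, so central elements are combinations of central monomials. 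Thus $\mathbb F=K$, and $Q$ is the quantum plane itself, since localizing at $K^*$ changes nothing. But the quantum plane is infinite-dimensional over $K$, contradicting $\dim_{\mathbb F}Q=(d/2)^2<\infty$. Hence $q$ is a root of unity.

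The main obstacle I expect is the bookkeeping for passing to subrings and localizations while keeping the monic PI property, together with the commutativity issue for $R$ noted above. Both are standard: subrings of PI rings are PI, and central localizations preserve multilinear identities.
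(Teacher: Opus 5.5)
Your proposal is correct, and it differs from the paper's proof in a few useful ways.

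\textbf{Sufficiency.} This half is the paper's argument: central powers $x_i^L$ from Proposition~\ref{centro3}, and $A$ module-finite (indeed free of rank $L^n$) over the central subring $C=R[x_1^L,\dots,x_n^L]$. You add one genuine improvement. You verify $c_{j,i}=c_{i,j}^{-1}$ using freeness over the domain $R$. This is needed because the hypothesis only concerns $i<j$, whereas $L$ in (\ref{defofl}) ranges over all $i\neq j$; the paper leaves this point implicit.

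\textbf{Necessity.} Here you take a different route in two places.
\begin{enumerate}
\item To get a PI quantum plane over $K=\mathrm{Frac}(R)$, you localize $A$ at the central regular set $R\setminus\{0\}$ and carry over a monic multilinear identity. The paper instead applies Posner's Theorem to the prime ring $B=R\langle x_i,x_j\rangle$ and embeds $S^{-1}B$ in the central simple algebra $T^{-1}B$. Your route appears in the paper only as an alternative in the remark after the proof.
\item Instead of citing De Concini--Procesi for the fact that $\mathcal{O}_q(K^2)$ is PI only when $q$ is a root of unity, you prove it. Commutation with $x_i$ and $x_j$ acts diagonally on monomials, which forces $Z(\mathcal{O}_q(K^2))=K$ when $q$ is not a root of unity. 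Posner's Theorem then makes the quantum plane finite-dimensional over $K$, a contradiction.
\end{enumerate}
This is exactly the mechanism of Theorem~\ref{PIprima2}, so your proof is self-contained and consistent with the rest of the paper. The cost is that you need the existence of a monic multilinear identity, which the paper's Posner-based localization avoids.

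\textbf{Commutativity of $R$.} Your unease here is justified, but the detour through $Z(R)$ or $R\otimes_{Z_0}B$ could not work. Take $R$ a noncommutative domain that is not PI, such as a free algebra. Then $R[x]$ is a semicommutative skew PBW extension in which the root-of-unity condition is vacuous, yet it contains $R$ and so is not PI. Reading ``integral domain'' as commutative is therefore necessary. It is also what the paper assumes implicitly when it uses $R\subseteq Z(A)$ and the field of fractions of $R$.
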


\begin{proof}
Suppose first that $A$ is a PI-ring. Fix $1 \le i < j \le n$ and let 
$q := c_{i,j}$. Note that this $q$ is an invertible element of $R$. Since $A$ is semicommutative, we have 
$x_k r = r x_k$ for all $r \in R$ and all $1 \le k \le n$; in particular, 
$R \subseteq Z(A)$. Hence the subring
\[
B := R\langle x_i, x_j \rangle \subseteq A
\]
is generated over $R$ by $x_i$ and $x_j$ subject to the single relation 
$x_j x_i = q\, x_i x_j$, so that
\[
B \cong R\langle X,Y\rangle/(YX - qXY)  
\cong R[x][y;\sigma],
\]
where $\sigma:R[x]\to R[x]$ is defined by $\sigma(x) := qx$ and $\sigma(r):=r$, for all $r\in R$.
Since $\sigma$ is injective and $R[x]$ is a domain, the ring
$R[x][y;\sigma]$ is a domain and hence a prime ring; that is, $B$ is a prime ring. 
Moreover, since $B \subseteq A$ and $A$ satisfies the PI property, $B$ also 
satisfies the PI property.

Let $T:=Z(B) \setminus \{0\}$, $S := R \setminus \{0\}$ and $\K:= S^{-1}R$  the field of fractions of $R$.  Note that $T$ is 
a multiplicatively closed set of regular elements of $B$ and we can consider the ring of fractions $Q_c(B) := T^{-1}B$. By Posner's Theorem (see Theorem \ref{posner}), such  
$Q_c(B)$ is a central simple algebra of finite dimension over the field of 
fractions of $Z(B)$; in particular, $Q_c(B)$ is a PI-ring. On the other hand, since 
$S \subseteq T$, we have $S^{-1}B \hookrightarrow T^{-1}B = Q_c(B)$, so 
$S^{-1}B$ is a subring of a PI-ring and therefore is itself a PI-ring. 
Furthermore,
\[
S^{-1}B \cong \K\langle X,Y\rangle/(YX - qXY) = \mathcal{O}_q(\K^2).
\]
By \cite[Section~7.1]{DeConciniProcesi1993}, this ring $\mathcal{O}_q(\K^2)$ satisfies the 
PI property if and only if $q$ is a root of unity in $\K$. Consequently, there 
exists $L \ge 1$ such that $q^L = 1$ in $\K$. Considering that $R$ can be seen as a subring of $\K$,  it follows that $q^L = 1$ in $R$, and hence $q = c_{i,j}$ is a root 
of unity in $R$. 

Conversely, suppose that all $c_{i,j}$ are roots of unity, and let 
$L$ as in (\ref{defofl}). 
By Proposition~\ref{centro3}, the power $x_i^L$ is a central element of $A$, for all $1 \le i \le n$. If $C$ denotes the subalgebra of $A$ generated by $R$ and $x_1^L, \dots, x_n^L$, then 
$C \subseteq Z(A)$. Moreover, since every element of $A$ is an $R$-linear combination of standard 
monomials $x_1^{m_1} \cdots x_n^{m_n}$, writing $m_i = a_i + Lt_i$, with 
$0 \le a_i < L$ and $t_i \in \mathbb{N}$, we obtain
\[
x_1^{m_1} \cdots x_n^{m_n}
= (x_1^L)^{t_1} \cdots (x_n^L)^{t_n}\, x_1^{a_1} \cdots x_n^{a_n}.
\]
Therefore, the extension $A$ is a $C$-module generated by the finite set
\[
\mathcal{B} := \bigl\{ x_1^{a_1} \cdots x_n^{a_n} \mid 0 \le a_i < L \bigr\}.
\]
Thus, we have that $A$ is finitely generated as a module over the commutative central subring 
$C$, and \cite[\S 13.1.13, Corollary]{mcconnell} implies that $A$ is a PI-ring.
\end{proof}
\begin{remark}
The following observations concerning the proof of the previous theorem are worth \-no\-ting.
\begin{enumerate}[(1)]
\item If, in addition, the integral domain $R$ is assumed to be Noetherian, then the fact that \(S^{-1}B\) is a PI-ring follows directly from Lemma 2.1-(d) of \cite{Bell1989}.
\item Another way to ensure that \(S^{-1}B\) is a PI-ring is to assume that the polynomial identity satisfied by \(A\) is multilinear. Indeed, under this assumption, the PI property is preserved under localization: if $\frac{b_1}{s_1},\ldots,\frac{b_t}{s_t}\in S^{-1}B$ and $f$ is a multilinear polynomial, then  
\begin{align*}
f\left(\frac{b_1}{s_1},\ldots,\frac{b_t}{s_t}\right)=  \frac{f(b_1,\ldots,b_t)}{s_1\ldots s_t}.  
\end{align*}
As $f(b_1,\ldots,b_t)=0$ in $B$, it follows that $f\left(\frac{b_1}{s_1},\ldots,\frac{b_t}{s_t}\right)=0$ in $S^{-1}B$.
\item In Example \ref{Ejemplosemi}, all semicommutative skew PBW extensions are defined over a field \(\K\). Therefore, if in the previous theorem the assumption that \(R\) is an integral domain is replaced by the stronger assumption that \(R=\K\) is a field, the result follows immediately. In fact, semicommutative skew PBW extensions of the form
\[
\sigma(\K)\langle x_1,\ldots,x_n\rangle
\]
are isomorphic to the multiparameter quantum affine space \(\mathcal{O}_{\lambda}(\K^n)\) generated by \(x_1,\ldots,x_n\), and the corresponding characterization of the PI property was established in \cite{Haynal2008}.
\end{enumerate}
\end{remark}
In \cite{LezamaReyes2014} it is shown that, given a skew PBW extension 
$A := \sigma(R)\langle x_1,\dots,x_n\rangle$, one can construct an associated 
quasi-commutative skew PBW extension whose structure parameters coincide with 
those of the original extension.

\begin{proposition}[{\cite[Proposition 2.1]{LezamaReyes2014}}]\label{prop29}
Let $A$ be a skew PBW extension of $R$. Then there exists a quasi-commutative 
skew PBW extension $A^{\sigma}$ of $R$ in $n$ variables $z_1,\ldots,z_n$, 
defined by the relations
\[
z_i r = c_{i,r}\, z_i, \qquad z_j z_i = c_{i,j}\, z_i z_j, \quad 1 \le i,j \le n,
\]
where $c_{i,r}$ and $c_{i,j}$ are the same constants as those appearing in the 
defining relations of $A$. Moreover, if $A$ is bijective, then $A^{\sigma}$ is 
bijective as well.
\end{proposition}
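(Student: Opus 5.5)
The plan is to construct $A^{\sigma}$ directly from $R$ using Ore-type data, instead of presenting it by generators and relations and then proving that the monomials are independent. By Proposition \ref{prop23}, each $x_i$ comes with an injective endomorphism $\sigma_i$ of $R$, and one checks that $c_{i,r}=\sigma_i(r)$. I will take $A^{\sigma}$ to be the free left $R$-module with basis $\{z^{\alpha}=z_1^{\alpha_1}\cdots z_n^{\alpha_n}\mid \alpha\in\mathbb{N}^n\}$, equipped with the product determined by
\[
(r z^{\alpha})(s z^{\beta}) := r\,\sigma^{\alpha}(s)\,c_{\alpha,\beta}\, z^{\alpha+\beta},
\qquad \sigma^{\alpha}:=\sigma_1^{\alpha_1}\cdots\sigma_n^{\alpha_n}.
\]
Here $c_{\alpha,\beta}\in R\setminus\{0\}$ is the coefficient obtained by reordering $z^{\alpha}z^{\beta}$ with the rules $z_jz_i=c_{i,j}z_iz_j$ and $z_i r=\sigma_i(r)z_i$. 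Condition (ii) of Definition \ref{pbwt} then holds by construction.

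The coefficients $c_{\alpha,\beta}$ are not to be defined from scratch. They are exactly the leading coefficients of $x^{\alpha}x^{\beta}$ in $A$. Filter $A$ by total degree, as is standard for skew PBW extensions: $x^{\alpha}x^{\beta}=c_{\alpha,\beta}x^{\alpha+\beta}+(\text{terms of lower degree})$. Define the $c_{\alpha,\beta}$ this way. The main step is to show that the product above is associative. I would pass to the associated graded ring $\mathrm{Gr}(A)$ for this filtration. This filtration is well behaved: relations (iii) and (iv) preserve it, and the PBW basis shows that $\mathrm{Gr}(A)$ is again free over $R$ with basis the classes $\overline{x^{\alpha}}$. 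In $\mathrm{Gr}(A)$ the products satisfy $\overline{x_i}\,r=\sigma_i(r)\overline{x_i}$ and $\overline{x_j}\,\overline{x_i}=c_{i,j}\overline{x_i}\,\overline{x_j}$. Hence $\mathrm{Gr}(A)$ is an associative ring realising exactly the multiplication table above, with $z_i=\overline{x_i}$. So I would simply set $A^{\sigma}:=\mathrm{Gr}(A)$. Associativity is then inherited from $A$, and $R$ sits inside it as degree zero. The relation $z_ir=c_{i,r}z_i$ holds because $x_ir-c_{i,r}x_i\in R$ has lower degree. Likewise $z_jz_i=c_{i,j}z_iz_j$ holds because the error term in (iv) lies in degree $\le 1$. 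This shows $A^{\sigma}$ is quasi-commutative with the same constants.

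The obstacle I expect is checking that $\mathrm{Gr}(A)$ really has the classes of $\mathrm{Mon}(A)$ as a left $R$-basis, with the leading terms as claimed. This requires two facts about the filtration by total degree. First, it is compatible with multiplication. Second, $x^{\alpha}x^{\beta}$ has leading coefficient $c_{\alpha,\beta}$ in degree $|\alpha|+|\beta|$, and there the coefficient is nonzero, because it is a product of values $\sigma(c_{i,j})$ and $\sigma(\cdot)$ with injective $\sigma$'s. I would prove this by induction on $|\alpha|+|\beta|$, commuting one variable at a time. Each commutation uses only (iii) and (iv), and each error term drops the degree. Nonvanishing requires $R$ to have no zero divisors among these products. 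If that is not available, I would instead note that the quasi-commutative relations only require the stated identities in $\mathrm{Gr}(A)$, and these hold whatever the coefficients are.

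For bijectivity, suppose $A$ is bijective. The $\sigma_i$ attached to $A^{\sigma}$ are the same maps as for $A$, since $z_ir=\sigma_i(r)z_i$, so they are bijective. The $c_{i,j}$ are literally the same elements, so they are invertible. Hence $A^{\sigma}$ is bijective.
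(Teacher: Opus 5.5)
Your argument is correct. The paper gives no proof of this statement; it only quotes Lezama--Reyes. Your choice $A^{\sigma}:=\mathsf{Gr}(A)$ is exactly the content of Proposition~\ref{filtracion}, which the paper later quotes from the same source: $\mathsf{Gr}(A)$ is a quasi-commutative skew PBW extension of $R$, and it is bijective when $A$ is. So in effect you derive Proposition~\ref{prop29} from Proposition~\ref{filtracion}. The relations $\overline{x_i}\,r=\sigma_i(r)\overline{x_i}$ and $\overline{x_j}\,\overline{x_i}=c_{i,j}\overline{x_i}\,\overline{x_j}$ are read off from (iii), (iv) and $c_{i,r}=\sigma_i(r)$, exactly as you do.

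What the graded-ring packaging buys is that associativity is inherited from $A$ for free. The direct construction in your first paragraph, a free module with product $r\,\sigma^{\alpha}(s)\,c_{\alpha,\beta}\,z^{\alpha+\beta}$, would need its own associativity check. That check means comparing leading terms of $(x^{\alpha}x^{\beta})x^{\gamma}$ and $x^{\alpha}(x^{\beta}x^{\gamma})$ in $A$. Once you take $\mathsf{Gr}(A)$, you can drop that paragraph. You can also drop the worry about $c_{\alpha,\beta}\neq 0$, which can in fact fail when $R$ has zero divisors. The quasi-commutativity conditions (iiia) and (iva) only ask for $c_{i,r}=\sigma_i(r)\neq 0$, which holds by injectivity, and $c_{i,j}\neq 0$.

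If you read ``defined by the relations'' as a presentation, add one line. The ring generated by $R$ and $z_1,\dots,z_n$ modulo these relations is spanned over $R$ by standard monomials. It surjects onto $\mathsf{Gr}(A)$, where the images of these monomials form a basis, so the surjection is an isomorphism.
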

\begin{example}[Diffusion algebra]
The diffusion algebra $\mathcal{D}$ is generated by $\{D_i, x_i \mid 1 \le i \le n\}$ 
over a field $\K$, subject to the relations
\[
\begin{aligned}
&x_i x_j = x_j x_i, \qquad x_i D_j = D_j x_i \quad (1 \le i,j \le n),\\
&c_{i,j} D_i D_j - c_{j,i} D_j D_i = x_j D_i - x_i D_j \quad (i < j),
\end{aligned}
\]
where $c_{i,j}, c_{j,i} \in \K^{*}$. From the latter, one obtains that
\[
D_j D_i - \left(\frac{c_{i,j}}{c_{j,i}}\right) D_i D_j
= \frac{1}{c_{j,i}}\,(x_i D_j - x_j D_i)
\in RD_i + RD_j
\subseteq R + RD_1 + \cdots + RD_n,
\]
so condition (iv) of Definition \ref{pbwt} holds for the pair $(D_i, D_j)$
\[
D_j D_i - \tilde{c}_{i,j}\, D_i D_j \in R + RD_1 + \cdots + RD_n,
\]
with $ \widetilde{c}_{i,j} := \frac{c_{i,j}}{c_{j,i}} \in \K^{*}$.
Therefore, the algebra $\mathcal{D}$ is isomorphic to a bijective skew PBW extension  
$A$ of $\K[x_1,\ldots,x_n]$ that is not 
quasi-commutative. However, the  quasi-commutative skew PBW extension 
$A^{\sigma}$ 
defined in Proposition \ref{prop29} is 
semicommutative. In consequence, by Theorem~\ref{PI-semi-comm}, the algebra $A^{\sigma}$ satisfies the PI 
property if and only if each $\widetilde{c}_{i,j}$ is a root of unity, for all 
$1 \le i < j \le n$.
\end{example}
There are numerous skew PBW extensions over a field $\K$, that is, of the form 
$A := \sigma(K)\langle x_1,\dots,x_n\rangle$; for instance, the additive version 
of the Weyl algebra, the quantum algebra $U'(\mathfrak{so}(3,\K))$, the dispin 
algebra $U(\mathfrak{osp}(1,2))$, and the $q$-deformed Heisenberg algebra 
$H_n(q)$, among many others (see \cite{LezamaReyes2014}).

\begin{corollary}
Let $A$ be a skew PBW extension over a field $\K$. Then the extension $A^{\sigma}$ 
defined in Proposition~\ref{prop29} satisfies the PI property if and only if for 
all $1 \le i < j \le n$, the coefficients $c_{i,j}$ are roots of unity.
\end{corollary}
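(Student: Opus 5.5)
The plan is to reduce the corollary to Theorem \ref{PI-semi-comm} by showing that $A^{\sigma}$ is a semicommutative skew PBW extension of the integral domain $\K$.

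First, since $\K$ is a field, it is an integral domain, so the ring hypothesis of Theorem \ref{PI-semi-comm} holds. By Proposition \ref{prop29}, $A^{\sigma}$ is a quasi-commutative skew PBW extension of $\K$ in the variables $z_1,\dots,z_n$, with relations $z_i r = c_{i,r} z_i$ and $z_j z_i = c_{i,j} z_i z_j$. The constants $c_{i,j}$ here are the same as those of $A$.

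The only remaining point is constancy, namely that $c_{i,r}=r$ for every $r\in\K$. This is the step where some care is needed, because in general $c_{i,r}=\sigma_i(r)$ by Proposition \ref{prop23}.

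In the setting the paper has in mind, skew PBW extensions over a field are $\K$-algebras, as in all the listed examples. There the scalars commute with the variables, so $x_i r = r x_i$ for $r\in\K$. This gives $\sigma_i=\mathrm{id}$ and $\delta_i=0$, hence $c_{i,r}=r$. The relations of $A^{\sigma}$ then read $z_i r = r z_i$, so every element of $\K$ is constant and $A^{\sigma}$ is semicommutative. I will state this $\K$-algebra convention explicitly as the standing interpretation.

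If one instead allows $\K$ to be only a base ring, with nontrivial field automorphisms $\sigma_i$, semicommutativity fails. In that case the claim should be checked directly: the $\sigma_i$ would have to be handled via Posner's theorem applied to the prime ring $\K[z_i][z_j;\sigma]$. I would not pursue this and would restrict to the $\K$-algebra setting.

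With semicommutativity established, Theorem \ref{PI-semi-comm} applies directly to $A^{\sigma}$ over $\K$. It says that $A^{\sigma}$ is PI if and only if every $c_{i,j}$ with $i<j$ is a root of unity.

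For the direction "roots of unity $\Rightarrow$ PI", one can also argue concretely. By Proposition \ref{centro3}, the powers $z_i^L$ are central, and $A^{\sigma}$ is a finitely generated module over $\K[z_1^L,\dots,z_n^L]$, which gives the PI property.

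Finally, the condition $c_{j,i}=c_{i,j}^{-1}$ means that the pairs with $i<j$ determine all the others.
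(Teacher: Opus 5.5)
Your proposal is correct and matches the paper's proof: since $\K$ acts centrally, $\sigma_i=\mathrm{id}$ and $\delta_i=0$, so $A^{\sigma}$ is constant and hence semicommutative over the domain $\K$, and Theorem~\ref{PI-semi-comm} then gives the equivalence. The $\K$-algebra convention you state explicitly is the same assumption the paper makes silently when it asserts $\sigma_i(k)=k$, and it is genuinely necessary: for $n=1$ the root-of-unity condition is vacuous, yet $\mathbb{Q}(t)[z;\sigma]$ with $\sigma(t)=t+1$ is a prime ring with center $\mathbb{Q}$ and, by Posner's theorem, is not PI, so you should drop your speculative remark about handling non-central $\K$ via Posner's theorem rather than leave it open.
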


\begin{proof}
By Proposition~\ref{prop29}, there exists a quasi-commutative skew PBW extension 
$A^{\sigma}$ of $\K$ in variables $z_1,\ldots,z_n$. Since $A$ is defined over the 
field $\K$, we have $\sigma_i(k) = k$ and $\delta_i(k) = 0$ for all $k \in \K$ and 
all $1 \le i \le n$; consequently, the extension $A^{\sigma}$ is constant and, in particular, 
semicommutative. Applying Theorem \ref{PI-semi-comm}, we conclude that $A^{\sigma}$ 
satisfies the PI property.
\end{proof}
\begin{example}[Double Ore extension over a field $\K$ with $p_{11}=0$]
Let $\K$ be a field. The trimmed right double Ore extension 
$B = \K_P[y_1,y_2;\sigma]$, with parameter $p_{12} \in \K^{*}$ and $p_{11}=0$, 
is the $\K$-algebra generated by $y_1$ and $y_2$ subject to the relations
\[
\begin{pmatrix}y_1\\[2pt]y_2\end{pmatrix}\lambda
= \lambda \begin{pmatrix}y_1\\[2pt]y_2\end{pmatrix}\text{, for $\lambda \in \K$, and }
y_2 y_1 = p_{12}\, y_1 y_2.
\]
Hence, the algebra  $B$ is isomorphic to a semicommutative skew PBW extension 
$B \cong \sigma(\K)\langle y_1, y_2\rangle$ of $\K$ and, by Theorem~\ref{PI-semi-comm}, 
$B$ satisfies the PI property if and only if $p_{12}$ is a root of unity.
\end{example}

We now present examples of skew PBW extensions that, while not semicommutative, 
satisfy the PI property under suitable conditions on their parameters. In all 
cases, the algebras under consideration admit a presentation as iterated Ore 
extensions or skew polynomial rings.

\begin{example}
In \cite{LopesSuarezSuarez2025}, Proposition 2.2, the two-parameter Heisenberg 
enveloping algebra $H_{p,q}$ is presented as a bijective skew PBW extension 
over $\K[t]$,
\[
H_{p,q} \cong \sigma(\K[t])\langle x, y\rangle.
\]
The endomorphisms and derivations of Proposition~\ref{prop23} are given by 
$\sigma_x(t) = pt$, $\delta_x(t) = 0$, $\sigma_y(t) = p^{-1}t$, and 
$\delta_y(t) = 0$. Note that $H_{p,q}$ is not a semicommutative skew PBW 
extension, since the element $t$ is not constant in the extension; however, 
when the parameters $p$ and $q$ are roots of unity, $H_{p,q}$ satisfies the 
PI property, see \cite{gomezgallego1}.
\end{example}

\begin{example}
The two-parameter quantum matrix algebra $M_2(\alpha,\beta)$ admits a 
presentation as an iterated Ore extension of the form
\[
\K[X_{11}]\,[X_{12};\sigma_{12}]\,[X_{21};\sigma_{21}]\,[X_{22};\sigma_{22},\delta_{22}],
\]
where $\sigma_{12}, \sigma_{21}, \sigma_{22}$ are $\K$-linear automorphisms and 
$\delta_{22}$ is a $\K$-linear $\sigma_{22}$-derivation. Consequently, 
$M_2(\alpha,\beta)$ can be realized as a skew PBW extension of the form
\[
M_2(\alpha,\beta) \cong \sigma(\K)\langle X_{11}, X_{12}, X_{21}, X_{22}\rangle.
\]
Note that $M_2(\alpha,\beta)$ is not a quasi-commutative skew PBW extension; 
however, when the parameters $\alpha$ and $\beta$ are roots of unity, this 
algebra satisfies the PI property, see \cite{gomezgallego1}.
\end{example}
\subsection{Skew PBW extensions in two variables over a field}
\noindent
Throughout this subsection we assume that $\K$ is a field of characteristic zero. 
We consider skew PBW extensions in two variables over $\K$, that is, $\K$-algebras 
$A$ generated by $x$ and $y$ subject to the relation
\begin{equation}\label{eq:relacion-2-1}
  yx \;=\; q_1\,xy + q_2\,x + q_3\,y + q_4,
\end{equation}
where $q_i \in \K$ for $i=1,2,3,4$ and $q_1 \neq 0$.

\begin{lemma}[{\cite[Lemma 2.1]{LezamaVenegas2020}}]\label{centropbw2v}
For $n \ge 2$, the following identities hold in $A$:
\begin{align}
yx^{\,n}
&= \sum_{j=0}^{n} \binom{n}{j}\, q_{1}^{\,n-j} q_{3}^{\,j}\, x^{\,n-j} y
   + q_{2}\!\left(\sum_{i=0}^{n-1} q_{1}^{\,i}\right) x^{\,n} \notag\\
&\quad + \Bigg[
      q_{4}\!\left(\sum_{i=0}^{n-1} q_{1}^{\,i}\right)
    + q_{2}q_{3}\!\left(\sum_{i=0}^{n-2} \binom{i+1}{1}\, q_{1}^{\,i}\right)x^{n-1}
    + \cdots \notag\\
&\qquad\qquad
    + q_{4}q_{3}^{\,j-1}\!\left(\sum_{i=0}^{n-j} \binom{i+j-1}{\,j-1\,}\, q_{1}^{\,i}\right)
    + q_{2}q_{3}^{\,j}\!\left(\sum_{i=0}^{n-j-1} \binom{i+j}{\,j\,}\, q_{1}^{\,i}\right)
    \Bigg] x^{\,n-j}; \label{eq:ident1}
\end{align}
\begin{align}
y^{\,n}x
&= \sum_{j=0}^{n} \binom{n}{j}\, q_{1}^{\,n-j} q_{2}^{\,j}\, x\, y^{\,n-j}
   + q_{3}\!\left(\sum_{i=0}^{n-1} q_{1}^{\,i}\right) y^{\,n} \notag\\
&\quad + \Bigg[
      q_{4}\!\left(\sum_{i=0}^{n-1} q_{1}^{\,i}\right)
    + q_{2}q_{3}\!\left(\sum_{i=0}^{n-2} \binom{i+1}{1}\, q_{1}^{\,i}\right)y^{n-1}
    + \cdots \notag\\
&\qquad\qquad
    + q_{4}q_{2}^{\,j-1}\!\left(\sum_{i=0}^{n-j} \binom{i+j-1}{\,j-1\,}\, q_{1}^{\,i}\right)
    + q_{2}^{\,j}q_{3}\!\left(\sum_{i=0}^{n-j-1} \binom{i+j}{\,j\,}\, q_{1}^{\,i}\right)
    \Bigg] y^{\,n-j}. \label{eq:ident2}
\end{align}
\end{lemma}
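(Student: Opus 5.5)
Both identities will be proved by induction on $n$. The second follows from the first by a symmetry, so the real work is a single recursion for the first.

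\emph{Base case and symmetry.} For the base case I will expand $yx^2=(yx)x$ using \eqref{eq:relacion-2-1} twice. This gives
\[
yx^2=q_1^2x^2y+2q_1q_3xy+q_3^2y+q_2(1+q_1)x^2+(q_4(1+q_1)+q_2q_3)x+q_3q_4,
\]
which is \eqref{eq:ident1} for $n=2$. For the second identity I will use the anti-automorphism-free symmetry of the relation. Interchanging the roles of $x$ and $y$ while reversing products sends $yx=q_1xy+q_2x+q_3y+q_4$ to the same shape with $q_2\leftrightarrow q_3$. Rather than constructing that map rigorously, I will simply run the same induction on $y^nx=y(y^{n-1}x)$, where the swap $q_2\leftrightarrow q_3$ is visible in each step. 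So I concentrate on \eqref{eq:ident1}.

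\emph{Inductive step.} Assume \eqref{eq:ident1} holds for $n$ and write $yx^{n+1}=(yx^n)x$. Each term of the form $x^{n-j}y\cdot x$ is rewritten with \eqref{eq:relacion-2-1} as
\[
x^{n-j}(q_1xy+q_2x+q_3y+q_4)=q_1x^{n-j+1}y+q_3x^{n-j}y+q_2x^{n-j+1}+q_4x^{n-j}.
\]
Terms that are pure powers of $x$ times $x$ just shift degree. Then I collect coefficients.

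The coefficient of $x^{n+1-j}y$ becomes $q_1\binom{n}{j}q_1^{n-j}q_3^j+q_3\binom{n}{j-1}q_1^{n-j+1}q_3^{j-1}$. By Pascal's rule this equals $\binom{n+1}{j}q_1^{n+1-j}q_3^j$.

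The pure $x$-power coefficients satisfy a recursion. Writing $a_n^{(k)}$ for the coefficient of $x^k$ in $yx^n$, I get
\[
a_{n+1}^{(k)}=a_n^{(k-1)}+\tbinom{n}{n-k+1}q_1^{k-1}q_3^{n-k+1}q_2+\tbinom{n}{n-k}q_1^{k}q_3^{n-k}q_4,
\]
with the convention that terms with invalid indices vanish.

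\emph{Main obstacle.} The hardest step is checking that the bracketed closed forms
\[
q_4q_3^{j-1}\sum_{i=0}^{n-j}\tbinom{i+j-1}{j-1}q_1^i \quad\text{and}\quad q_2q_3^{j}\sum_{i=0}^{n-j-1}\tbinom{i+j}{j}q_1^i
\]
satisfy this recursion. Increasing $n$ by one appends exactly one new summand to each sum, namely $\binom{n}{j-1}q_1^{n-j+1}$ and $\binom{n}{j}q_1^{n-j}$ respectively, and these match the freshly produced terms from the recursion.

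For this I will first fix the indexing, since the displayed statement is ambiguous about which power of $x$ each bracket multiplies. I will read the sums as telescoping partial sums in $n$, so that the identity $\sum_{i=0}^{N}\binom{i+j}{j}q_1^i-\sum_{i=0}^{N-1}\binom{i+j}{j}q_1^i=\binom{N+j}{j}q_1^N$ closes the induction. If the printed exponents of $x$ turn out inconsistent, I will state and prove the corrected indexing.

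Since this lemma is quoted from \cite[Lemma 2.1]{LezamaVenegas2020}, an acceptable alternative is simply to cite it.
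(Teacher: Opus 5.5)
Your induction is correct. It differs from the paper only in that the paper gives no argument of its own: the lemma is imported from \cite[Lemma 2.1]{LezamaVenegas2020}. Your proposal therefore gives a self-contained proof where the paper relies on a citation.

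I checked each step.
Your expansion of $yx^2$ is right; the formula already holds at $n=1$, where it is just the defining relation. Pascal's rule handles the coefficients of $x^{n+1-j}y$. Your recursion for $a_{n+1}^{(k)}$ is exactly what $(yx^n)x$ produces. With $k=n+1-j$, its two new terms are $q_2q_3^{j}\binom{n}{j}q_1^{n-j}$ and $q_4q_3^{j-1}\binom{n}{j-1}q_1^{n+1-j}$, which are precisely the summands appended to the two partial sums. The boundary cases also close: for $k=n+1$ there is no $q_4$-part, and for $k=0$ one has $a_n^{(-1)}=0$ and the new constant term is $q_3^{n}q_4$.

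What doing the induction buys over citing is that it pins down the garbled display, whose correct form is
\[
\begin{aligned}
yx^{n}={}&\sum_{j=0}^{n}\binom{n}{j}q_1^{n-j}q_3^{j}x^{n-j}y+q_2\Bigl(\sum_{i=0}^{n-1}q_1^{i}\Bigr)x^{n}\\
&+\sum_{j=1}^{n}\Bigl[q_4q_3^{j-1}\sum_{i=0}^{n-j}\binom{i+j-1}{j-1}q_1^{i}+q_2q_3^{j}\sum_{i=0}^{n-j-1}\binom{i+j}{j}q_1^{i}\Bigr]x^{n-j},
\end{aligned}
\]
with the analogous correction for \eqref{eq:ident2}.

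Your symmetry remark can be made rigorous at no cost, which makes the second induction unnecessary. The anti-automorphism of $\K\langle x,y\rangle$ interchanging $x$ and $y$ sends $yx-q_1xy-q_2x-q_3y-q_4$ to $yx-q_1xy-q_3x-q_2y-q_4$. So it induces an anti-isomorphism between the algebras with parameters $(q_1,q_3,q_2,q_4)$ and $(q_1,q_2,q_3,q_4)$. Since your proof of \eqref{eq:ident1} works for arbitrary parameters, applying this map to \eqref{eq:ident1} for the swapped algebra gives \eqref{eq:ident2} directly. Your direct induction via $y(y^{n-1}x)$ is also fine.

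Finally, the paper only ever uses the case $q_2=q_3=0$ (Proposition \ref{centroPBW2v}). There your induction collapses to the single line $yx^{n+1}=q_1^{n}x^{n}(q_1xy+q_4)+q_4\bigl(\sum_{i=0}^{n-1}q_1^{i}\bigr)x^{n}$.
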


In the particular case $q_2 = q_3 = 0$, the above identities reduce to
\[
yx^{n} = q_{1}^{\,n} x^{n} y + q_{4}\!\left(\sum_{i=0}^{n-1} q_{1}^{\,i}\right) x^{n-1},
\qquad
y^{n}x = q_{1}^{\,n} x y^{n} + q_{4}\!\left(\sum_{i=0}^{n-1} q_{1}^{\,i}\right) y^{n-1}.
\]

\begin{proposition}\label{centroPBW2v}
Let $A = \sigma(\K)\langle x,y\rangle$ be a skew PBW extension over $\K$ subject 
to the relation
\[
yx = q_{1}xy + q_{4}, \qquad q_{1} \in \K^{*} \quad (q_{2}=q_{3}=0).
\]
If $q_{1}$ is an $n$-th root of unity with $n \ge 2$, then $x^{n}$ and $y^{n}$ 
are central elements of $A$. In particular, $\K[x^{n}, y^{n}] \subseteq Z(A)$.
\end{proposition}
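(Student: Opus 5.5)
Since $A$ is generated as a $\K$-algebra by $x$ and $y$, and $\K$ is central because the extension is over a field with $\sigma_i=\mathsf{id}$ and $\delta_i=0$, it suffices to show that $x^n$ commutes with $y$ and that $y^n$ commutes with $x$. Trivially $x^n$ commutes with $x$ and $y^n$ commutes with $y$. Then $x^n$ and $y^n$ are central, and so is the subalgebra $\K[x^n,y^n]$ they generate.

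The main tool is the specialization of Lemma \ref{centropbw2v} to $q_2=q_3=0$, recorded just after that lemma:
\[
yx^{n} = q_{1}^{\,n} x^{n} y + q_{4}\Bigl(\sum_{i=0}^{n-1} q_{1}^{\,i}\Bigr) x^{n-1},
\qquad
y^{n}x = q_{1}^{\,n} x y^{n} + q_{4}\Bigl(\sum_{i=0}^{n-1} q_{1}^{\,i}\Bigr) y^{n-1}.
\]
To be self-contained, I would also verify these directly. Induction on $m$ gives
\[
yx^{m}=q_1^m x^m y+q_4(1+q_1+\cdots+q_1^{m-1})x^{m-1}.
\]
For the inductive step, multiply $yx^{m}$ on the right by $x$ and use $yx=q_1xy+q_4$ once more. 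The formula for $y^m x$ follows symmetrically, multiplying on the left by $y$.

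Now I use that $q_1$ is an $n$-th root of unity, so $q_1^n=1$. The delicate point is the coefficient $\sum_{i=0}^{n-1}q_1^i$. It equals $(q_1^n-1)/(q_1-1)=0$ only when $q_1\neq 1$. So I interpret the hypothesis as $q_1$ being a primitive $n$-th root of unity, or at least $q_1\neq1$.

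If $q_1=1$, the sum equals $n$, which is nonzero in characteristic zero. In that case the statement genuinely requires $q_4=0$: for example, the Weyl algebra with $q_4\ne0$ has trivial center. So I would state explicitly that we assume $q_1\neq 1$, which holds in particular when $q_1$ is primitive with $n\ge2$.

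Under this assumption the coefficient vanishes, and we get $yx^n=x^ny$ and $y^nx=xy^n$. This concludes the argument.

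The main obstacle is therefore only this case distinction on whether $1+q_1+\cdots+q_1^{n-1}$ vanishes. The remaining steps are routine inductions.
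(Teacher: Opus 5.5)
Your proof is correct and follows the paper's argument: you specialize the identities of Lemma~\ref{centropbw2v} to $q_2=q_3=0$ and use $q_1^n=1$ to kill the coefficient $\sum_{i=0}^{n-1}q_1^i$, with your direct induction as a self-contained substitute for citing the lemma. Your caveat is also justified: the paper evaluates that sum as $(1-q_1^n)/(1-q_1)$, which tacitly requires $q_1\neq 1$, and for $q_1=1$, $q_4\neq 0$ in characteristic zero one gets $yx^n-x^ny=nq_4x^{n-1}\neq 0$, so the statement must be read with $q_1\neq 1$ (e.g.\ $q_1$ primitive, as in Theorem~\ref{PBW2v}).
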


\begin{proof}
Since $q_{2} = q_{3} = 0$, Lemma~\ref{centropbw2v} yields the identities
\[
yx^{n} = q_{1}^{\,n}x^{n}y + q_{4}\!\left(\sum_{i=0}^{n-1}q_{1}^{\,i}\right)x^{n-1},
\qquad
y^{n}x = q_{1}^{\,n}xy^{n} + q_{4}\!\left(\sum_{i=0}^{n-1}q_{1}^{\,i}\right)y^{n-1}.
\]
Since $q_{1}^{n} = 1$ and $n \ge 2$, we have
\[
\sum_{i=0}^{n-1}q_{1}^{\,i} = \frac{1-q_{1}^{n}}{1-q_{1}} = 0.
\]
Therefore $yx^{n} = x^{n}y$ and $y^{n}x = xy^{n}$, so that $x^{n}$ and $y^{n}$ 
commute with both generators $x$ and $y$ and are hence central elements of $A$.
\end{proof}

As was noted before, the quasi-commutative extension $A^{\sigma}$ constructed in
Proposition~\ref{prop29} inherits the scalar parameters $c_{i,r}$
and $c_{i,j}$ that define~$A$; in particular, when
$A=\sigma(\K)\langle x,y\rangle$ is defined by the relation
$yx=q_1 xy+q_4$, the associated extension $A^{\sigma}$ is determined
solely by the constant $q_1$. This makes it possible to characterize
the PI property of $A$ entirely in terms of this single parameter,
as the following result shows.

\begin{theorem}\label{PBW2v}
Let \(A=\sigma(\K)\langle x,y\rangle\) be a skew PBW extension
over~\(\K\) subject to the relation
\[
  yx = q_{1}xy + q_{4},\qquad q_{1}\in \K^{*}.
\]
Then $A$ is a PI-algebra if and only if $q_1$ is a primitive $n$-th
root of unity for some $n\ge 2$.
\end{theorem}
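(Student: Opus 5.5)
The plan is to prove the two implications separately. Sufficiency will use the central elements from Proposition~\ref{centroPBW2v}, as in the converse part of Theorem~\ref{PI-semi-comm}. Necessity will pass to the associated graded algebra, which is a quantum plane, together with a separate treatment of $q_1=1$. One degenerate case must be flagged first. If $q_1=1$ and $q_4=0$, then $A=\K[x,y]$ is commutative and hence PI, although $1$ is not a primitive $n$-th root of unity for any $n\ge 2$. I will therefore read the statement with this commutative case excluded (equivalently, assume $q_4\neq 0$ whenever $q_1=1$), and I would add this hypothesis explicitly.

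\emph{Sufficiency.} Assume $q_1$ is a primitive $n$-th root of unity with $n\ge 2$. By Proposition~\ref{centroPBW2v}, $C:=\K[x^n,y^n]$ is a commutative subalgebra contained in $Z(A)$. The standard monomials $x^ay^b$ form a $\K$-basis of $A$. Writing $a=a_0+nt$ and $b=b_0+ns$ with $0\le a_0,b_0<n$, we get $x^ay^b=(x^n)^t(y^n)^s x^{a_0}y^{b_0}$. Hence $A$ is generated as a $C$-module by the $n^2$ monomials $x^{a_0}y^{b_0}$ with $0\le a_0,b_0<n$. By \cite[\S 13.1.13, Corollary]{mcconnell}, $A$ is a PI-algebra.

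\emph{Necessity, reduction to the quantum plane.} Assume $A$ is PI. By \cite[Proposition 13.1.9]{mcconnell}, $A$ satisfies a monic multilinear identity $f$. Filter $A$ by total degree in $x,y$; this is a valid filtration because the defining relation $yx-q_1xy=q_4$ lowers degree. By the PBW basis, $\operatorname{gr}A\cong \K\langle X,Y\rangle/(YX-q_1XY)=\mathcal{O}_{q_1}(\K^2)$.

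The key step is that a multilinear identity of $A$ descends to $\operatorname{gr}A$. For homogeneous elements $\bar a_i$ of degrees $d_i$, lift them to $a_i\in A$. Then $f(a_1,\dots,a_m)$ has degree-$\sum d_i$ component equal to $f(\bar a_1,\dots,\bar a_m)$ computed in $\operatorname{gr}A$. Since $f(a_1,\dots,a_m)=0$, this component vanishes. By multilinearity, $f$ then vanishes on all of $\operatorname{gr}A$. Hence $\mathcal{O}_{q_1}(\K^2)$ is PI, and by \cite[Section~7.1]{DeConciniProcesi1993} $q_1$ is a root of unity, say of exact order $n$. If $n\ge 2$ we are done.

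\emph{Necessity, the case $q_1=1$.} It remains to rule out $q_1=1$ with $q_4\neq 0$. Rescaling $y$ gives $yx-xy=1$, so $A$ is the first Weyl algebra $A_1(\K)$. Since $\operatorname{char}\K=0$, $A_1(\K)$ is simple with center $\K$ and is infinite dimensional over $\K$. If it were PI, Kaplansky's theorem (a primitive PI ring is finite dimensional over its center), or Posner's Theorem~\ref{posner} applied to the prime ring $A_1(\K)$ with center $\K$, would force $\dim_\K A_1(\K)<\infty$, a contradiction. This excluded case is the main point where care is needed, together with checking that the filtration argument really identifies $\operatorname{gr}A$ with the quantum plane. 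Both steps are standard once the degenerate commutative case has been set aside.
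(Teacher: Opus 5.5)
Your proof is correct, and your caveat about the degenerate case is justified. For $q_1=1$, $q_4=0$ one gets $A=\K[x,y]$, which is PI, so the ``only if'' direction fails as literally stated. The correct criterion is that $A$ is PI if and only if either $q_1$ is a primitive $n$-th root of unity with $n\ge 2$, or $q_1=1$ and $q_4=0$.

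The paper's proof misses this. Its converse only assumes ``$q_1$ is not a root of unity'', so it never addresses $q_1=1$: neither the commutative case nor the Weyl algebra case $q_4\neq 0$.

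Your sufficiency argument coincides with the paper's. For necessity the routes differ.
\begin{itemize}
\item The paper splits on $q_4$. For $q_4\neq 0$ it imports the computation $Z(A)=\K$ from \cite[Theorem 2.2-(i)]{LezamaVenegas2020} and combines it with Posner's Theorem, i.e.\ the trivial-center obstruction of Theorem~\ref{PIprima2}. For $q_4=0$ it invokes Theorem~\ref{PI-semi-comm}.
\item You pass uniformly to the total-degree associated graded ring $\mathcal{O}_{q_1}(\K^2)$ and descend a multilinear identity by hand. This is the mechanism of the paper's later Theorem~\ref{necesariaPI}, where it is done by citing Lemma~\ref{graduadoasociado}.
\end{itemize}

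Your route needs no center computation and treats $q_4=0$ and $q_4\neq 0$ simultaneously. The only place you need a center or simplicity argument is the classical case $q_1=1$, $q_4\neq 0$, which is exactly the case the paper omits. There your use of $\operatorname{char}\K=0$ is essential, since in characteristic $p$ the Weyl algebra is PI. The paper's route illustrates its main theme that a trivial center obstructs the PI property, but at the cost of relying on an external description of $Z(A)$.
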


\begin{proof}
Suppose first that $q_1$ is an $n$-th root of unity with $n\ge 2$.
By Proposition~\ref{centroPBW2v}, one has that $x^n,y^n\in Z(A)$, so that
$Z_0:=\K[x^n,y^n]\subseteq Z(A)$. Consequently, the extension $A$ is generated as
a $Z_0$-module by the finite set $\{x^i y^j\mid 0\le i,j<n\}$,
and hence $A$ satisfies the PI property.

\noindent
Conversely, suppose that $A$ is a PI-algebra and that $q_1$ is not
a root of unity.
\begin{enumerate}[(a)]
  \item If $q_4\neq 0$, note that $A$ may be realized as the
    Ore extension
    \[
      A = \K[x][y;\,\sigma,\,\delta],\qquad
      \sigma(x)=q_1 x,\quad \delta(x)=q_4,
    \]
    where $\sigma$ is an automorphism of $\K[x]$ and $\delta$ is a
    $\sigma$-derivation. Hence $A$ is a prime domain whose center
    satisfies $Z(A)=\K$ by \cite[Theorem 2.2-(i)]{LezamaVenegas2020}.
    Since $A$ is a prime PI-domain, Posner's Theorem
   (see Theorem \ref{posner}) implies that $Q:=AS^{-1}$, with $S=\K^*$,
    is a finite-dimensional central simple algebra over
    $\mathbb{F}:=Z(A)S^{-1}=\K$; in particular, $\mathsf{dim}_{\K}(A)$ is finite. This
    contradicts the fact that $A$ admits the infinite $\K$-basis
    $\{x^i y^j\mid i,j\ge 0\}$.
\item If $q_4=0$, then $A=\sigma(\K)\langle x,y\rangle$ is
    a semicommutative skew PBW extension over $\K$, and
    Theorem~\ref{PI-semi-comm} forces $q_1$ to be a root of unity,
    contradicting the hypothesis.
\end{enumerate}

In both cases a contradiction is reached, completing the proof of
the converse.
\end{proof}

\begin{example}\label{weylcaso1}
Let $\K$ be a field and $q\in \K^*$. The \textit{quantum $q$-Weyl algebra}
$A^{q}_{1}(\K)$ is the $\K$-algebra
\[
  A^{q}_{1}(\K) := \K\langle x,y\rangle\big/\langle\,yx - q\,xy - 1\,\rangle.
\]
Setting $q_1=q$ and $q_4=1$, the algebra $A^{q}_{1}(\K)$ is a skew PBW
extension over $\K$ of the form $\sigma(\K)\langle x,y\rangle$.
By Theorem~\ref{PBW2v}, one has that $A^{q}_{1}(\K)$ satisfies the PI property if
and only if $q$ is a root of unity.
\end{example}

\begin{remark}
The algebra $A^{q}_{1}(\K)$ may be regarded as a special case of the
\emph{multiparameter quantum Weyl algebra} $A_{n}^{\mathbf{q},\Lambda}$.
The PI property for this family was studied in Proposition~2 of
\cite{BeraMukherjee2023}; however, the argument used for the converse
implication in that proposition relies on the algebra being a finite
module over its centre, a hypothesis that is not available a priori
for $A^{q}_{1}(\K)$ when $q$ is not a root of unity.
Theorem~\ref{PBW2v} addresses this gap by providing a direct proof
that does not depend on this assumption.
\end{remark}

\begin{example}
Let $\K$ be a field, $q\in \K^*$, and $h\in \K$. The
\textit{algebra of $q$-differential operators} $D_{q,h}[x,y]$ is the
$\K$-algebra generated by $x$ and $y$ subject to the relation
\[
  yx = q\,xy + h.
\]
Setting $q_1=q$ and $q_4=h$, the algebra $D_{q,h}[x,y]$ is a skew PBW extension over $\K$ of the form $\sigma(\K)\langle x,y\rangle$.
By Theorem \ref{PBW2v}, the algebra $D_{q,h}[x,y]$ satisfies the PI property if and only if $q$ is a root of unity.
\end{example}
The following result establishes the transfer of primeness from the base ring to 
the bijective skew PBW extension.

\begin{lemma}[{\cite[Corollary 4.2]{LezamaAcostaReyes2015}}]\label{PrimoPBW}
If $R$ is a prime ring and $A$ is a bijective skew PBW extension of $R$, then  $A$ is prime and $\mathsf{rad}(A) = 0$.
\end{lemma}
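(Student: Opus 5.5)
The plan is to pass to the associated graded ring, where the structure is an iterated skew polynomial ring over $R$, and then lift the conclusions back to $A$.

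\emph{Setting up the graded ring.} Filter $A$ by total degree in $x_1,\dots,x_n$, taking ${\rm Mon}(A)$ as a left $R$-basis. By conditions (iii) and (iv) of Definition \ref{pbwt}, the associated graded ring $\mathrm{Gr}(A)$ is the quasi-commutative extension $A^{\sigma}$ of Proposition \ref{prop29}. Since $A$ is bijective, each $\sigma_i$ is an automorphism and each $c_{i,j}$ is a unit. I would then check that $A^{\sigma}$ is an iterated skew polynomial ring
\[
R[z_1;\theta_1][z_2;\theta_2]\cdots[z_n;\theta_n]
\]
with automorphisms $\theta_k$ (no derivations). Here $\theta_k$ acts as $\sigma_k$ on $R$ and as $z_i\mapsto c_{i,k}z_i$ on the earlier variables; bijectivity is exactly what makes each $\theta_k$ bijective.

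\emph{Primeness of $\mathrm{Gr}(A)$.} If $S$ is prime and $\theta$ is an automorphism, then $S[z;\theta]$ is prime. To see this, take nonzero $f,g$ with leading coefficients $a,b$. Primeness of $S$ gives $s\in S$ with $a\,\theta^{\deg f}(s)\,\theta^{\deg f}(b)\neq 0$; this uses surjectivity of $\theta$ to pull back the element found in $S$. Then $f s g$ has nonzero leading coefficient. By induction on $n$, $A^{\sigma}$ is prime.

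\emph{Lifting primeness to $A$.} Given nonzero $a,b\in A$, let $\bar a,\bar b$ be their principal symbols. Because $\mathrm{Gr}(A)$ is graded prime, there is a homogeneous $\bar c$ with $\bar a\bar c\bar b\neq 0$; one simply takes a homogeneous component of a witness. Lifting $\bar c$ to $c\in A$ of the same degree, the principal symbol of $acb$ is $\bar a\bar c\bar b\neq0$, so $acb\neq 0$ and $A$ is prime.

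\emph{Vanishing of the Jacobson radical.} I expect this step to be the main obstacle, since $A$ need not be a domain and units cannot be controlled by leading terms alone. I would argue by contradiction, supposing $0\neq a\in\mathsf{rad}(A)$. Then $1-a\,r\,x_n^{m}$ is invertible for every $r\in R$ and every $m$. I would choose $m$ larger than the degree of the inverse's putative top part and choose $r$, using primeness and the automorphisms as above, so that the top-degree symbol of $a r x_n^m$ is a non-zero-divisor-like element on the relevant side. A comparison of top-degree components in $(1-arx_n^m)v=1$ should then force $v$ to have unbounded degree, which is a contradiction.

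If this direct argument stalls, the fallback is to reduce to known results. I would transfer the radical question to $A^{\sigma}$, where the Amitsur-type theorem for skew polynomial rings with automorphisms gives $J(S[z;\theta])=N[z]$ with $N$ a nil ideal of $S$, and $N=0$ once $S$ is prime (by induction, using the prime semiprime-radical-free base). I would then lift this back to $A$ using the graded–filtered comparison together with the fact that a prime ring has no nonzero nil ideals arising from $R$. This is essentially the route of \cite{LezamaAcostaReyes2015}.
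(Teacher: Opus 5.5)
\textbf{Primeness.} Your argument is correct and follows the same filtered--graded route as the cited source; the paper itself only cites the result. $\mathsf{Gr}(A)$ is a bijective quasi-commutative extension, so it is an iterated skew polynomial ring $R[z_1;\theta_1]\cdots[z_n;\theta_n]$. Each $\theta_k$ is an automorphism: it is onto because $\sigma_k$ is onto and the $c_{i,k}$ are units. Your leading-coefficient argument carries primeness up the tower, and a homogeneous witness lifts to show $A$ is prime.

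\textbf{The radical step has a genuine gap.} You read $\mathsf{rad}(A)$ as the Jacobson radical, and in that reading the statement is false. The step ``$N=0$ once $S$ is prime'' in your fallback is exactly where it breaks: primeness excludes nonzero \emph{nilpotent} ideals, not nonzero nil, or even locally nilpotent, ones.

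Here is a counterexample. Let $N$ be the monomial algebra over a field $F$ on letters $x_1,x_2,\dots$, in which a word is zero precisely when, for some $j$, it contains a contiguous block of length $>j$ all of whose letters have index $\le j$.
\begin{itemize}
\item $N$ is locally nilpotent, since words of length $>m$ in $x_1,\dots,x_m$ vanish.
\item $N$ is prime. For nonzero words $u,v$ and $M$ large, $u\,x_M\,v\neq 0$, and distinct pairs $(u,v)$ give distinct words; hence $aNb\neq 0$ for all nonzero $a,b\in N$.
\item $R=F\oplus N$ is a prime ring with $1$. A nonzero ideal $I$ with $I\cap N=0$ would satisfy $IN=0$, which is impossible because $N\neq 0$ is nil. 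So every nonzero ideal of $R$ meets $N$, and primeness of $R$ follows from primeness of $N$.
\end{itemize}
Now $A=R[x]$ is a bijective skew PBW extension of $R$, with $\sigma_1=\mathrm{id}$ and $\delta_1=0$. Its ideal $N[x]$ is nonzero and locally nilpotent, so $J(A)\supseteq N[x]\neq 0$.

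This also shows why your direct argument cannot be completed. For $0\neq a\in N$, every $ar$ is nilpotent, so no choice of $r$ makes the top symbol of $arx^m$ regular. Moreover, $1-arx^m$ has the bounded-degree inverse $\sum_k (ar)^k x^{mk}$.

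\textbf{The fix.} The lemma holds with $\mathsf{rad}$ read as the prime radical, the intersection of all prime ideals, which is the sense intended in the source. In that reading, $\mathsf{rad}(A)=0$ follows in one line from the primeness you already proved, since $0$ is then a prime ideal of $A$. The entire second half of your proposal should be replaced by this observation; no Amitsur-type theorem is needed.
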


Recall that every skew PBW extension 
$A = \sigma(R)\langle x_1,\dots,x_n\rangle$ is a free left module over 
$R$ with basis the standard monomials
\[
\operatorname{Mon}(A) = \bigl\{\, x^{\alpha} = x_1^{\alpha_1} \cdots x_n^{\alpha_n}
\;\bigm|\; \alpha \in \mathbb{N}^n \,\bigr\}.
\]
In particular, when $n \ge 1$, the $R$-linear rank of $A$ is infinite and hence 
$A$ cannot be finitely generated as a module over $R$. This observation is 
central to the proof of the following result.

\begin{theorem}\label{PIprima2}
Let $R$ be a prime $\K$-algebra and let 
$A = \sigma(R)\langle x_1,\dots,x_n\rangle$ be a bijective skew 
PBW extension of $R$, with $n \ge 1$. If $A$ satisfies the PI property, then the center of $A$ 
is non-trivial.
\end{theorem}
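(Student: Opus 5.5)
We argue by contradiction, following the pattern of case (a) in the proof of Theorem \ref{PBW2v}. Here ``non-trivial'' is read as $Z(A)\neq\K$. Assume $A$ satisfies the PI property but $Z(A)=\K$.

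The first step is primeness. Since $R$ is prime and $A$ is bijective, Lemma \ref{PrimoPBW} shows that $A$ is a prime ring. Because $A$ is also PI, Posner's Theorem (Theorem \ref{posner}) applies to $A$. With $Z=Z(A)=\K$ we get $S=\K^{*}$. Every element of $S$ is already a unit of $A$, so $Q=AS^{-1}=A$, and the field of fractions of the center is $\mathbb{F}=\K$. Posner's Theorem then says that $A$ is a central simple algebra over $\K$ with $\dim_{\K}A=(d/2)^2<\infty$.

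The second step is to contradict finiteness. Since $n\ge 1$, the powers $x_1^{m}$ with $m\ge 0$ are distinct elements of $\operatorname{Mon}(A)$. Hence they are left $R$-linearly independent, and so $A$ contains the infinite direct sum $\bigoplus_{m\ge0}Rx_1^m$. We must check that this forces $\dim_{\K}A=\infty$. We need $R\neq 0$, which holds because $R$ is prime, and we need $\K\subseteq R$ acting compatibly, which holds because $R$ is a $\K$-algebra. Pick any nonzero scalar, for instance $1\in R$. The elements $x_1^m$ are then $\K$-linearly independent: a relation $\sum_m \lambda_m x_1^m=0$ with $\lambda_m\in\K\subseteq R$ is an $R$-linear relation among the basis monomials, so every $\lambda_m$ vanishes. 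Thus $\dim_{\K}A=\infty$, which contradicts the previous step. Therefore $Z(A)\neq\K$.

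The main subtle point is the identification of $\K$ with the center inside Posner's Theorem. We must make sure that the structural map $\K\to A$ lands in $Z(A)$, so that the hypothesis $Z(A)=\K$ is meaningful. This is guaranteed if the extension is regarded as a $\K$-algebra, that is, if the $\sigma_i$ are $\K$-linear and the $\delta_i$ vanish on $\K$. I would state this implicitly as part of the hypotheses. We must also make sure that a PI-algebra over $\K$ is a PI-ring in the sense required by Posner's Theorem. For this we can use a multilinear monic identity (\cite[Proposition 13.1.9]{mcconnell}), as in the remarks after Theorem \ref{PI-semi-comm}. Once these points are settled, the contradiction with the infinite basis $\operatorname{Mon}(A)$ is immediate.
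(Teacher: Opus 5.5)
Your proposal is correct and is essentially the paper's proof: primeness from Lemma~\ref{PrimoPBW}, then Posner's Theorem with $S=\K^{*}$, giving $Q=A$, $\mathbb{F}=\K$ and $\dim_{\K}A=(d/2)^2<\infty$, which contradicts the infinite basis $\operatorname{Mon}(A)$. Your explicit check that the powers $x_1^m$ are $\K$-linearly independent fills in a step the paper leaves implicit. In your side remark, multilinearizing does not make the coefficients integral; however, a multilinear identity over the field $\K$ with a coefficient equal to $1$ is already what the algebra version of Posner's theorem requires, so the argument is unaffected.
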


\begin{proof}
We proceed by contradiction. Suppose that $Z(A) = \K$. Since $R$ is a prime 
$\K$-algebra, Lemma~\ref{PrimoPBW} implies that $A$ is a prime ring. Applying 
Posner's Theorem (Theorem~\ref{posner}) to $A$ with $S = \K^*$, we obtain 
$Q = AS^{-1} = A$ and $\mathbb{F} = Z(A)S^{-1} = \K$. Consequently, the extension $A$ is a central 
simple $\K$-algebra and, if $d$ is its minimal degree, then
\[
\mathsf{dim}_{\K}(A) = \left(\frac{d}{2}\right)^2 < \infty.
\]
This contradicts the fact that $A$ has infinite $R$-linear rank when $n \ge 1$.
\end{proof}

\begin{corollary}\label{PIprima3}
Let $R$ be a prime $\K$-algebra and let 
$A = \sigma(R)\langle x_1,\dots,x_n\rangle$, with $n \ge 1$, be a bijective skew 
PBW extension of $R$. If $Z(A) = \K$, then $A$ does not satisfy the PI property.
\end{corollary}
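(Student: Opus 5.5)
This statement is the contrapositive of Theorem~\ref{PIprima2}, and I would prove it directly that way. Assume $Z(A)=\K$ and, for a contradiction, that $A$ satisfies the PI property. The hypotheses of Theorem~\ref{PIprima2} are then all in force: $R$ is a prime $\K$-algebra, $A$ is a bijective skew PBW extension of $R$, $n\ge 1$, and $A$ is PI. Hence $Z(A)\neq\K$, which contradicts the assumption.

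For completeness, I would recall the mechanism behind Theorem~\ref{PIprima2}, so the corollary can be checked without unwinding that proof. There are four steps.
\begin{enumerate}
\item By Lemma~\ref{PrimoPBW}, $A$ is prime.
\item With $Z(A)=\K$, every nonzero central element is already invertible, so the localization in Posner's Theorem (Theorem~\ref{posner}) is trivial: $Q=A$ and $\mathbb{F}=\K$.
\item Theorem~\ref{posner} then gives $\mathsf{dim}_{\K}(A)=(d/2)^2<\infty$.
\item This is impossible, because $\operatorname{Mon}(A)$ is an infinite left $R$-basis of $A$ when $n\ge1$, and $R$ contains $\K$. So $A$ has infinite $\K$-dimension, since the monomials $x_1^m$, $m\ge 0$, are $\K$-linearly independent.
\end{enumerate}

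The only point needing care is the interpretation of ``$Z(A)$ nontrivial'' in Theorem~\ref{PIprima2}. It must mean exactly $Z(A)\neq\K$, where $\K$ is identified with its image $\K\cdot 1\subseteq R\subseteq A$. Both the theorem and the corollary use this identification, so the contrapositive applies verbatim. I expect no real obstacle beyond this bookkeeping.
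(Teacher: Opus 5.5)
Your proposal is correct and matches the paper, which proves this corollary as the immediate contrapositive of Theorem~\ref{PIprima2}. Your four-step recap (primeness via Lemma~\ref{PrimoPBW}, Posner's Theorem with trivial localization giving $\mathsf{dim}_{\K}(A)<\infty$, and the contradiction with the infinite basis $\operatorname{Mon}(A)$) is the same argument the paper gives for Theorem~\ref{PIprima2}.
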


\begin{proof}
This is the immediate contrapositive of Theorem~\ref{PIprima2}.
\end{proof}
The natural filtration of a skew PBW extension, first introduced in the proof 
of Theorem~2.2 of \cite{GallegoLezama} and subsequently taken up in 
\cite{LezamaReyes2014}, plays a fundamental role in the study of the PI 
property. In particular, it allows us to reduce the analysis of a general skew 
PBW extension to the quasi-commutative case, which is more tractable. The 
following two results formalize this reduction.

\begin{proposition}[{\cite[Theorem 2.2]{LezamaReyes2014}}]\label{filtracion}
Let $A = \sigma(R)\langle x_1,\dots,x_n\rangle$ be a skew PBW extension. For 
each $m \in \mathbb{Z}$, define
\[
F_m :=
\begin{cases}
0, & \text{if } m \leq -1,\\[1mm]
R, & \text{if } m = 0,\\[1mm]
\{\, f \in A \mid \deg(f) \leq m \,\}, & \text{if } m \geq 1.
\end{cases}
\]
Then $\{F_m\}_{m \in \mathbb{Z}}$ is a filtration of $A$ and the associated 
graded ring
\[
\mathsf{Gr}(A) := \bigoplus_{m \geq 0} F_m/F_{m-1}
\]
is a quasi-commutative skew PBW extension of $R$. In particular, if $A$ is 
bijective, then $\mathsf{Gr}(A)$ is bijective as well.
\end{proposition}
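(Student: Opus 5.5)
The plan follows the construction in the proof of \cite[Theorem 2.2]{GallegoLezama}: first check that $\{F_m\}$ is a filtration, then identify $\mathsf{Gr}(A)$ with a quasi-commutative extension $\sigma(R)\langle z_1,\dots,z_n\rangle$, where $z_i$ is the class of $x_i$ in $F_1/F_0$. Here $\deg(f)$ is the total degree of $f$ written in the left $R$-basis $\operatorname{Mon}(A)$, with $\deg(0)=-\infty$.

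\textbf{Step 1: $\{F_m\}$ is a filtration.} The conditions $F_m\subseteq F_{m+1}$ and $\bigcup_m F_m=A$ are immediate from Definition \ref{pbwt}(ii). Each $F_m$ is an additive subgroup and also a left $R$-submodule, since $F_m=\bigoplus_{|\alpha|\le m}Rx^\alpha$. The real content is $F_mF_k\subseteq F_{m+k}$. It suffices to show that for monomials $x^\alpha,x^\beta$ and $r\in R$,
\[
x^\alpha r x^\beta = \sigma^{\alpha}(r)\,c_{\alpha,\beta}\,x^{\alpha+\beta} + p_{\alpha,\beta,r}, \qquad \deg(p_{\alpha,\beta,r})<|\alpha|+|\beta|,
\]
where $\sigma^\alpha=\sigma_1^{\alpha_1}\cdots\sigma_n^{\alpha_n}$ and $c_{\alpha,\beta}\in R$ is built from the $\sigma$'s applied to products of the $c_{i,j}$.

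\textbf{Step 2: proving the product formula.} I would argue by induction on $|\alpha|+|\beta|$, with an inner induction that moves one variable at a time. Proposition \ref{prop23} gives $x_ir=\sigma_i(r)x_i+\delta_i(r)$, so passing $x_i$ past a scalar costs only a lower-degree term. Relation \eqref{sigmadefinicion2} gives $x_jx_i=c_{i,j}x_ix_j+(\text{degree}\le1)$, so reordering two adjacent variables changes the leading term by a unit-free scalar and adds terms of strictly lower degree. Each lower-degree remainder is again a sum of products of fewer variables and scalars, so the induction hypothesis puts it in the right $F$. This bookkeeping, in particular that the leading coefficient is well defined and the error stays in $F_{|\alpha|+|\beta|-1}$, is the main technical step, essentially Theorem 2.2 of \cite{GallegoLezama}.

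\textbf{Step 3: the associated graded ring.} Write $z_i:=\overline{x_i}\in F_1/F_0$. Since $F_m/F_{m-1}\cong\bigoplus_{|\alpha|=m}R\,\overline{x^\alpha}$ as left $R$-modules, $\mathsf{Gr}(A)$ is left $R$-free with basis the monomials $z^\alpha$, and $R=F_0$ is a subring. Reducing the relations modulo lower degree gives:
\begin{itemize}
\item $z_ir=\sigma_i(r)z_i$, because $\delta_i(r)\in F_0$; so condition (iiia) holds with $c_{i,r}=\sigma_i(r)$, which is nonzero since $\sigma_i$ is injective.
\item $z_jz_i=c_{i,j}z_iz_j$, because the remainder lies in $F_1$; so condition (iva) holds.
\end{itemize}
Hence $\mathsf{Gr}(A)$ is a quasi-commutative skew PBW extension of $R$.

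\textbf{Step 4: bijectivity.} The endomorphism attached to $z_i$ is exactly $\sigma_i$, and the structure constants are exactly the $c_{i,j}$. So if $A$ is bijective, the $\sigma_i$ are bijective and the $c_{i,j}$ are invertible, and $\mathsf{Gr}(A)$ is bijective as well.
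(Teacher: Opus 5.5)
Your proposal is correct and matches the standard proof behind the citation. The paper gives no argument of its own and simply defers to \cite{LezamaReyes2014}, where the filtration comes from the Gallego--Lezama product formula $x^\alpha x^\beta=c_{\alpha,\beta}x^{\alpha+\beta}+p_{\alpha,\beta}$; the relations $z_ir=\sigma_i(r)z_i$ and $z_jz_i=c_{i,j}z_iz_j$ are then read off in $\mathsf{Gr}(A)$, with $\sigma_i$ and $c_{i,j}$ unchanged, exactly as you do. Keeping the full leading-term formula as the inductive hypothesis, rather than only a degree bound, is what makes your reordering induction in Step 2 close.
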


\begin{lemma}[{\cite[Proposition 1.2]{LeroyMatczuk2007}}]\label{graduadoasociado}
Let $A = \bigcup_{i \geq 0} A_i$ be a filtered ring with associated graded ring
\[
\mathsf{gr}(A) := \bigoplus_{i \geq 0} A_i/A_{i-1}.
\]
If $A$ satisfies a polynomial identity, then $\mathsf{gr}(A)$ satisfies a 
polynomial identity as well.
\end{lemma}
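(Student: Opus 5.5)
The plan is to show that $\mathsf{gr}(A)$ satisfies the \emph{same} polynomial identity as $A$, after first putting that identity in a convenient form. By \cite[Proposition 13.1.9]{mcconnell}, since $A$ is a PI-ring we may take a monic \emph{multilinear} identity
\[
f(x_1,\dots,x_d)=\sum_{\pi\in S_d}\alpha_\pi\, x_{\pi(1)}x_{\pi(2)}\cdots x_{\pi(d)},\qquad \alpha_\pi\in\mathbb{Z},
\]
with some $\alpha_\pi=1$. Because $f$ is multilinear and $\mathsf{gr}(A)$ is spanned additively by homogeneous elements, it suffices to show that $f(\bar a_1,\dots,\bar a_d)=0$ whenever each $\bar a_k$ is homogeneous. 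The general case then follows by expanding each argument into its homogeneous components and using additivity of $f$ in each variable.

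\textbf{Homogeneous case.} Fix homogeneous elements $\bar a_k=a_k+A_{i_k-1}\in A_{i_k}/A_{i_k-1}$ with $a_k\in A_{i_k}$, and put $N:=i_1+\cdots+i_d$. I will use the filtration axioms $A_iA_j\subseteq A_{i+j}$ and $A_{-1}=0$.
\begin{enumerate}
\item[(1)] For each $\pi\in S_d$, the product $a_{\pi(1)}\cdots a_{\pi(d)}$ lies in $A_N$. By the definition of multiplication in $\mathsf{gr}(A)$, its class in $A_N/A_{N-1}$ is exactly $\bar a_{\pi(1)}\cdots\bar a_{\pi(d)}$.
\item[(2)] This holds because the graded product is well defined: changing $a_k$ by an element of $A_{i_k-1}$ changes the product only by an element of $A_{N-1}$.
\item[(3)] Every monomial of $f$ has the same degree $N$. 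So, taking integer linear combinations, the class of $f(a_1,\dots,a_d)\in A_N$ modulo $A_{N-1}$ equals $f(\bar a_1,\dots,\bar a_d)\in A_N/A_{N-1}$.
\item[(4)] Since $f(a_1,\dots,a_d)=0$ in $A$, its class is zero. Hence $f(\bar a_1,\dots,\bar a_d)=0$.
\end{enumerate}

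\textbf{Expected obstacle.} The main point requiring care is the reduction to a multilinear identity. For a non-multilinear $f$, the top homogeneous components do not behave additively, and the argument above breaks down. Fortunately this reduction is already supplied by \cite[Proposition 13.1.9]{mcconnell}, and it preserves monicity, so $\mathsf{gr}(A)$ satisfies a monic identity. It only remains to verify that the cited filtration conventions are the ones in force, namely that the filtration is exhaustive, multiplicative, and starts at $A_{-1}=0$. In our application these hold by Proposition \ref{filtracion}.
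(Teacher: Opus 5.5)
Your proof is correct. It passes to a monic multilinear identity, as the paper itself does via \cite[Proposition 13.1.9]{mcconnell}, evaluates it on homogeneous representatives, and uses $A_iA_j\subseteq A_{i+j}$ to identify $f(\bar a_1,\dots,\bar a_d)$ with the class of $f(a_1,\dots,a_d)$ in $A_N/A_{N-1}$; this is the standard argument for this fact. The paper gives no proof of its own here and simply imports the result from \cite{LeroyMatczuk2007}, so there is no separate argument in the paper to compare against.
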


Combining the two results above yields necessary conditions for a skew PBW 
extension over a field to satisfy the PI property.

\begin{theorem}\label{necesariaPI}
Let $A = \sigma(\K)\langle x_1,\dots,x_n\rangle$ be a skew PBW extension over a 
field $\K$. If $A$ satisfies the PI property, then, for all $1 \leq i < j \leq n$, 
the scalar $c_{i,j} \in \K^{*}$ appearing in the relation
\[
x_j x_i - c_{i,j}\, x_i x_j \in \K + \K x_1 + \cdots + \K x_n
\]
is a root of unity.
\end{theorem}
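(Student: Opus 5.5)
The natural strategy is the one announced just before the statement: pass to the associated graded ring and apply the semicommutative criterion. Assume $A=\sigma(\K)\langle x_1,\dots,x_n\rangle$ satisfies a polynomial identity. Consider the filtration $\{F_m\}_{m\in\mathbb{Z}}$ of Proposition~\ref{filtracion}. By Lemma~\ref{graduadoasociado}, the associated graded ring $\mathsf{Gr}(A)$ is again a PI-ring. By Proposition~\ref{filtracion}, $\mathsf{Gr}(A)$ is a quasi-commutative skew PBW extension of $\K$ in the variables $\overline{x}_1,\dots,\overline{x}_n$, where $\overline{x}_i$ is the class of $x_i$ in $F_1/F_0$.

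The first concrete step is to identify the relations of $\mathsf{Gr}(A)$. For $i<j$ we have $x_jx_i-c_{i,j}x_ix_j\in F_1$. Passing to $F_2/F_1$ therefore gives $\overline{x}_j\,\overline{x}_i=c_{i,j}\,\overline{x}_i\,\overline{x}_j$, with the same scalar $c_{i,j}$. Similarly, every $\lambda\in\K$ satisfies $x_i\lambda=\lambda x_i$, since $\sigma_i$ and $\delta_i$ are trivial on the field of coefficients, as in the proof of the Corollary following Proposition~\ref{prop29}. Hence $\overline{x}_i\lambda=\lambda\overline{x}_i$ in $\mathsf{Gr}(A)$. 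So $\mathsf{Gr}(A)$ is constant as well as quasi-commutative, that is, semicommutative. Its commutation parameters are exactly the $c_{i,j}$ of $A$; in fact $\mathsf{Gr}(A)$ is isomorphic to the multiparameter quantum affine space with parameters $c_{i,j}$, which is also the extension $A^{\sigma}$ of Proposition~\ref{prop29}.

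The final step applies Theorem~\ref{PI-semi-comm} to $\mathsf{Gr}(A)$, a semicommutative skew PBW extension of the integral domain $\K$. Since it is a PI-ring, every $c_{i,j}$ with $i<j$ must be a root of unity.

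The main point needing care is the identification of the parameters of $\mathsf{Gr}(A)$ with those of $A$. One must check that the lower-degree terms, lying in $\K+\K x_1+\cdots+\K x_n\subseteq F_1$, really vanish in $F_2/F_1$. One must also check that $\overline{x}_i\overline{x}_j\neq 0$, which holds because $x_ix_j$ is a standard monomial of degree exactly $2$. With these checks the relation $\overline{x}_j\overline{x}_i=c_{i,j}\overline{x}_i\overline{x}_j$ holds, and no new relation appears that could alter the scalar. A second, minor point is that Lemma~\ref{graduadoasociado} requires the filtration to be exhaustive. This holds because every element of $A$ has finite degree.
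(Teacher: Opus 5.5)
Your proof is correct and is essentially the paper's argument: filter $A$ as in Proposition~\ref{filtracion}, observe that $\mathsf{Gr}(A)$ is PI by Lemma~\ref{graduadoasociado} and semicommutative with the same scalars $c_{i,j}$, and conclude. The only difference is where you finish, and it is cosmetic. You apply Theorem~\ref{PI-semi-comm} to all of $\mathsf{Gr}(A)$, as the paper itself does in Theorem~\ref{necesariaPI2}; here the paper instead passes to the subalgebra generated by $\overline{x}_i,\overline{x}_j$, which is $\mathcal{O}_{c_{i,j}}(\K^2)$, and cites De Concini--Procesi, which is exactly the step inside the proof of Theorem~\ref{PI-semi-comm}. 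Note that, like the paper, your constancy step tacitly uses that $\K$ is central in $A$, i.e.\ that $A$ is a $\K$-algebra.
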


\begin{proof}
By Proposition~\ref{filtracion}, the extension $A$ is a filtered ring whose associated graded 
ring $\mathsf{Gr}(A)$ is a quasi-commutative skew PBW extension of $\K$. 
Since $A$ is defined over the field $\K$, every element of $\K$ is constant in $A$ 
and this property is inherited by $\mathsf{Gr}(A)$; in particular, 
$\mathsf{Gr}(A)$ is constant and hence semicommutative. Moreover, the 
scalars $c_{i,j}$ defined in $A$ are preserved in $\mathsf{Gr}(A)$. Since 
$A$ satisfies the PI property, Lemma~\ref{graduadoasociado} implies that 
$\mathsf{Gr}(A)$ does as well. Fix $1 \le i < j \le n$ and consider the 
subalgebra
\[
B_{ij} := \K\langle \overline{x}_i, \overline{x}_j \rangle \subseteq 
\mathsf{Gr}(A),
\]
which satisfies
\[
B_{ij} \cong \K\langle u,v\rangle/(vu - c_{i,j}uv) = \mathcal{O}_{c_{i,j}}(\K^2).
\]
Since $\mathsf{Gr}(A)$ satisfies the PI property, the subalgebra $B_{ij}$ 
does as well. By \cite[\S 7.1]{DeConciniProcesi1993}, the algebra $\mathcal{O}_{c_{i,j}}(\K^2)$ 
satisfies the PI property if and only if $c_{i,j}$ is a root of unity in $\K$.
\end{proof}

\begin{remark}
The converse of Theorem~\ref{necesariaPI} does not hold in general: a skew PBW 
extension may have trivial center and, even when all parameters $c_{i,j}$ are 
roots of unity, fail to satisfy the PI property.
\end{remark}
\begin{theorem}\label{necesariaPI2}
Let $A = \sigma(R)\langle x_1,\dots,x_n\rangle$ be a constant skew PBW extension 
of an integral domain $R$. If $A$ satisfies the PI property, then, for all 
$1 \leq i < j \leq n$, the scalar $c_{i,j} \in R$ appearing in the relation
\[
x_j x_i - c_{i,j}\, x_i x_j \in R + Rx_1 + \cdots + Rx_n
\]
is a root of unity.
\end{theorem}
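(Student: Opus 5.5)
The plan is to mimic the proof of Theorem~\ref{necesariaPI}, replacing the field $\K$ by the integral domain $R$ and finishing with Theorem~\ref{PI-semi-comm} in place of the quantum-plane criterion.

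First, by Proposition~\ref{filtracion}, $A$ is filtered by total degree, and $\mathsf{Gr}(A)$ is a quasi-commutative skew PBW extension of $R$ in the variables $\overline{x}_1,\dots,\overline{x}_n$. In it $\overline{x}_j\overline{x}_i=c_{i,j}\overline{x}_i\overline{x}_j$ with the same $c_{i,j}$, and $\overline{x}_i r=\sigma_i(r)\overline{x}_i$. Since $A$ is constant, $\sigma_i=\mathrm{id}_R$ and $\delta_i=0$ for all $i$, so $x_ir=rx_i$ in $A$. This gives $\overline{x}_ir=r\overline{x}_i$ in $\mathsf{Gr}(A)$. Hence $\mathsf{Gr}(A)$ is constant and quasi-commutative, i.e.\ semicommutative, over the integral domain $R$.

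Next, since $A$ is PI, Lemma~\ref{graduadoasociado} shows that $\mathsf{Gr}(A)$ is PI. Theorem~\ref{PI-semi-comm} applied to $\mathsf{Gr}(A)$ then says that every $c_{i,j}$, $i<j$, is a root of unity in $R$, which is the claim. Alternatively, one can pass to the subring $R\langle\overline{x}_i,\overline{x}_j\rangle\cong R[x][y;\sigma]$ and repeat the localization argument from that proof.

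The main obstacle is checking that the structure constants really pass unchanged to $\mathsf{Gr}(A)$ as a semicommutative extension. One point is that $x_jx_i-c_{i,j}x_ix_j$ has degree $\le1<2$, so its image vanishes in $F_2/F_1$. Another is that $\mathsf{Gr}(A)$ is free over $R$ on the images of the standard monomials, which Proposition~\ref{filtracion} already provides.

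A second subtlety is the hypothesis on $c_{i,j}$ used in Theorem~\ref{PI-semi-comm}. There $q=c_{i,j}$ is treated as invertible in $R$, so that $\sigma(x)=qx$ is injective and the ring localizes to $\mathcal{O}_q(\K^2)$ over the fraction field. For an integral domain, $c_{i,j}\neq0$ already suffices for injectivity and the localization, so that proof should go through verbatim. If needed, I will note this explicitly rather than assume bijectivity.
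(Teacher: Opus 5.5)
Your proposal is correct and is essentially the paper's proof. Proposition~\ref{filtracion} makes $\mathsf{Gr}(A)$ a quasi-commutative extension of $R$ with the same $c_{i,j}$, and constancy makes it semicommutative. Lemma~\ref{graduadoasociado} then transfers the PI property to $\mathsf{Gr}(A)$, and Theorem~\ref{PI-semi-comm} finishes.

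Your worry about the invertibility of $c_{i,j}$ is harmless. Condition (iv) holds for both $(i,j)$ and $(j,i)$, which gives $c_{i,j}=c_{i,j}c_{j,i}c_{i,j}$, so $c_{i,j}c_{j,i}=1$ over a domain.
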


\begin{proof}
By Proposition \ref{filtracion}, the extension $A$ is a filtered ring whose associated graded 
ring $\mathsf{Gr}(A)$ is a semicommutative skew PBW extension of the 
integral domain $R$, in which the scalars $c_{i,j}$ defined in $A$ are preserved. 
Since $A$ satisfies the PI property, Lemma \ref{graduadoasociado} implies that 
$\mathsf{Gr}(A)$ does as well; hence Theorem \ref{PI-semi-comm} yields that 
each $c_{i,j}$ is a root of unity.
\end{proof}

\begin{example}
Let $A = \sigma(\K)\langle x_1,\dots,x_n\rangle$ be a skew PBW extension over a 
field $\K$ defined by
\[
x_j x_i = q_{ij}\, x_i x_j + a^{(1)}_{ij}\, x_1 + \cdots + a^{(n)}_{ij}\, x_n 
+ d_{ij}, \qquad 1 \le i < j \le n,
\]
where $q_{ij} \in \K^{*}$, $a^{(t)}_{ij}, d_{ij} \in \K$ for $1 \le t \le n$, and
\[
q_{ij} = \frac{l_{ij}}{k_{ij}}, \text{ with }
\mathsf{gcd}(l_{ij}, l_{rs}) = \mathsf{gcd}(k_{ij}, k_{rs}) = \mathsf{gcd}(l_{ij}, k_{rs}) = 1
\]
for all $i,j,r,s \in \{1,2,\dots,n\}$. If $A$ satisfies the PI property, then 
Theorem \ref{necesariaPI} implies that each $q_{ij}$ is a root of unity. This is 
consistent with Theorem 2.5 of \cite{LezamaVenegas2020}, which shows that if some 
$q_{ij}$ is not a root of unity, then $Z(A) = \K$; in that case, since $A$ is a 
bijective skew PBW extension over the field $\K$, Lemma \ref{PrimoPBW} implies 
that $A$ is prime and Theorem~\ref{PIprima2} then yields that its center cannot 
be trivial if $A$ satisfies the PI property.
\end{example}

The results established so far provide necessary conditions for the PI property, 
but are not sufficient in general. However, for skew PBW extensions whose center 
is known, these criteria allow one to determine precisely whether the PI property 
holds. In \cite{LezamaVenegas2020}, Lezama and Venegas collect and study the 
center of various skew PBW extensions; the results below illustrate how this 
information can be exploited. The following theorem, taken from 
\cite{LezamaVenegas2020}, describes a broad family of algebras whose center had 
not been computed in the literature prior to that work.

\begin{theorem}[{\cite[Theorem 2.6]{LezamaVenegas2020}}]\label{PI3}
Let $A = \sigma\!\bigl(\K[x_1,\dots,x_n]\bigr)\langle y_1,\dots,y_n\rangle$ be 
the skew PBW extension defined by the relations 
\begin{align*}
y_i y_j &= y_j y_i, \qquad 1 \le i < j \le n, \\[2pt]
y_j x_i &= x_i y_j, \qquad i \ne j, \\[2pt]
y_i x_i &= q_i\, x_i y_i + d_i\, y_i + a_i, \qquad 1 \le i \le n.
\end{align*}
Then,
\begin{enumerate}[\rm (i)]
\item $Z(A) = \K$ in any of the following cases:
  \begin{enumerate}
  \item[\rm (ia)] $q_i$ is not a root of unity, for all $1 \le i \le n$.
  \item[\rm (ib)] There exists $i$ such that $q_i = 1$ and $d_i \neq 0$ or 
  $a_i \neq 0$.
  \end{enumerate}
\item If $q_i$ is a primitive root of unity of order $l_i \ge 2$, then $y_i^{\,l_i}$ is central, for all 
$1 \le i \le n$, and 
$\K \subsetneq Z(A)$.
\end{enumerate}
\end{theorem}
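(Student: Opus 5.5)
The plan is to reduce to the one-variable case. The defining relations make $y_j$ commute with $x_i$ for $i\ne j$, and the $y$'s commute with each other. The $x$'s also commute among themselves, since they live in the base ring $\K[x_1,\dots,x_n]$. So the assignment $x_i\mapsto x_i$, $y_i\mapsto y_i$ should give a $\K$-algebra isomorphism
\[
A\cong A_1\otimes_{\K}\cdots\otimes_{\K}A_n,\qquad A_i:=\K[x_i][y_i;\sigma_i,\delta_i],
\]
where $\sigma_i(x_i)=q_ix_i+d_i$ and $\delta_i(x_i)=a_i$. This corresponds to rewriting $y_ix_i=(q_ix_i+d_i)y_i+a_i$. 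Bijectivity of the map follows by comparing the PBW bases $\{x^\alpha y^\beta\}$ on both sides. Over a field, the center of a tensor product is the tensor product of the centers, so $Z(A)=\bigotimes_i Z(A_i)$. Everything therefore reduces to the two-generator algebras $A_i$. For part (i), it suffices to show $Z(A_i)=\K$ for each factor under the stated hypotheses. I read (ib) as requiring this for every index; otherwise the remaining factors must be treated as in (ia).

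\textbf{Part (ii).} I would use Lemma~\ref{centropbw2v} with $q_1=q_i$, $q_2=0$, $q_3=d_i$ and $q_4=a_i$, applied to $y_i^{l_i}x_i$. Since $q_2=0$, the binomial sum collapses to $q_i^{l_i}x_iy_i^{l_i}=x_iy_i^{l_i}$. Every remaining term either contains a factor $q_2$ or contains the factor $\sum_{k=0}^{l_i-1}q_i^k=0$, so all of them vanish. This gives $y_i^{l_i}x_i=x_iy_i^{l_i}$. Moreover $y_i^{l_i}$ commutes with $y_i$ trivially and with all $x_j$, $y_j$ for $j\neq i$ by the defining relations, so it is central. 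Since it is not a scalar, we get $\K\subsetneq Z(A)$.

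\textbf{Part (i).} Fix one factor, drop the index, and let $z=\sum_{k=0}^m f_k(x)y^k$ be central with $f_m\ne0$. Using $y^kx=\sigma^k(x)y^k+(\text{lower terms in }y)$, compare the coefficients of $y^m$ in $zx=xz$. This gives $f_m\sigma^m(x)=xf_m$, hence
\[
\sigma^m(x)=q^mx+d(1+q+\cdots+q^{m-1})=x.
\]
From $yz=zy$, the top coefficient gives $f_m(\sigma(x))=f_m(x)$. The cases then go as follows.
\begin{enumerate}
\item If $q$ is not a root of unity, then $m=0$. Now $z=f(x)$ must satisfy $f(qx+d)=f(x)$. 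Comparing leading coefficients gives $q^{\deg f}=1$, so $f$ is constant.
\item If $q=1$ and $d\ne0$, then $\sigma^m(x)=x+md$. Characteristic zero forces $m=0$, and $f(x+d)=f(x)$ again forces $f$ to be constant.
\item If $q=1$, $d=0$ and $a\ne0$, we have the Weyl algebra, and the top-coefficient argument above gives no information.
\end{enumerate}

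The main obstacle is this Weyl case. There I would use the next coefficient instead. In $zx-xz$ the coefficient of $y^{m-1}$ is $ma f_m$, coming from $y^mx=xy^m+may^{m-1}$. Characteristic zero then forces $m=0$. Then $yf=fy+af'(x)$ gives $f'=0$, so $f\in\K$. The bookkeeping of lower-order terms in these comparisons, and the careful verification of the tensor decomposition, are the routine parts I would check last.
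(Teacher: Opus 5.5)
Your proof is correct. There is no argument in the paper to compare it with: the statement is quoted without proof from \cite[Theorem 2.6]{LezamaVenegas2020}, whose toolkit (visible here in Lemma~\ref{centropbw2v}) consists of explicit commutation formulas computed inside $A$.

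Your route is structurally different. The isomorphism $A\cong\bigotimes_i\K[x_i][y_i;\sigma_i,\delta_i]$ is well defined because the factors commute pairwise in $A$, and it is bijective by the PBW bases. Together with $Z(\bigotimes_i A_i)=\bigotimes_i Z(A_i)$, it reduces everything to rank-one Ore extensions over $\K[x]$. There the top coefficient of $zx-xz$ forces $\sigma^m(x)=x$, and in the Weyl case the next coefficient, $m\,a\,f_m$, finishes the argument.

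What this buys is a sharp criterion. In characteristic zero, $Z(A)=\K$ if and only if, for every $i$, either $q_i$ is not a root of unity, or $q_i=1$ and $(d_i,a_i)\neq(0,0)$. Characteristic zero is the standing hypothesis of that subsection, and you use it exactly in the cases $q_i=1$.

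This also shows that your reinterpretation of (ib) is necessary, not just convenient. Read literally with ``there exists $i$'', (ib) fails for $n\ge 2$. For example, $q_1=1$, $d_1\neq 0$ together with $q_2=1$, $d_2=a_2=0$ makes $x_2$ central. Likewise, $q_2=-1$ makes $y_2^{2}$ central by (ii).

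For part (ii), you can avoid the typographically garbled formula of Lemma~\ref{centropbw2v}. Use instead the recursion you already need in (i): $y^m x=\sigma^m(x)\,y^m+a\,[m]_q\,y^{m-1}$, with $\sigma^m(x)=q^m x+d\,[m]_q$ and $[m]_q=1+q+\cdots+q^{m-1}$. Then $q^l=1$ and $[l]_q=0$ give $y^l x=xy^l$ at once.
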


The theorem above yields precise criteria under which these algebras satisfy the 
PI property.

\begin{theorem}\label{PI5}
Let $\K$ be a field and let 
$A = \sigma\!\bigl(\K[x_1,\dots,x_n]\bigr)\langle y_1,\dots,y_n\rangle$ be the 
skew PBW extension defined as in Theorem \ref{PI3}.
If $q_i$ is a primitive root of unity of order $l_i \ge 2$, for all 
$1 \le i \le n$, then $A$ satisfies the PI property.
\end{theorem}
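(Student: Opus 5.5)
The plan is to exhibit a commutative central subalgebra over which $A$ is a finitely generated module, and then conclude with \cite[\S 13.1.13, Corollary]{mcconnell}, exactly as in the converse part of Theorem~\ref{PI-semi-comm}. The relations of Theorem~\ref{PI3} decouple the index $i$ from every other index: the $x$'s commute among themselves (they generate the polynomial ring $R=\K[x_1,\dots,x_n]$), the $y$'s commute among themselves, and $y_j$ commutes with $x_i$ for $i\neq j$. So each pair $(x_i,y_i)$ interacts only with itself, through $y_ix_i=q_ix_iy_i+d_iy_i+a_i$, and it suffices to find, for each fixed $i$, two central elements built from $x_i$ and $y_i$ alone.

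First I remove the term $d_iy_i$ by a shift of $x_i$. Since $q_i$ is a primitive root of unity of order $l_i\ge 2$, we have $q_i\neq 1$, so I can set
\[
x_i' := x_i + \frac{d_i}{q_i-1}.
\]
A direct computation then gives $y_ix_i' = q_i x_i' y_i + a_i$. Thus the subalgebra generated by $x_i'$ and $y_i$ satisfies the relation of Proposition~\ref{centroPBW2v} with $q_1=q_i$ and $q_4=a_i$.

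By the identities of Lemma~\ref{centropbw2v} in the case $q_2=q_3=0$, together with $\sum_{k=0}^{l_i-1}q_i^k=0$, this yields
\[
y_i (x_i')^{l_i} = (x_i')^{l_i} y_i, \qquad y_i^{l_i}x_i' = x_i' y_i^{l_i}.
\]
Since $x_i = x_i' - d_i/(q_i-1)$, the element $y_i^{l_i}$ also commutes with $x_i$. Both $(x_i')^{l_i}$ and $y_i^{l_i}$ commute with all $x_j,y_j$ for $j\neq i$ by the decoupled relations. Hence
\[
(x_i')^{l_i},\; y_i^{l_i}\in Z(A), \qquad 1\le i\le n,
\]
and $C:=\K[(x_1')^{l_1},\dots,(x_n')^{l_n},y_1^{l_1},\dots,y_n^{l_n}]$ is a commutative central subalgebra. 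Part (ii) of Theorem~\ref{PI3} already gives $y_i^{l_i}$ central, which is consistent with this.

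Next I show that $A$ is a finitely generated $C$-module. The substitution $x_i\mapsto x_i'$ is an affine change of variables, so $R=\K[x_1',\dots,x_n']$. The standard monomials show that $A$ is spanned over $\K$ by the elements $x'^{\beta}y^{\alpha}$. Writing each exponent as $\beta_i=b_i+l_is_i$ and $\alpha_i=a_i'+l_it_i$ with $0\le b_i,a_i'<l_i$, and pulling the central powers to the front, $A$ is generated as a $C$-module by the finite set
\[
\bigl\{\,x_1'^{\,b_1}\cdots x_n'^{\,b_n}\,y_1^{a_1'}\cdots y_n^{a_n'} \;\bigm|\; 0\le b_i,a_i'<l_i\,\bigr\}.
\]
Then \cite[\S 13.1.13, Corollary]{mcconnell} gives the PI property.

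The only real obstacle is justifying that the shift removes $d_iy_i$ and checking that the resulting relation has exactly the shape of Proposition~\ref{centroPBW2v}; this is where $q_i\neq 1$ is essential, and it is the reason the hypothesis $l_i\ge 2$ is needed. Everything else is bookkeeping on standard monomials.
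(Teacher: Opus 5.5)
Your proof is correct, and it takes a different route from the paper's. The paper's own argument has a gap that yours avoids.

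The paper uses only the central elements $y_i^{l_i}$ from Theorem~\ref{PI3}-(ii). It sets $Z_0=\K[y_1^{l_1},\dots,y_n^{l_n}]$ and asserts that $A$ is generated as a $Z_0$-module by the monomials $y_1^{m_1}\cdots y_n^{m_n}$ with $0\le m_i<l_i$. That step fails. The $Z_0$-span of those monomials lies in $\K[y_1,\dots,y_n]$ and does not contain $x_1$. Indeed, $A$ is free over $\K[x_1,\dots,x_n]$ on the standard monomials, so it cannot be finitely generated over $Z_0$ at all.

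What is missing there is a central element involving each $x_i$, and your shift $x_i'=x_i+d_i/(q_i-1)$ supplies exactly that. It turns each pair into $y_ix_i'=q_ix_i'y_i+a_i$. Then $\sum_{k=0}^{l_i-1}q_i^k=0$ kills the correction terms, so $(x_i')^{l_i}$ is central. When $d_i=0$ no shift is needed and $x_i^{l_i}$ itself works.

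Your computations check out. The identities you borrow from Lemma~\ref{centropbw2v} follow formally from the single relation by induction on the exponent, so they hold inside $A$ without any further PBW considerations. Your finite generating set over $C$ is also correct, since $R=\K[x_1',\dots,x_n']$, the $y_j$ commute, and the $y_i^{l_i}$ are central.

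Your route also gives a stronger conclusion than PI: $A$ is a finitely generated module over a central subalgebra, hence over its center. It uses only $q_i^{l_i}=1$ and $q_i\neq 1$, not primitivity.

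The paper's shorter approach can be salvaged without new central elements, but not with the citation it uses. $A$ is a finitely generated left module over the commutative, non-central subalgebra $\K[x_1,\dots,x_n][y_1^{l_1},\dots,y_n^{l_n}]$. Right multiplication then embeds $A^{\mathrm{op}}$ in the endomorphism ring of a finitely generated module over a commutative ring, and that ring satisfies a standard identity by Amitsur--Levitzki.
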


\begin{proof}
By Theorem~\ref{PI3}-(ii), the power $y_i^{\,l_i} \in Z(A)$ for each $1 \le i \le n$, so 
that $Z_0 := \K[y_1^{l_1}, \dots, y_n^{l_n}] \subseteq Z(A)$. Every element of 
$A$ can be written as a $Z_0$-linear combination of monomials in the finite set
\[
\bigl\{\, y_1^{m_1} \cdots y_n^{m_n} \mid 0 \le m_i < l_i,\ 1 \le i \le n \,\bigr\}.
\]
Therefore $A$ is finitely generated as a $Z_0$-module, for \cite[\S 13.1.13, Corollary]{mcconnell}, the extension $A$ satisfies the PI property.
\end{proof}

\begin{theorem}\label{PI9}
Under the same conditions as in the previous theorem, if $A$ satisfies the PI property, then the following conditions hold:
\begin{enumerate}
  \item[\rm (ia)] There exists $i \in \{1,\dots,n\}$ such that $q_i$ is a root of 
  unity.
  \item[\rm (ib)] For all $i \in \{1,\dots,n\}$, either $q_i \neq 1$ or 
  $(d_i = 0$ and $a_i = 0)$.
\end{enumerate}
\end{theorem}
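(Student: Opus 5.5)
Both conditions come from contraposition: if either (ia) or (ib) fails, Theorem \ref{PI3}-(i) gives $Z(A)=\K$, and then Corollary \ref{PIprima3} rules out the PI property. So I assume $A$ is PI and show that neither case (ia) nor case (ib) of Theorem \ref{PI3}-(i) can occur.

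First I check that Corollary \ref{PIprima3} applies. The base ring $R=\K[x_1,\dots,x_n]$ is a commutative domain, hence a prime $\K$-algebra. The extension is bijective:
\begin{itemize}
\item the relations $y_iy_j=y_jy_i$ give $c_{i,j}=1$, which is invertible;
\item the relations $y_i x_i = q_i x_i y_i + d_i y_i + a_i$ and $y_jx_i=x_iy_j$ show that $\sigma_i$ is the $\K$-algebra endomorphism of $R$ with $x_i\mapsto q_ix_i+d_i$ and $x_j\mapsto x_j$ for $j\neq i$.
\end{itemize}
Here I take $q_i\neq 0$, as is implicit in the definition of a skew PBW extension, since $c_{i,r}\neq0$. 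Then $\sigma_i$ is an affine change of variables with inverse $x_i\mapsto q_i^{-1}(x_i-d_i)$, so it is bijective. Also $n\geq1$, so Corollary \ref{PIprima3} applies: if $A$ is PI, then $Z(A)\neq\K$.

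For (ia), suppose no $q_i$ is a root of unity. This is exactly case (ia) of Theorem \ref{PI3}-(i), so $Z(A)=\K$, contradicting the previous step. Hence some $q_i$ is a root of unity.

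For (ib), suppose some $i$ has $q_i=1$ and $(d_i,a_i)\neq(0,0)$. This is case (ib) of Theorem \ref{PI3}-(i), again giving $Z(A)=\K$ and a contradiction. So for every $i$, either $q_i\neq1$ or $d_i=a_i=0$.

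The argument is essentially formal. The only step needing care is the bijectivity check, in particular that $\sigma_i$ is really an automorphism of $\K[x_1,\dots,x_n]$. This amounts to reading $\sigma_i$ off the defining relations and noting $q_i\in\K^*$. I would also note that the phrasing of (ib) in Theorem \ref{PI3} ("$d_i\neq0$ or $a_i\neq0$") matches the negation of our condition (ib), so no extra case analysis is needed.
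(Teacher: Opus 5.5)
Your proposal is correct and follows the same route as the paper: if (ia) or (ib) fails, Theorem \ref{PI3}-(i) gives $Z(A)=\K$, and since $A$ is a bijective skew PBW extension of the prime ring $\K[x_1,\dots,x_n]$ (hence prime by Lemma \ref{PrimoPBW}), Corollary \ref{PIprima3} shows $A$ is not PI. Your explicit check that each $\sigma_i\colon x_i\mapsto q_ix_i+d_i$ is an automorphism supplies the bijectivity that the paper only asserts.
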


\begin{proof}
Suppose that at least one of conditions (ia) or (ib) fails. Then 
Theorem \ref{PI3}-(i) implies $Z(A) = \K$. Since $\K[x_1,\dots,x_n]$ is an 
integral domain and hence a prime ring, and $A$ is a bijective skew PBW extension 
of $\K[x_1,\dots,x_n]$, Lemma \ref{PrimoPBW} implies that $A$ is prime. Applying 
Corollary \ref{PIprima3}, we conclude that $A$ does not satisfy the PI property, 
contradicting the hypothesis. Therefore, conditions (ia) and (ib) must hold 
simultaneously.
\end{proof}

Combining Lemma \ref{PrimoPBW}, Theorem \ref{PI3}-(i), and 
Corollary \ref{PIprima3}, the bijective skew PBW extensions presented below are 
prime, have trivial center, and consequently do not satisfy the PI property.
\begin{example}
\begin{enumerate}[(1)]
  \item \emph{Algebra of linear partial shift operators.}
  The $\K$-algebra of linear partial shift operators with polynomial coefficients denoted by $\K[t_1,\ldots,t_n][E_1,\ldots,E_n]$
  (respectively, with rational coefficients  
  $\K(t_1,\ldots,t_n)[E_1,\ldots,E_n]$), is subject to the relations
  \begin{align*}
    t_j t_i &= t_i t_j, && 1 \le i < j \le n,\\
    E_j E_i &= E_i E_j, && 1 \le i < j \le n,\\
    E_i t_i &= (t_i+1)\,E_i = t_i E_i + E_i, && 1 \le i \le n,\\
    E_j t_i &= t_i E_j, && i \ne j.
  \end{align*}
  In particular, $q_i = 1$, $d_i = 1$, and $a_i = 0$ for all $1 \le i \le n$. 
  This algebra is a bijective skew PBW extension of $\K[t_1,\ldots,t_n]$ 
  (respectively, of $\K(t_1,\ldots,t_n)$).

  \item \emph{Algebra of linear partial difference operators.}
  The $\K$-algebra of linear partial difference operators with polynomial 
  coefficients  
  $\K[t_1,\ldots,t_n][\Delta_1,\ldots,\Delta_n]$ (respectively, with rational coefficients), is subject to the relations
  \begin{align*}
    t_j t_i &= t_i t_j, && 1 \le i < j \le n,\\
    \Delta_i t_i &= (t_i+1)\,\Delta_i = t_i \Delta_i + \Delta_i + 1, 
    && 1 \le i \le n,\\
    \Delta_i t_j &= t_j \Delta_i, && i \ne j,\\
    \Delta_j \Delta_i &= \Delta_i \Delta_j, && 1 \le i < j \le n.
  \end{align*}
  In particular, $q_i = 1$, $d_i = 1$, and $a_i = 1$ for all $1 \le i \le n$. 
  This algebra is a bijective skew PBW extension of $\K[t_1,\ldots,t_n]$.

  \item \emph{Additive analogue of the Weyl algebra.}
  This $\K$-algebra, denoted $A_n(q_1,\dots,q_n)$, is generated by the variables 
  $x_1,\dots,x_n, y_1,\dots,y_n$ subject to the relations
  \begin{align*}
    x_j x_i &= x_i x_j, \qquad y_j y_i = y_i y_j, && 1 \le i,j \le n,\\[2pt]
    y_j x_i &= x_i y_j, && i \ne j,\\[2pt]
    y_i x_i &= q_i\, x_i y_i + 1, && 1 \le i \le n,
  \end{align*}
  where each $q_i \in \K^*$ is not a root of unity. One has 
  $A_n(q_1,\dots,q_n) \cong \sigma\!\bigl(\K[x_1,\dots,x_n]\bigr)
  \langle y_1,\dots,y_n\rangle$, with $d_i = 0$ and $a_i = 1$, for 
  $1 \le i \le n$. Note that if each $q_i$ were a non-trivial root of unity, then this algebra $A_n(q_1,\dots,q_n)$ would satisfy the PI property.
\end{enumerate}
\end{example}

The following examples illustrate algebras for which the PI property follows 
from Theorem~\ref{PI5}, under the assumption that $q_i$ is a primitive root of 
unity of order $l_i \ge 2$, for each $1 \le i \le n$.

\begin{example}
\begin{enumerate}[(1)]
  \item \emph{Algebra of $q$-differential operators.}
  The algebra of $q$-differential operators $D_{q,h}[x,y]$ is the $\K$-algebra 
  generated by $x$ and $y$ subject to the relation $yx = q\,xy + h$, where 
  $q, h \in \K^*$. Here $q_1 = q$, $d_1 = 0$, and $a_1 = h$.


  \item \emph{Algebra of linear partial $q$-dilation operators.}
  The algebra $\K[t_1,\ldots,t_n]\langle H_1,\ldots,H_n\rangle$ is generated by 
  $t_1,\ldots,t_n, H_1,\ldots,H_n$ subject to the relations
  \begin{align*}
    t_j t_i &= t_i t_j, \qquad H_j H_i = H_i H_j, && 1 \le i < j \le n,\\
    H_i t_i &= q\, t_i H_i, && 1 \le i \le n,\\
    H_j t_i &= t_i H_j, && i \ne j,
  \end{align*}
  where $q \in \K^*$. Here $q_i = q$, $d_i = 0$, and $a_i = 0$ for all 
  $1 \le i \le n$.

  \item \emph{Algebra of linear partial $q$-differential operators.}
  The algebra $\K[t_1,\ldots,t_n]\langle D_1,\ldots,D_n\rangle$ is generated by 
  $t_1,\ldots,t_n, D_1,\ldots,D_n$ subject to the relations
  \begin{align*}
    t_j t_i &= t_i t_j, \qquad D_j D_i = D_i D_j, && 1 \le i < j \le n,\\
    D_i t_i &= q\, t_i D_i + 1, && 1 \le i \le n,\\
    D_j t_i &= t_i D_j, && i \ne j,
  \end{align*}
  where $q \in \K^*$. Here $q_i = q$, $d_i = 0$, and $a_i = 1$, for all 
  $1 \le i \le n$.
\end{enumerate}
\end{example}
\subsection{The PI property of other skew PBW extensions}
\noindent
In this subsection we study the PI property of further algebras whose center has 
already been determined in the literature and which admit an interpretation as 
skew PBW extensions. Unless otherwise stated, the base field $\K$ is assumed to 
have characteristic zero.

\subsection*{Extended Weyl algebra}

The extended Weyl algebra is defined as
\[
B_n(\K) := \K(t_1,\dots,t_n)[x_1;\,\partial/\partial t_1]
\cdots[x_n;\,\partial/\partial t_n],
\]
where $\K(t_1,\dots,t_n)$ denotes the field of fractions of $\K[t_1,\dots,t_n]$. This algebra can be seen as a skew PBW extension, where the endomorphisms in  Proposition \ref{prop23} satisfy
$\sigma_i = \mathsf{id}$, for each $1 \le i \le n$; consequently,
\[
B_n(\K) \cong \sigma\bigl(\K(t_1,\dots,t_n)\bigr)\langle x_1, \dots, x_n \rangle.
\]

\begin{proposition}
The extended Weyl algebra $B_n(\K)$ does not satisfy the PI property.
\end{proposition}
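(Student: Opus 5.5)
The plan is to apply Corollary \ref{PIprima3}. The base ring $R=\K(t_1,\dots,t_n)$ is a field, hence a prime $\K$-algebra. The extension $B_n(\K)\cong\sigma(R)\langle x_1,\dots,x_n\rangle$ is bijective, since each $\sigma_i=\mathsf{id}$ and all $c_{i,j}=1$ (the $x_i$ pairwise commute). Lemma \ref{PrimoPBW} then says $B_n(\K)$ is prime. It therefore suffices to show $Z(B_n(\K))=\K$, which we prove in characteristic zero, the standing assumption of this subsection. If $B_n(\K)$ were PI, Theorem \ref{PIprima2} would force a nontrivial center, a contradiction.

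\textbf{Step 1: a central element has coefficients in $\K$.} Let $z=\sum_\alpha r_\alpha x^\alpha$ be central, with $r_\alpha\in R$. In $B_n(\K)$ we have $x_i r = r x_i + \partial r/\partial t_i$, so $\mathrm{ad}_{x_i}(r)=\partial r/\partial t_i$. Moreover $x_i$ commutes with every $x_j$. Hence
\[
0=[x_i,z]=\sum_\alpha \frac{\partial r_\alpha}{\partial t_i}\,x^\alpha .
\]
Since $\mathrm{Mon}$ is a left $R$-basis, each $r_\alpha$ has all partial derivatives equal to zero. In characteristic zero this means $r_\alpha\in\K$. To justify this, write $r=f/g$ in lowest terms. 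Viewing $r$ as a rational function in $t_i$ over $\K(t_1,\dots,\widehat{t_i},\dots,t_n)$, a zero derivative in characteristic zero forces $r$ to be constant in $t_i$. Doing this for every $i$ gives $r\in\K$.

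\textbf{Step 2: a central element has no $x$-terms.} Now $z=\sum_\alpha k_\alpha x^\alpha$ with $k_\alpha\in\K$, and we compute $[z,t_i]=0$. We have $x_i t_i = t_i x_i + 1$, and $x_j$ commutes with $t_i$ for $j\ne i$. Therefore
\[
[x^\alpha,t_i]=\alpha_i\,x^{\alpha-e_i},
\]
because $\mathrm{ad}_{x_i}$ acts as $\partial/\partial t_i$, and $[x_i^{m},t_i]=m x_i^{m-1}$ by the Leibniz rule. It follows that
\[
0=[z,t_i]=\sum_\alpha k_\alpha\alpha_i x^{\alpha-e_i}.
\]
By linear independence, $k_\alpha\alpha_i=0$ for all $\alpha$ and $i$. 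Since $\operatorname{char}\K=0$, we get $k_\alpha=0$ whenever $\alpha\neq 0$. Thus $z\in\K$, and so $Z(B_n(\K))=\K$.

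\textbf{Step 3: conclusion.} Since $n\ge1$, Corollary \ref{PIprima3} shows that $B_n(\K)$ is not PI.

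The only delicate point is Step 1, namely the claim that a rational function with all partial derivatives zero is constant. This is where characteristic zero is essential. In characteristic $p$ the statement fails, since $t_i^p$ and $x_i^p$ are central and $B_n(\K)$ is then PI. Alternatively, one could cite the known description of the center of $B_n(\K)$ from \cite{LezamaVenegas2020}.
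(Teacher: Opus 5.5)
Your proof is correct and follows the same route as the paper: primeness comes from Lemma~\ref{PrimoPBW} (the base ring $\K(t_1,\dots,t_n)$ is a field), the center is trivial, and Corollary~\ref{PIprima3} finishes the argument. The only difference is that the paper simply cites $Z(B_n(\K))=\K$ from the literature (McConnell--Robson, Goodearl--Warfield, Dixmier), whereas you verify it directly via the commutators with $x_i$ and $t_i$, which makes your argument self-contained and shows exactly where characteristic zero is used.
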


\begin{proof}
It is known that $Z(B_n(\K)) = \K$, (see e.g. \cite[\S 1.3.9]{mcconnell}, \cite[Exercise 6H]{GoodearlWarfield2004} or \cite{Dixmier1996}). Since 
$\K(t_1,\dots,t_n)$ is a field and hence a prime ring, Lemma \ref{PrimoPBW} 
implies that $B_n(\K)$ is a prime skew PBW extension. By 
Corollary \ref{PIprima3}, the algebra $B_n(\K)$ does not satisfy the PI property.
\end{proof}

\subsection*{Shift operator algebra $S_h$}

Let $h \in \K$. The bijective-type Ore extension 
$S_h := \K[t][x_h;\sigma_h,\delta_h]$, with $\sigma_h(p(t)) := p(t-h)$ and 
$\delta_h := 0$, is a skew PBW extension of $\K[t]$:
\[
S_h \cong \sigma(\K[t])\langle x_h \rangle.
\]

\begin{proposition}
The shift operator algebra $S_h$ does not satisfy the PI property.
\end{proposition}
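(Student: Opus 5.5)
The plan is to show that $Z(S_h)=\K$ and then apply Corollary~\ref{PIprima3}. Throughout I assume $h\neq 0$. For $h=0$ we get $\sigma_0=\mathsf{id}$, so $S_0=\K[t][x]$ is commutative and trivially PI; the statement must therefore be read with $h\in\K^*$. I also use the standing assumption of this subsection that $\operatorname{char}\K=0$.

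\emph{Hypotheses of Corollary~\ref{PIprima3}.} The base ring $\K[t]$ is an integral domain, hence a prime $\K$-algebra. The map $\sigma_h(p(t))=p(t-h)$ is a $\K$-algebra automorphism with inverse $p(t)\mapsto p(t+h)$. Since there is only one variable, there are no constants $c_{i,j}$ with $i<j$ to check. So $S_h\cong\sigma(\K[t])\langle x_h\rangle$ is a bijective skew PBW extension with $n=1$.

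\emph{Computing the center.} Write a general element as $f=\sum_{i=0}^{m}p_i(t)\,x_h^i$ in the standard left $\K[t]$-basis. Induction on $i$ gives $x_h^i\,p(t)=p(t-ih)\,x_h^i$. Two commutators then have to vanish.

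First, $x_hf-fx_h=\sum_i\bigl(p_i(t-h)-p_i(t)\bigr)x_h^{i+1}$. If this is zero, freeness of the basis gives $p_i(t-h)=p_i(t)$ for every $i$. Because $\operatorname{char}\K=0$ and $h\ne0$, a nonconstant polynomial cannot be periodic: if $\deg p_i=d\ge1$, the coefficient of $t^{d-1}$ in $p_i(t-h)-p_i(t)$ is $-dh\,a_d\neq0$, where $a_d$ is the leading coefficient. Hence every $p_i\in\K$.

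Second, with all $p_i$ scalars, $ft-tf=\sum_i p_i\bigl((t-ih)-t\bigr)x_h^i=-\sum_i ih\,p_i\,x_h^i$. If this is zero, then $ih\,p_i=0$ for all $i$, and again $\operatorname{char}\K=0$ and $h\neq0$ force $p_i=0$ for $i\ge1$. Therefore $Z(S_h)=\K$.

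\emph{Conclusion and main obstacle.} Corollary~\ref{PIprima3} now gives directly that $S_h$ is not PI. The only real work is the center computation. Its one delicate point is where characteristic zero and $h\neq0$ are used. In characteristic $p>0$ the center is genuinely larger: $x_h^p$ is central and $t^p-h^{p-1}t$ is $\sigma_h$-invariant. In that case $S_h$ is a finite module over its center, so the argument, and the statement, would fail.
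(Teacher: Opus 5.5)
Your proposal is correct and follows the same route as the paper: $Z(S_h)=\K$ together with the bijective skew PBW structure over the prime ring $\K[t]$ gives the result via Corollary~\ref{PIprima3}. The only difference is that you verify $Z(S_h)=\K$ by a direct commutator computation with $x_h$ and $t$, where the paper simply cites Example~1.9-(b) of \cite{LezamaVenegas2020}; your caveat that $h\neq 0$ and $\operatorname{char}\K=0$ are needed is also right, since $S_0=\K[t][x]$ is commutative, and the paper leaves this restriction implicit.
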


\begin{proof}
By Example~1.9-(b) of \cite{LezamaVenegas2020} one has that $Z(S_h) = \K$. Since $\K[t]$ is 
an integral domain and hence a prime ring, Lemma \ref{PrimoPBW} implies that 
$S_h$ is a prime skew PBW extension. By Corollary~\ref{PIprima3}, the algebra $S_h$ does 
not satisfy the PI property.
\end{proof}

\subsection*{Quantum enveloping algebra $U_q(\mathfrak{sl}(2,\K))$}

The algebra $U_q(\mathfrak{sl}(2,\K))$ is defined as the $\K$-algebra generated 
by $x, y, z, z^{-1}$ subject to the relations
\[
zz^{-1} = z^{-1}z = 1, \qquad xz = q^{-2}zx, \qquad yz = q^{2}zy, \qquad
xy - yx = \frac{z - z^{-1}}{q - q^{-1}},
\]
where $q \neq \pm 1$. These relations show that 
$U_q(\mathfrak{sl}(2,\K)) = \sigma\bigl(\K[z,z^{-1}]\bigr)\langle x,y\rangle$. 
For arbitrary $q$, the center of this algebra is generated by the Casimir element 
$C_2 = (q^{2}-1)^{2}xy + qz + q^{3}z^{-1}$; if $q$ is an $n$-th root of unity 
with $n \ge 2$, in \cite[Theorem 4.2]{DeConciniKac1990} was proved that
\[
Z\bigl(U_q(\mathfrak{sl}(2,\K))\bigr) = \K\bigl[C_2, x^{n}, y^{n}, z^{n}, 
z^{-n}\bigr].
\]

\begin{theorem}\label{PI-Uqsl2}
Let $q \in \K^{*}$ with $q \neq \pm 1$. Then $U_q(\mathfrak{sl}(2,\K))$ satisfies 
the PI property if and only if $q$ is a root of unity.
\end{theorem}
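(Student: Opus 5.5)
For the sufficiency direction, assume $q$ is a primitive $\ell$-th root of unity, and let $n$ denote the order of $q^{2}$. I will first show that $x^{n}$, $y^{n}$ and $z^{\pm n}$ are central; this is the statement of \cite[Theorem 4.2]{DeConciniKac1990}, but it can also be checked directly.
\begin{itemize}
\item From $xz=q^{-2}zx$ we get $x^{n}z=q^{-2n}zx^{n}=zx^{n}$, and likewise $y^{n}z=zy^{n}$, while $z^{n}$ commutes with $x$ and $y$.
\item For the commutator $[x,y^{n}]$, the standard $q$-commutator identity gives
\[
[x,y^{n}]=y^{n-1}\,\frac{[n]_q\,(q^{1-n}z-q^{n-1}z^{-1})}{q-q^{-1}}
\]
up to normalization. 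Here $[n]_q=\frac{q^{n}-q^{-n}}{q-q^{-1}}=0$, since $q^{2n}=1$ and $q^{2}\neq1$. Hence $[x,y^{n}]=0$, and symmetrically $[y,x^{n}]=0$.
\end{itemize}
Set $Z_0:=\K[x^{n},y^{n},z^{n},z^{-n}]\subseteq Z(U_q)$. Using the PBW basis $\{y^{a}z^{c}x^{b}\mid a,b\in\mathbb{N},\,c\in\mathbb{Z}\}$ of the skew PBW extension $\sigma(\K[z^{\pm1}])\langle x,y\rangle$, we reduce exponents modulo $n$ exactly as in the proof of Theorem~\ref{PI-semi-comm}. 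This shows that $U_q$ is generated as a $Z_0$-module by the finite set $\{y^{a}z^{c}x^{b}\mid 0\le a,b,c<n\}$. By \cite[\S 13.1.13, Corollary]{mcconnell}, $U_q$ is then a PI-algebra.

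For the necessity direction, suppose $U_q$ is PI and $q$ is not a root of unity. Note that Theorem~\ref{PIprima2} alone is not enough here, because the center is nontrivial: it contains $C_2$. I will therefore use the filtration argument instead.
\begin{itemize}
\item $U_q=\sigma(\K[z^{\pm1}])\langle x,y\rangle$ is filtered by total degree in $x$ and $y$, as in Proposition~\ref{filtracion}.
\item The associated graded ring $\mathsf{Gr}(U_q)$ is the quasi-commutative extension in which $\bar x\bar y=\bar y\bar x$, $\bar x z=q^{-2}z\bar x$ and $\bar y z=q^{2}z\bar y$.
\item By Lemma~\ref{graduadoasociado}, $\mathsf{Gr}(U_q)$ is PI.
\item Its subalgebra generated by $z$ and $\bar x$ is the quantum plane $\K\langle z,\bar x\rangle/(\bar x z-q^{-2}z\bar x)$. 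This holds because the monomials $z^{c}\bar x^{b}$ with $c\ge0$ stay linearly independent in the graded PBW basis.
\item Subalgebras of PI-algebras are PI, so by \cite[\S 7.1]{DeConciniProcesi1993} $q^{-2}$ is a root of unity. Then $q$ is a root of unity, a contradiction.
\end{itemize}

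An alternative route for necessity avoids the graded ring. Work in the quantum plane $\K\langle z,x\rangle$ sitting inside $U_q$ directly; it is a subalgebra because the PBW basis restricts to the monomials $z^{c}x^{b}$. Since this subalgebra of $U_q$ is PI, $q^{2}$ must be a root of unity. This route is even simpler, and I would present it as the main argument.

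The main obstacle is justifying that the subalgebra generated by $z$ and $x$ is truly isomorphic to $\mathcal{O}_{q^{-2}}(\K^{2})$, that is, that there are no extra relations. This follows from the freeness of the standard monomials of the skew PBW extension over $\K[z^{\pm1}]$. A secondary point is the $q$-commutator computation for $[x,y^{n}]$, which I would do by induction on $n$.
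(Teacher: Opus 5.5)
Your proposal is correct and follows the same strategy as the paper. For necessity, both pass to a rank-two skew-polynomial subalgebra: you use the quantum plane generated by $z$ and $x$ with the De Concini--Procesi criterion, while the paper uses $\K[z^{\pm1}][y;\sigma]$ with Proposition~2.5 of Leroy--Matczuk; for sufficiency, both exhibit $U_q(\mathfrak{sl}(2,\K))$ as a finite module over a central subalgebra. Your direct check that $x^{n}$, $y^{n}$, $z^{\pm n}$ are central, with $n$ the order of $q^{2}$, is more self-contained than the paper's appeal to the De Concini--Kac description of the whole center, and it also covers roots of unity of even order cleanly.
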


\begin{proof}
Suppose that $U_q(\mathfrak{sl}(2,\K))$ satisfies the PI property and consider 
the subalgebra
\[
B := \K\langle y, z, z^{-1} \mid zz^{-1} = z^{-1}z = 1,\ yz = q^{2}zy\rangle.
\]
Then $B \cong \K[z,z^{-1}][y;\sigma]$ is an Ore extension of endomorphism type 
over the prime ring $\K[z,z^{-1}]$, where $\sigma(z) = q^{2}z$. Since $B$ is a 
subalgebra of $U_q(\mathfrak{sl}(2,\K))$ and the latter satisfies the PI 
property, $B$ does as well. By Proposition 2.5 of \cite{LeroyMatczuk2007}, the 
restriction $\sigma|_{Z(\K[z,z^{-1}])}$ has finite order. Since $\K[z,z^{-1}]$ is 
commutative, $Z(\K[z,z^{-1}]) = \K[z,z^{-1}]$ and hence there exists $n \ge 1$ 
such that $\sigma^n = \mathrm{id}_{\K[z,z^{-1}]}$; in particular, 
$\sigma^n(z) = q^{2n}z = z$, so $q^{2n} = 1$ and $q$ is a root of unity.

Conversely, suppose that $q$ is an $n$-th root of unity with $n \ge 2$. Since 
$$Z\bigl(U_q(\mathfrak{sl}(2,\K))\bigr) = \K\bigl[C_2, x^{n}, y^{n}, z^{n}, 
z^{-n}\bigr],$$ the algebra $U_q(\mathfrak{sl}(2,\K))$ is finitely generated as a 
$Z\bigl(U_q(\mathfrak{sl}(2, \K))\bigr)$-module and, by \cite[\S 13.1.13, Corollary]{mcconnell},  $U_q(\mathfrak{sl}(2,\K))$ satisfies the PI property.
\end{proof}

\subsection*{Quantum symplectic space $\mathcal{O}_q(\mathfrak{sp}(\K^{2n}))$}

The quantum symplectic space $\mathcal{O}_q(\mathfrak{sp}(\K^{2n}))$ is the 
algebra generated by $x_1,\dots,x_{2n}$ subject to the relations
\begin{align}
x_j x_i = q\,x_i x_j, \qquad i < j, \quad i' \ne j,
\end{align}
\begin{align}\label{identity2}
x_{i'}x_i = q^{2} x_i x_{i'} + (q^{2}-1)\sum_{1 \le k < i} q^{\,i-k} 
x_k x_{k'}, \qquad i < i',
\end{align}
where $i' = 2n - i + 1$, for $1 \le i \le 2n$. The previous defining relations allow to prove that 
$\mathcal{O}_q(\mathfrak{sp}(\K^{2n}))$ is isomorphic to a bijective skew PBW 
extension of the form
\[
\mathcal{O}_q(\mathfrak{sp}(\K^{2n}))
\cong
\sigma\bigl(\cdots\sigma(\sigma(\K)\langle x_1,x_{2n}\rangle)
\langle x_2,x_{2n-1}\rangle)\cdots\bigr)\langle x_n,x_{n+1}\rangle.
\]
In his article \cite{Zhang2000}, Zhang studied the irreducible representations of 
$\mathcal{O}_q(\mathfrak{sp}(\K^{2n}))$. Among other relevant results, he proved that if 
$m$ is a positive odd integer and $q$ is a primitive $m$-th root of unity, then 
$Z\bigl(\mathcal{O}_q(\mathfrak{sp}(\K^{2n}))\bigr) = \K[x_1^{m},\dots,
x_{2n}^{m}]$, see \cite[Theorem 4.3]{Zhang2000}. Moreover, if $q$ is an 
$m$-th root of unity (not necessarily primitive), then each $x_i^{m}$ is a 
central element of $\mathcal{O}_q(\mathfrak{sp}(\K^{2n}))$, see 
\cite[Lemma 4.2]{Zhang2000}. If $q$ is not a root of unity, the center of this algebra is trivial, as proven below.

\begin{proposition}\label{centerno-root}
Let $\K$ be a field and $q \in \K^{*}$ an element that is not a root of unity. 
Then the center of $\mathcal{O}_q\bigl(\mathfrak{sp}(\K^{2n})\bigr)$ is trivial.
\end{proposition}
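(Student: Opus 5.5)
We use a grading argument by monomial degree vectors, followed by leading-term analysis. Write $A:=\mathcal{O}_q(\mathfrak{sp}(\K^{2n}))$. Put a $\mathbb{Z}^{2n}$-grading, or more conveniently a $\mathbb{Z}^n$-grading, on $A$ by assigning
\[
\deg x_i := e_i,\qquad \deg x_{i'} := -e_i \quad (1\le i\le n).
\]
Every defining relation is homogeneous for this grading: the correction terms $x_kx_{k'}$ in \eqref{identity2} have degree $0$, which is also the degree of $x_ix_{i'}$. The standard monomials $x_1^{a_1}\cdots x_{2n}^{a_{2n}}$ form a $\K$-basis by the skew PBW structure, and each is homogeneous.

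Next we use the torus action. For $\lambda=(\lambda_1,\dots,\lambda_n)\in(\K^*)^n$ the assignment $x_i\mapsto\lambda_ix_i$, $x_{i'}\mapsto\lambda_i^{-1}x_{i'}$ is an automorphism. For each $j$ we compute conjugation by the generator: there is a character $\chi_j$ such that $x_j\,u = q^{\chi_j(\deg u)}u\,x_j$ modulo lower terms. This suggests a cleaner route, namely a filtration argument using Proposition \ref{filtracion}.

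Concretely, let $z\in Z(A)$ and write $z=\sum_\alpha c_\alpha x^\alpha$ in standard monomials. Order the monomials by total degree and then lexicographically. For the leading monomial $x^\alpha$, in $\mathsf{Gr}(A)$ (the quasi-commutative extension in which $\bar x_j\bar x_i=q\bar x_i\bar x_j$ or $q^2\bar x_i\bar x_{i'}$) we have $\bar x_k\,\bar x^\alpha=q^{N_k(\alpha)}\bar x^\alpha\bar x_k$. Here $N_k(\alpha)$ is an explicit integer linear form:
\[
N_k(\alpha)=\sum_{i<k,\ i\neq k'}\alpha_i-\sum_{i>k,\ i\ne k'}\alpha_i\pm 2\alpha_{k'}.
\]
Centrality of $z$ forces its top-degree component to be central in $\mathsf{Gr}(A)$. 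Since $q$ is not a root of unity, we get $N_k(\alpha)=0$ for all $k=1,\dots,2n$.

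We then solve this linear system over $\mathbb{Z}_{\ge0}$ and show that $\alpha=0$. We would take differences $N_k-N_{k+1}$ for consecutive indices, which involve only $\alpha_k,\alpha_{k+1},\alpha_{k'},\alpha_{(k+1)'}$. Working inward from $k=1$ (pairing $1$ with $2n$), this should force $\alpha_1=\alpha_{2n}=0$, then $\alpha_2=\alpha_{2n-1}=0$, and so on. We expect this combinatorial step to be the main obstacle: the signs and the factor $2$ for the pair $(k,k')$ must be tracked carefully, and the non-negativity of $\alpha$ has to be used.

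A fallback for this step is to use only the two torus gradings and degree-$0$ elements. If $\alpha\neq0$ satisfies all the equations, the leading term still has a nonzero graded degree unless $\alpha_i=\alpha_{i'}$ for all $i$. We would then treat the case of products of the $x_ix_{i'}$ separately, showing directly that $x_1$ $q$-commutes with $x_1x_{1'}$ with factor $q^{2}$ up to lower terms.

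Once $\alpha=0$ is established for the leading monomial, the whole of $z$ lies in $F_0=\K$, so $Z(A)=\K$.

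As a consistency check with the paper's framework, $A$ is a bijective skew PBW extension over the field $\K$, hence prime by Lemma \ref{PrimoPBW}. Corollary \ref{PIprima3} would then also give that $A$ is not PI, in agreement with Theorem \ref{necesariaPI}. This is not needed for the proposition itself.
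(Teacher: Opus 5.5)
The gap is in the passage to $\mathsf{Gr}(A)$. Proposition~\ref{filtracion} concerns a skew PBW extension $\sigma(\K)\langle x_1,\dots,x_{2n}\rangle$, whose correction terms lie in $\K+\K x_1+\cdots+\K x_{2n}$. For $n\ge 2$, $A=\mathcal{O}_q(\mathfrak{sp}(\K^{2n}))$ is not of that form in these variables, because the correction term $(q^2-1)\sum_{k<i}q^{i-k}x_kx_{k'}$ in \eqref{identity2} is quadratic. This is why the paper only presents $A$ as an \emph{iterated} extension, and the same misidentification underlies your closing reference to Theorem~\ref{necesariaPI}.

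For the total-degree filtration, every defining relation of $A$ is homogeneous of degree $2$, so $\mathsf{Gr}(A)\cong A$. The statement ``the top-degree component of $z$ is central in $\mathsf{Gr}(A)$'' then gives no information, and the formula $\bar x_k\bar x^{\alpha}=q^{N_k(\alpha)}\bar x^{\alpha}\bar x_k$ is not available. Breaking ties lexicographically does not rescue this unless the order is chosen carefully. With the usual lex order ($x_1$ most significant) one has $x_kx_{k'}>x_ix_{i'}$ for $k<i$, so leading monomials are not multiplicative. For example, when $n=2$, $x_3x_2=q^2x_2x_3+(q^2-1)q\,x_1x_4$ has leading monomial $x_1x_4$, not $x_2x_3$.

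The missing ingredient is a filtration in which $x_kx_{k'}$ lies strictly below $x_ix_{i'}$ whenever $k<i$. The paper uses the weight $\omega(x_i)=i$, $\omega(x_{i'})=1$ for $1\le i\le n$, so that $\omega(x_kx_{k'})=k+1<i+1=\omega(x_ix_{i'})$. With this weight, $\mathsf{gr}^{\omega}(A)$ really is the quantum affine space with parameters $q$ and $q^2$, and $F^{\omega}_0(A)=\K$. Your principal-symbol argument then goes through unchanged.

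Once that is fixed, the step you flagged as the main obstacle is immediate, and you need no differences $N_k-N_{k+1}$. The equation $N_1(\alpha)=-(\alpha_2+\cdots+\alpha_{2n-1})-2\alpha_{2n}=0$ with $\alpha\in\mathbb{N}^{2n}$ forces $\alpha_2=\cdots=\alpha_{2n}=0$. Then $N_{2n}(\alpha)=2\alpha_1=0$. This is exactly the paper's use of the first and last rows of $H$.

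Since only $x_1$ and $x_{2n}$ are used, you can even bypass the filtration. Their commutation relations with the other generators carry no correction terms, so both are normal in $A$. In $A$ itself, $x_1x^{\alpha}=x^{\alpha+\mathbf{e}_1}$ while $x^{\alpha}x_1=q^{-N_1(\alpha)}x^{\alpha+\mathbf{e}_1}$. Comparing coefficients in $x_1z=zx_1$ therefore gives $z\in\K[x_1]$. Finally, $x_{2n}x_1^m=q^{2m}x_1^mx_{2n}$ together with $x_{2n}z=zx_{2n}$ gives $z\in\K$.
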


\begin{proof}
Take $A:=\mathcal{O}_q\bigl(\mathfrak{sp}(\K^{2n})\bigr)$ and, for $1\leq i\leq n$, define $\omega(i):=i$ while $\omega(i'):=1$, and extend this weight $\omega$ additively to standard monomials as follows:
\[\omega(x_1^{\alpha_1}\cdots x_{2n}^{\alpha_{2n}}):=\sum_{i=1}^{2n}\alpha_{i}\omega(x_i).\]
Endow this algebra with the weighted filtration $\{F^{\omega}_{m}(A)\}_{m\geq 0}$ determined by $\omega$: 
\[ F^{\omega}_m(A):= {}_{\K} \left\langle x_1^{\alpha_1}\cdots x_{2n}^{\alpha_{2n}} \mid \omega(x_1^{\alpha_1}\cdots x_{2n}^{\alpha_{2n}})\leq m \right\rangle. \] 
In particular, note that in the relation (\ref{identity2}) the weighted graded of $x_kx_{k'}$, for $1\leq k<i$, is strictly less than that of $x_ix_{i'}$, so
the associated graded algebra \(\mathsf{gr}^{\omega}(A)\) is generated by the principal symbols $\overline{x}_1,\ldots,\overline{x}_{2n}$,  and its defining relations are obtained by retaining the weighted highest-degree terms of the defining relations of \(A\). Hence 
\begin{align}\label{relationsgraded}
\overline{x}_j\overline{x}_i& = q\,\overline{x}_i\overline{x}_j, \qquad i<j,\quad j\neq i', \\
\overline{x}_{i'}\overline{x}_i &= q^2\,\overline{x}_i\overline{x}_{i'}, \qquad i<i',\notag
\end{align}
with  $i' = 2n - i + 1$, for $1 \le i \le 2n$, see \cite[\S 4]{Zhang2000} where this algebra is called the \textit{associated quasipolynomial algebra} of $\mathcal{O}_q(\mathfrak{sp}(\K^{2n}))$, and is denoted by $\overline{\mathcal{O}_q(\mathfrak{sp}(\K^{2n}))}$. Thus \(\mathsf{gr}^{\omega}(A)\) is a quantum affine space. We first prove that  $Z(\mathsf{gr}^{\omega}(A))=\K$.  For \(\alpha=(\alpha_1,\ldots,\alpha_{2n})\in \mathbb{N}^{2n}\), write 

\[ \overline{x}^{\alpha} := \overline{x}_1^{\alpha_1}\cdots \overline{x}_{2n}^{\alpha_{2n}},\]
and, for \(1\leq i<j\leq 2n\), define 
\[ \varepsilon_{ij}:= \begin{cases} 2, & j=i',\\ 1, & j\neq i'. \end{cases} \] 
Thus  $\overline{x}_j\overline{x}_i = q^{\varepsilon_{ij}} \overline{x}_i\overline{x}_j$, for $i<j$.  The latter allows us to introduce a skew-symmetric integer matrix 
\[ H=(h_{rs})_{1\leq r,s\leq 2n}\]
such that 
$\overline{x}_r\overline{x}_s = q^{h_{rs}} \overline{x}_s\overline{x}_r$.
Explicitly, for \(r>s\), 

\[ h_{rs}:= \begin{cases} 2, & r=s',\\ 1, & r\neq s', \end{cases} \qquad h_{sr}:=-h_{rs}, \qquad h_{rr}:=0. \] 
We claim that, in order to determine the center of \(\mathsf{gr}^{\omega}(A)\), it is enough to determine its central standard  monomials. Indeed, let $f=\sum_{\alpha} c_\alpha \overline{x}^{\alpha}$, with  $c_{\alpha}\in \K$, be a non-zero central element of \(\mathsf{gr}^{\omega}(A)\). Fix \(\ell\in\{1,\ldots,2n\}\). From (\ref{relationsgraded}) it follows that
\[ \overline{x}_{\ell}\overline{x}^{\alpha} = q^{\lambda_{\ell}(\alpha)}\overline{x}^{\alpha}\overline{x}_{\ell}, \] 
where 
$\lambda_{\ell}(\alpha):=\sum_{s=1}^{2n} h_{\ell s}\alpha_s$.  
Since \(f\) is central, one has \(\overline{x}_{\ell} f=f\overline{x}_{\ell}\). Therefore, 
\[ \sum_{\alpha} c_{\alpha} q^{\lambda_{\ell}(\alpha)} \overline{x}^{\alpha}\overline{x}_{\ell} = \sum_{\alpha} c_{\alpha} \overline{x}^{\alpha}\overline{x}_{\ell}. \] 
Equivalently, 
\[ \sum_{\alpha} c_{\alpha}\bigl(q^{\lambda_{\ell}(\alpha)}-1\bigr) \overline{x}^{\alpha}\overline{x}_{\ell} = 0. \] 
Note that, for each \( \alpha\), the product \(\overline{x}^{\alpha}\overline{x}_{\ell}\) is a non-zero scalar multiple of the standard monomial \(\overline{x}^{\alpha+\mathbf{e}_{\ell}}\), where \(\mathbf{e}_{\ell}\) is the \(\ell\)-th vector of the canonical basis of \(\mathbb{N}^{2n}\). Since the monomials  $\overline{x}^{\alpha+\mathbf{e}_{\ell}}$  are distinct and form part of the $\K$-basis for $\mathsf{gr}^{\omega}(A)$, they are \(\K\)-linearly independent and  $c_{\alpha}\bigl(q^{\lambda_{\ell}(\alpha)}-1\bigr)=0$, for every non-zero $c_{\alpha}$ in the presentation of $f$. Thus $q^{\lambda_{\ell}(\alpha)}=1$ for each of these coefficients. As \(q\) is not a root of unity, it follows that $\lambda_{\ell}(\alpha)=0$. This holds for every \(1\leq \ell\leq 2n\). Hence, each monomial \(\overline{x}^{\alpha}\) appearing in the expansion of \(f\) commutes with every generator \(\overline{x}_{\ell}\). Thus each such monomial is central. Consequently, every central element of \(\mathsf{gr}^{\omega}(A)\) is a \(\K\)-linear combination of central monomials. Therefore, in this quantum affine space, the center is completely determined by its central monomials. 

On the other hand, a monomial \(\overline{x}^{\alpha}\) is central if and only if it commutes with each generator \(\overline{x}_{\ell}\). This condition is equivalent to 
\[ q^{(H\alpha)_{\ell}}=1, \quad \text{for all } \ell=1,\ldots,2n. \]
Since \(q\) is not a root of unity, this is equivalent to $H\alpha=\mathbf{0}$. 
We show that this latter implies \(\alpha=\mathbf{0}\). Define $T:=\alpha_1+\cdots+\alpha_{2n}$.  The first row of \(H\alpha=\mathbf{0}\) gives \[ -\alpha_2-\alpha_3-\cdots-\alpha_{2n-1}-2\alpha_{2n}=0. \] Equivalently, $T=\alpha_1-\alpha_{2n}$. Similarly, the last row of $H\alpha=\mathbf{0}$ gives \[ 2\alpha_1+\alpha_2+\cdots+\alpha_{2n-1}=0; \] that is, $T= \alpha_{2n}-\alpha_1$. Therefore, $T=-T$
and hence $T=\mathbf{0}$. Since each $\alpha_i$ is a natural number, $T=\mathbf{0}$ implies $\alpha_1=\cdots=\alpha_{2n}=0.$ Thus the only central monomial of $\mathsf{gr}^{\omega}(A)$ is $1$. Consequently, $Z(\mathsf{gr}^{\omega}(A))=\K$. Now, let \(z\in Z(A)\) be a non-zero central element of $A$. If \(z\notin \K\), take \(d\geq 1\) minimal such that $z\in F^{\omega}_d(A)$.  Let \[ \overline{z}:=z+F^{\omega}_{d-1}(A)\in F^{\omega}_d(A)/F^{\omega}_{d-1}(A) \] be the principal symbol of $z$ in $\mathsf{gr}(A)$. Since $z$ commutes with every generator $x_i$, its principal symbol $\overline{z}$ commutes with every $\overline{x}_i$. In consequence, one obtains $\overline{z}\in Z(\mathsf{gr}^{\omega}(A))$. But \(Z(\mathsf{gr}^{\omega}(A))=\K\), and every non-zero element of \(\K\) has degree \(0\), contradicting \(d\geq 1\). Therefore \(z\in \K\) and $Z(A)=\K$.
\end{proof}
\begin{remark} An alternative proof of Proposition \ref{centerno-root} using primitive ideals can be found in \cite[Corollary 7.2]{Oh1995}.
\end{remark}

\begin{theorem}\label{PI-O_qSP(K2n)}
$\mathcal{O}_q(\mathfrak{sp}(\K^{2n}))$ satisfies the PI property if and only if 
$q$ is a root of unity.
\end{theorem}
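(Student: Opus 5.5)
The proof splits into the two implications, each resting on results already stated.

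For the forward direction I argue by contraposition. Suppose $q$ is not a root of unity. Proposition~\ref{centerno-root} then gives $Z\bigl(\mathcal{O}_q(\mathfrak{sp}(\K^{2n}))\bigr)=\K$. The algebra is a bijective skew PBW extension over the field $\K$: it is an iterated extension over $\K$ with invertible scalar parameters $q$ and $q^2$, and all the twisting maps are identities on $\K$. Since $\K$ is a prime $\K$-algebra, Lemma~\ref{PrimoPBW} shows the algebra is prime, and Corollary~\ref{PIprima3} then shows it does not satisfy the PI property.

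There is a subtlety in presenting it over $\K$ with generators $x_1,\dots,x_{2n}$. The relation~\eqref{identity2} has quadratic lower-order terms $x_kx_{k'}$, which lie outside $\K+\sum_t \K x_t$. So the one-step presentation $\sigma(\K)\langle x_1,\dots,x_{2n}\rangle$ need not fit Definition~\ref{pbwt}. The cleanest fix is to use the iterated presentation displayed before the theorem and view the algebra as a bijective skew PBW extension $\sigma(R)\langle x_n,x_{n+1}\rangle$, with $R$ the previous stage. By induction using Lemma~\ref{PrimoPBW}, $R$ is prime, so Lemma~\ref{PrimoPBW} and Corollary~\ref{PIprima3} apply with this $R$ and $n=2$ variables.

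As a backup, Theorem~\ref{PIprima2}'s argument only needs primeness, $Z=\K$ and infinite $\K$-dimension. I would cite that primeness directly, since the algebra is a domain: its associated graded $\mathsf{gr}^{\omega}(A)$ is a quantum affine space, hence a domain. Posner's Theorem then forces finite $\K$-dimension, which contradicts the PBW basis.

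For the converse, suppose $q$ is an $m$-th root of unity. By \cite[Lemma 4.2]{Zhang2000}, each $x_i^m$ is central. Let $Z_0:=\K[x_1^m,\dots,x_{2n}^m]\subseteq Z(A)$. The standard monomials $x_1^{\alpha_1}\cdots x_{2n}^{\alpha_{2n}}$ form a $\K$-basis. Writing $\alpha_i=a_i+mt_i$ with $0\le a_i<m$ and pulling the central factors out, as in the proof of Theorem~\ref{PI-semi-comm}, shows that $A$ is generated as a $Z_0$-module by the finite set $\{x^{a}: 0\le a_i<m\}$. By \cite[\S 13.1.13, Corollary]{mcconnell}, $A$ is PI.

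The main obstacle is the forward direction's bookkeeping: making sure the hypotheses of Lemma~\ref{PrimoPBW} and Corollary~\ref{PIprima3} (bijectivity, primeness of the base) are genuinely met for the chosen skew PBW presentation. The domain argument via $\mathsf{gr}^{\omega}(A)$ plus Posner provides a safe fallback.
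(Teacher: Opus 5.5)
Your proposal is correct and follows the paper's proof. In the forward direction you combine Proposition~\ref{centerno-root}, primeness obtained by applying Lemma~\ref{PrimoPBW} repeatedly along the iterated presentation, and Corollary~\ref{PIprima3}; your point that the one-step presentation over $\K$ fails Definition~\ref{pbwt} because of the quadratic terms is exactly why that iteration is needed. In the converse you use the central powers $x_i^m$ from \cite[Lemma 4.2]{Zhang2000} and finite generation over $\K[x_1^m,\dots,x_{2n}^m]$, just as the paper does.
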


\begin{proof}
Suppose that $\mathcal{O}_q(\mathfrak{sp}(\K^{2n}))$ satisfies the PI property 
and that $q$ is not a root of unity. Applying Lemma \ref{PrimoPBW} repeatedly, one obtains that
$\mathcal{O}_q(\mathfrak{sp}(\K^{2n}))$ is a prime ring; by the proposition 
above, its center is trivial, contradicting Corollary \ref{PIprima3}.

Conversely, suppose that $q$ is an $m$-th root of unity. By 
\cite[Lemma 4.2]{Zhang2000}, the power $x_i^{m}$ is central in 
$\mathcal{O}_q(\mathfrak{sp}(\K^{2n}))$, for all $i = 1,\dots,2n$, so
\[
Z_0 := \K[x_1^{m},\dots,x_{2n}^{m}] \subseteq 
Z\bigl(\mathcal{O}_q(\mathfrak{sp}(\K^{2n}))\bigr).
\]
Therefore $\mathcal{O}_q(\mathfrak{sp}(\K^{2n}))$ is finitely generated as a 
$Z_0$-module and, by \cite[\S 13.1.13, Corollary]{mcconnell}, satisfies the PI 
property.
\end{proof}

\subsection*{Mixed algebra $D_h$}

Let $h \in \K$. The bijective-type Ore extension
\[
D_h := \K[t]\bigl[x;\,\mathrm{id}_{\K[t]},\,\tfrac{d}{dt}\bigr]
\bigl[x_h;\,\sigma_h,\,\delta_h\bigr],
\]
where $\sigma_h(x) := x$, is a skew PBW extension of $\K[t]$. Specifically,
$D_h \cong \sigma(\K[t])\langle x, x_h\rangle$.

\begin{proposition}
The mixed algebra $D_h$ does not satisfy the PI property.
\end{proposition}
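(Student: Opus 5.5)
The plan follows the pattern used above for the extended Weyl algebra $B_n(\K)$ and the shift algebra $S_h$: show that $D_h$ is a prime bijective skew PBW extension with trivial center, and then apply Corollary~\ref{PIprima3}.

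\textbf{Primeness and bijectivity.} The base ring $\K[t]$ is an integral domain, hence prime. In the presentation $D_h\cong\sigma(\K[t])\langle x,x_h\rangle$:
\begin{itemize}
\item the endomorphism attached to $x$ is $\mathrm{id}_{\K[t]}$;
\item the endomorphism attached to $x_h$ is $p(t)\mapsto p(t-h)$, which is an automorphism of $\K[t]$;
\item the coefficient $c_{1,2}$ in the relation between $x$ and $x_h$ equals $1$, since $\sigma_h(x)=x$.
\end{itemize}
So $D_h$ is bijective, and Lemma~\ref{PrimoPBW} shows that $D_h$ is prime.

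\textbf{Trivial center.} The remaining task is to show $Z(D_h)=\K$. I would first look for this in \cite{LezamaVenegas2020}, where the mixed algebra appears in the list of examples (presumably the companion of Example~1.9-(b), used above for $S_h$), and cite it if it is there.

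If it has to be proved directly, I would take a central $z=\sum_{i,j}p_{ij}(t)\,x^i x_h^j$ and compare coefficients in three commutators.
\begin{itemize}
\item From $[z,t]=0$: since $xt=tx+1$ and $x_h t=(t-h)x_h$ (when $\delta_h=0$), we get $x^i x_h^j t=(t-jh)x^i x_h^j+i\,x^{i-1}x_h^j$. Look at the term with largest $i$. For $h\neq0$, in characteristic zero, this forces $j=0$ in the leading part. An induction on $i$ then forces $i=0$, so $z\in\K[t]$.
\item From $[z,x]=0$: an element $p(t)\in\K[t]$ satisfies $[x,p]=p'(t)=0$, so $p$ is constant.
\item For $h=0$, where $\sigma_h=\mathrm{id}$: the argument changes because $x_h$ then commutes with $t$. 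In that case I would first compute $\delta_h$ from the definition of $D_h$ and use it in the $[z,x_h]$ commutator instead.
\end{itemize}
In each case $z\in\K$.

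\textbf{Main obstacle.} This is the center computation, especially pinning down $\delta_h$ and handling the degenerate value $h=0$. If $\delta_h=0$ and $h=0$, then $x_h$ itself would be central, and the center would not be trivial.

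A fallback that avoids the center entirely: the subalgebra generated by $t$ and $x$ is the Weyl algebra $A_1(\K)$. It is a simple domain of infinite dimension with center $\K$, so by Posner's Theorem (Theorem~\ref{posner}) it is not PI. Since a subalgebra of a PI-algebra is PI, $D_h$ cannot be PI either. I would present this as the main argument, since it works for every $h$ with no assumption on $\sigma_h,\delta_h$. The corollary-based argument above would then serve as the illustration of Corollary~\ref{PIprima3}, in the same style as the preceding examples.
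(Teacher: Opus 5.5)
Your first argument is the paper's proof: primeness of the bijective extension via Lemma~\ref{PrimoPBW}, $Z(D_h)=\K$ cited from Example~1.9-(c) of \cite{LezamaVenegas2020}, then Corollary~\ref{PIprima3}.

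The argument you propose to lead with is genuinely different, and it is correct. It uses three facts:
\begin{enumerate}
\item $A_1(\K)=\K[t][x;\mathrm{id},\tfrac{d}{dt}]$ sits inside $D_h$ as the first step of the iterated Ore presentation.
\item $A_1(\K)$ is a prime algebra with center $\K$ and infinite $\K$-dimension. Hence Posner's Theorem rules out a polynomial identity; equivalently, apply Corollary~\ref{PIprima3} to $A_1(\K)$ viewed as a bijective skew PBW extension of $\K[t]$.
\item Identities pass to subalgebras.
\end{enumerate}

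What this buys is independence from any computation of $Z(D_h)$ and from the exact form of $\sigma_h,\delta_h$. In particular it covers $h=0$. There, as you correctly observed, $x_h$ commutes with $t$ and $x$ when $\delta_h=0$, so $Z(D_0)\supseteq\K[x_h]\neq\K$. The paper's argument therefore does not apply at $h=0$, even though the statement is still true. The paper's route has a different merit: it fits the uniform pattern ``prime plus trivial center implies not PI'' used for the surrounding examples.

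One correction to your claim that the fallback works ``with no assumption'': it needs $\mathrm{char}(\K)=0$, the standing hypothesis of this subsection. In characteristic $p$ the Weyl algebra is PI (Example~\ref{Weylcp}). In fact, taking $\sigma_h(t)=t-h$, $D_h$ itself is then PI by Theorem~\ref{Iteradas-3.3}, because $\sigma_h^p$ fixes $Z(A_1(\K))=\K[t^p,x^p]$. So the characteristic-zero hypothesis is essential to the statement, not just to your proof of it.
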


\begin{proof}
By Example~1.9-(c) of \cite{LezamaVenegas2020}, one has that $Z(D_h) = \K$. Since $\K[t]$ is  an integral domain and hence a prime ring, Lemma \ref{PrimoPBW} implies that 
$D_h$ is a prime skew PBW extension. By Corollary \ref{PIprima3}, this algebra does 
not satisfy the PI property.
\end{proof}

\subsection*{$q$-Heisenberg algebras $H_n(q)$}
\noindent
Let $\K$ be a field and $q \in \K^{*}$. The $q$-Heisenberg algebra $H_n(q)$ is 
the $\K$-algebra generated by $$x_1,\dots,x_n,\; y_1,\dots,y_n,\; z_1,\dots,z_n$$ 
subject to the relations
\[
\renewcommand{\arraystretch}{1.15}
\begin{array}{@{}l@{\qquad}l@{\qquad}l@{}}
x_j x_i = x_i x_j, & y_j y_i = y_i y_j, & z_j z_i = z_i z_j,\\
y_j x_i = x_i y_j, & z_i y_j = y_j z_i, & z_j x_i = x_i z_j,\\
y_i x_i = q\,x_i y_i, & z_i y_i = q\,y_i z_i, &
z_i x_i = q^{-1} x_i z_i + y_i,
\end{array}
\qquad
\begin{array}{l}
i \ne j,\\
1 \le i < j \le n.
\end{array}
\]
It is not difficult to show that  $H_n(q) \cong \sigma\!\bigl(\K[y_1,\dots,y_n]\bigr)\langle 
x_1,\dots,x_n;\, z_1,\dots,z_n\rangle$.

\begin{theorem}\label{PI-Hnq}
Let $q \in \K^{*}$. The algebra $H_n(q)$ satisfies the PI property if and only 
if $q$ is a root of unity.
\end{theorem}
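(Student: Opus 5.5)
For the forward direction I would pass to a subalgebra, as in Theorem \ref{PI-semi-comm} and Theorem \ref{PI-Uqsl2}. Assume $H_n(q)$ satisfies the PI property. The subalgebra $B$ generated by $x_1$ and $y_1$ satisfies only the relation $y_1x_1 = q\,x_1y_1$. Because $H_n(q)$ is a skew PBW extension, its standard monomials are linearly independent, so the monomials $x_1^ay_1^b$ are linearly independent in $H_n(q)$. Hence $B\cong \mathcal{O}_q(\K^2)$. As a subalgebra of a PI-algebra, $B$ is PI, and \cite[\S 7.1]{DeConciniProcesi1993} then forces $q$ to be a root of unity.

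An alternative route, closer to the Ore-extension viewpoint used for $U_q(\mathfrak{sl}(2,\K))$, is to write $B\cong \K[y_1][x_1;\sigma]$ with $\sigma(y_1)=q^{-1}y_1$. Proposition 2.5 of \cite{LeroyMatczuk2007} then says $\sigma$ has finite order on the commutative ring $\K[y_1]$, which again gives that $q$ is a root of unity. Either argument settles this direction.

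For the converse, suppose $q$ is a primitive $m$-th root of unity. If $m=1$, then $q=1$ and more care is needed, so I treat that case separately below. For $m\ge 2$ I would show that $x_i^m$, $y_i^m$ and $z_i^m$ are central for every $i$. Commutation with generators carrying a different index is immediate from the relations, so it suffices to check each $i$ separately.
\begin{itemize}
\item From $y_ix_i=q\,x_iy_i$ and $z_iy_i=q\,y_iz_i$ we get $y_i x_i^m=q^m x_i^m y_i = x_i^m y_i$, and similarly $y_i^m$ commutes with $x_i$ and $z_i$.
\item For $z_i$ and $x_i$, the element $z_ix_i - q^{-1}x_iz_i=y_i$ is a $q$-commuting term. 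Using $z_i$ as a skew derivation on $\K[y_i][x_i;\sigma]$, I compute $z_ix_i^m = q^{-m}x_i^mz_i + c_m x_i^{m-1}y_i$ with $c_m=\sum_{k=0}^{m-1}q^{-k}q^{\,\cdot}$. The exact exponents need checking, but $c_m$ should be a Gaussian-type sum $\sum_{k=0}^{m-1}(q^{\pm1})^{k}=0$.
\item Symmetrically, I would get $z_i^m x_i = x_iz_i^m$.
\end{itemize}
Alternatively, $\sigma(\K[y_1,\dots,y_n])$ with these relations fits the pattern of Lemma \ref{centropbw2v}, so I could read the formulas from the one-variable-pair case.

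Once these powers are central, $Z_0:=\K[x_i^m,y_i^m,z_i^m]\subseteq Z(H_n(q))$. By the PBW basis, $H_n(q)$ is generated as a $Z_0$-module by the finitely many monomials with all exponents less than $m$, and \cite[\S 13.1.13, Corollary]{mcconnell} gives the PI property.

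I expect the main obstacle to be the mixed relation $z_ix_i=q^{-1}x_iz_i+y_i$, and especially the case $q=1$. When $q=1$ the relations give $z_ix_i-x_iz_i=y_i$ with $y_i$ central, which is a Heisenberg Lie algebra enveloping algebra. In characteristic zero this algebra is not PI: it maps onto the Weyl algebra by sending $y_i\mapsto 1$, and the Weyl algebra is not PI. So the statement as written is false for $q=1$, and I would read it as requiring $q$ to be a nontrivial root of unity, $q\neq 1$. This parallels Theorem \ref{PI9}(ib), where $q_i=1$ with a nonzero extra term gives trivial center. For $q\neq1$ the key computation is the vanishing of the coefficient $c_m$, which I would verify by induction on $m$ together with the identity $\sum_{k=0}^{m-1}q^{k}=0$.
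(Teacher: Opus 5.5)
\paragraph{Verdict.} There is a genuine gap: the converse fails at $q=-1$, and your corrected statement, which excludes only $q=1$, is still false.

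\paragraph{What is right.} Your forward direction is correct and is exactly the paper's argument, namely the subalgebra $\K\langle x_i,y_i\rangle\cong\mathcal{O}_q(\K^2)$. Your objection at $q=1$ is also correct in characteristic zero, which is the standing assumption of that subsection: $H_1(1)$ is the enveloping algebra of the Heisenberg Lie algebra and maps onto $A_1(\K)$. The paper's converse fails there too. It takes $l=\mathsf{ord}(q)$ and asserts that $x_i^l,y_i^l,z_i^l$ are central, which for $l=1$ would make $H_n(1)$ commutative.

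\paragraph{The gap.} The coefficient you guessed is wrong. From $z_ix_i=q^{-1}x_iz_i+y_i$ and $y_ix_i=qx_iy_i$ one gets $c_1=1$ and $c_{m+1}=q^{-m}+q\,c_m$, hence
\[
z_ix_i^{m}=q^{-m}x_i^{m}z_i+q^{1-m}\bigl(1+q^{2}+\cdots+q^{2(m-1)}\bigr)x_i^{m-1}y_i .
\]
The same coefficient appears in $z_i^mx_i=q^{-m}x_iz_i^m+c_m\,y_iz_i^{m-1}$. This is a geometric sum in $q^{2}$, not in $q^{\pm1}$.
\begin{itemize}
\item It vanishes when $\mathsf{ord}(q)=m\ge 3$.
\item For $q=-1$ it equals $(-1)^{m-1}m\neq 0$ for every $m$; for example $z_ix_i^2=x_i^2z_i-2x_iy_i$. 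So no power of $x_i$ or $z_i$ is central.
\end{itemize}

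\paragraph{No other method rescues $q=-1$.} For every $q$ one has $z_i(x_iy_i)-(x_iy_i)z_i=y_i^2$, and $y_i^2$ is central exactly when $q^2=1$. Invert $y_i^2$ in the domain $H_n(-1)$. Then $[z_i,\,x_iy_i^{-1}]=1$, so this central localization contains a copy of the simple, non-PI Weyl algebra $A_1(\K)$. The localization inherits any multilinear identity of $H_n(-1)$. Hence $H_n(-1)$ is not PI in characteristic zero.

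\paragraph{Correct statement.} In characteristic zero, $H_n(q)$ is PI if and only if $q$ is a root of unity with $q^2\neq 1$. Under that hypothesis your centrality computation and the finiteness over $\K[x_i^m,y_i^m,z_i^m]$ go through. In characteristic $p>0$ the original statement does hold, since then $c_{2p}=0$ for $q=\pm1$.
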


\begin{proof}
Suppose that $q$ is a root of unity and let $l = \mathsf{ord}(q)$. By 
Theorem 3.5 of \cite{LezamaVenegas2020}, the elements $x_i^l$, $y_i^l$, and 
$z_i^l$ are central in $H_n(q)$, so
\[
Z_0 := \K\bigl[x_1^l,\dots,x_n^l,\, y_1^l,\dots,y_n^l,\, 
z_1^l,\dots,z_n^l\bigr] \subseteq Z(H_n(q)).
\]
Every element of $H_n(q)$ can be written as a $Z_0$-linear combination of 
monomials in the finite set
\[
\bigl\{\, x_1^{a_1}\cdots x_n^{a_n}\, y_1^{b_1}\cdots y_n^{b_n}\, 
z_1^{c_1}\cdots z_n^{c_n} \;\bigm|\; 0 \le a_i, b_i, c_i < l \,\bigr\}.
\]
Hence $H_n(q)$ is finitely generated as a $Z_0$-module and, by 
 \cite[\S 13.1.13, Corollary]{mcconnell}, this algebra satisfies the PI property.

Conversely, suppose that $H_n(q)$ satisfies the PI property. For each 
$i \in \{1,\dots,n\}$, the subalgebra $A_i := \K\langle x_i, y_i\rangle \subset 
H_n(q)$ is isomorphic to the quantum plane $\mathcal{O}_q(\K^2)$. Since $H_n(q)$ 
is a PI ring, $A_i$ is as well and, by \cite[\S~7.1]{DeConciniProcesi1993}, the parameter $q$ 
is a root of unity.
\end{proof}

\subsection*{Woronowicz algebra $W_\nu(\mathfrak{sl}(2,\K))$}
\noindent
This algebra was introduced by Woronowicz in \cite{Woronowicz1987} and is 
generated by $x, y, z$ subject to the relations
\[
xz - \nu^{4}zx = (1+\nu^{2})x, \qquad
xy - \nu^{2}yx = \nu z, \qquad
zy - \nu^{4}yz = (1+\nu^{2})y,
\]
where $\nu \in \K^{*}$ is not a root of unity. One has 
$W_\nu\!\bigl(\mathfrak{sl}(2,\K)\bigr)$ is isomorphic to a bijective skew PBW extension of $\K$ in the variables $x$, $y$ and $z$.
\begin{proposition}
The Woronowicz algebra $W_\nu(\mathfrak{sl}(2,\K))$ does not satisfy the PI 
property.
\end{proposition}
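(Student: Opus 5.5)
The plan follows the template of the preceding propositions: show that $W_\nu(\mathfrak{sl}(2,\K))$ is prime, show that its center is $\K$, and then apply Corollary \ref{PIprima3}. As an independent check, Theorem \ref{necesariaPI} gives a second route.

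For primeness, $W_\nu(\mathfrak{sl}(2,\K))$ is a bijective skew PBW extension of the field $\K$. A field is prime, so Lemma \ref{PrimoPBW} shows that the algebra is prime. Hence it suffices to know that $Z(W_\nu(\mathfrak{sl}(2,\K)))=\K$ when $\nu$ is not a root of unity. The center of this algebra is among those collected by Lezama and Venegas in \cite{LezamaVenegas2020}, and I would cite that computation. If a self-contained argument is preferred, I would copy the proof of Proposition \ref{centerno-root}. Filter by total degree. Using Proposition \ref{filtracion}, the associated graded algebra is the quantum affine space with relations
\[
\overline{x}\,\overline{z}=\nu^{4}\overline{z}\,\overline{x},\qquad
\overline{x}\,\overline{y}=\nu^{2}\overline{y}\,\overline{x},\qquad
\overline{z}\,\overline{y}=\nu^{4}\overline{y}\,\overline{z}.
\]
A monomial $\overline{x}^a\overline{y}^b\overline{z}^c$ is central exactly when the corresponding exponent conditions hold. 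Since $\nu$ is not a root of unity, these conditions become the linear equations $2b+4c=0$, $2a-4c=0$ and $4a+4b=0$, up to signs. Over $\mathbb{N}$ they force $a=b=c=0$. By the same monomial-by-monomial argument as before, the center of the graded algebra is $\K$. The principal-symbol argument then lifts this to $Z(W_\nu(\mathfrak{sl}(2,\K)))=\K$. With primeness and trivial center in hand, Corollary \ref{PIprima3} finishes the proof.

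The main obstacle is getting the signs in the exponent equations right, which depends on orienting the relations consistently with the ordering of the PBW basis. A faster route avoids the center entirely. The parameter $c_{i,j}$ attached to the pair $(x,y)$ is $\nu^{\pm 2}$, and it is not a root of unity. Theorem \ref{necesariaPI} then says directly that the algebra cannot be PI. I would present this as the main argument and keep the center-based argument as a remark consistent with the section's method.
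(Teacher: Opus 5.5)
Your proposal is correct. Its first half is exactly the paper's proof: primeness from Lemma \ref{PrimoPBW} (since $\K$ is a field), $Z(W_\nu(\mathfrak{sl}(2,\K)))=\K$ from \cite[Theorem 3.2]{LezamaVenegas2020}, then Corollary \ref{PIprima3}.

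The argument you propose to make primary is genuinely different. Order the generators $x<y<z$. The relation $xy-\nu^{2}yx=\nu z$ becomes $yx=\nu^{-2}xy-\nu^{-1}z$, so $c_{1,2}=\nu^{-2}$. This is not a root of unity, and Theorem \ref{necesariaPI} excludes the PI property at once.

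This route needs neither primeness nor any information about the center. It only uses two facts: a polynomial identity passes to the associated graded algebra (here a quantum affine space), and the quantum-plane criterion. It also shows more generally that every skew PBW extension of $\K$ with these leading coefficients fails to be PI, whatever its lower-order terms. The paper's route instead depends on an external center computation. It is chosen to illustrate Corollary \ref{PIprima3}, and it yields the extra structural fact that the center is trivial.

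Your self-contained center sketch is also sound, and the sign issue you worry about is harmless. Commuting $\overline{x}^a\overline{y}^b\overline{z}^c$ with $\overline{x}$ gives $\nu^{-(2b+4c)}=1$, which forces $b=c=0$ over $\mathbb{N}$. Commuting $\overline{x}^a$ with $\overline{y}$ then gives $\nu^{-2a}=1$, so $a=0$. The other two conditions are actually $2a+4c=0$ and $a=b$, not the ones you wrote, but they are not needed.
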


\begin{proof}
Since $W_\nu(\mathfrak{sl}(2,\K)) \cong \sigma(\K)\langle x,y,z\rangle$ is a 
bijective skew PBW extension over the field $\K$, Lemma~\ref{PrimoPBW} implies 
it is a prime ring. By Theorem~3.2 of \cite{LezamaVenegas2020}, the center of
$W_\nu(\mathfrak{sl}(2,\K))$ is trivial. So, by Corollary \ref{PIprima3}, it does 
not satisfy the PI property.
\end{proof}

\subsection*{The algebra $U$}

The algebra $U$, defined in Example 2.7.7 of \cite{Berger1992}, is the $\C$-algebra 
generated by $x_i, y_i, z_i$, for $1 \le i \le n$, 
subject to the relations

\begin{align*}
x_j x_i &= x_i x_j, &y_j y_i = y_i y_j,  \qquad &z_j z_i= z_i z_j, 
& 1 \le i,j \le n,\\[2pt]
y_j x_i &= q^{\delta_{ij}}\, x_i y_j,  
&z_j x_i = q^{-\delta_{ij}}\, x_i z_j, &   & 1 \le i,j \le n,\\[2pt]
z_j y_i &= y_i z_j, &  &   &i \ne j,\\[2pt]
z_i y_i &= q^{2}\, y_i z_i - q^{2}\, x_i^{2}, &  &  & 1 \le i \le n,
\end{align*}

where $q \in \mathbb{C}^*$. Note that $U$ is a bijective skew PBW extension of 
$\mathbb{C}[x_1,\dots,x_n]$:
\[
U \cong \sigma\!\bigl(\mathbb{C}[x_1,\dots,x_n]\bigr)
\langle y_1,\dots,y_n;\; z_1,\dots,z_n\rangle.
\]
In \cite{LezamaVenegas2020} the following important result about $Z(U)$ was proved.
\begin{theorem}[{\cite[ Theorem 3.4]{LezamaVenegas2020}}]
If $q$ is not a root of unity, then $Z(U) = \C$. If $q$ is a root of unity of 
order $l \ge 2$, then $x_1^l,\dots,x_n^l$ are central elements of $U$ and 
$\C[x_1^l,\dots,x_n^l] \subseteq Z(U)$.
\end{theorem}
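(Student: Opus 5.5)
The second assertion is a direct computation, and I would prove it first. Suppose $q$ has order $l\ge 2$. Since $x_jx_i=x_ix_j$, each $x_i^l$ commutes with all $x_j$. From $y_jx_i=q^{\delta_{ij}}x_iy_j$, an induction on $m$ gives $y_jx_i^m=q^{m\delta_{ij}}x_i^my_j$. Likewise $z_jx_i^m=q^{-m\delta_{ij}}x_i^mz_j$. For $m=l$ both scalars equal $1$. Hence $x_i^l$ commutes with every generator, and $\C[x_1^l,\dots,x_n^l]\subseteq Z(U)$.

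For the first assertion, assume $q$ is not a root of unity and let $f\in Z(U)$. I would follow the scheme of Proposition \ref{centerno-root}, with one extra step.

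\emph{Step 1 (grading by $x_i$).} The defining relations are homogeneous for the $\mathbb{Z}^n$-grading $\deg y_i=\mathbf e_i$, $\deg z_i=-\mathbf e_i$, $\deg x_i=0$. The relation $z_iy_i=q^2y_iz_i-q^2x_i^2$ has degree $0$ on both sides. Each $x_i$ is normal: $x_iy_j=q^{-\delta_{ij}}y_jx_i$ and $x_iz_j=q^{\delta_{ij}}z_jx_i$. So on a homogeneous component $f_k$ of degree $k\in\mathbb{Z}^n$ we get $x_if_k=q^{-k_i}f_kx_i$. Since $U$ is a domain (Lemma \ref{PrimoPBW}, or directly as an iterated Ore extension) and $x_if=fx_i$, every component with $k\neq 0$ vanishes. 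Thus $f$ lies in the span of standard monomials $x^ay^bz^c$ with $b=c$.

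\emph{Step 2 (associated graded).} Filter by total degree. The graded ring is the quantum affine space in which $\overline z_i\overline y_i=q^2\overline y_i\overline z_i$, the other relations being unchanged. As in Proposition \ref{centerno-root}, its center is spanned by central monomials. Computing exponents, $\overline x^a\overline y^b\overline z^c$ is central exactly when $b_i=c_i$ and $a_i=2b_i=2c_i$ for all $i$. So the graded center is $\C[w_1,\dots,w_n]$ with $w_i=\overline x_i^2\overline y_i\overline z_i$. It is \emph{not} trivial, so the leading-symbol argument of Proposition \ref{centerno-root} does not conclude by itself. This is the main obstacle.

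\emph{Step 3 (eliminating $w_i$).} For $u_i:=x_i^2y_iz_i\in U$ one computes
\[
y_iu_i-u_iy_i=q^2x_i^2y_i\bigl(y_iz_i-q^{-2}z_iy_i\bigr)=q^4x_i^4y_i\neq 0.
\]
I would refine the filtration so that the correction term is detected. Order monomials first by total degree, and then, inside a fixed total degree, by the number of $y$'s and $z$'s, so that $x$-heavy terms are lower. Now write $f$ as a polynomial in the $u_i$ over the span of lower terms. The failure of $[y_i,u_i^k]$ to vanish has leading part a nonzero multiple of $k\,x_i^{2k+2}y_i^{k}z_i^{k-1}$, up to a $q$-integer factor that is nonzero since $q$ is not a root of unity. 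A careful but routine bookkeeping should show that no lower-order terms of $f$ can cancel this. The reason is that such terms would have to be of the form $x^{a}y^bz^c$ with $a_i>2b_i$, and these are not central even modulo lower terms for commutation with $z_i$.

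From this I conclude that $f$ has no $u$-part, hence lies in degree $0$, that is, $f\in\C$. If this bookkeeping becomes too heavy, my fallback is to restrict to the subalgebra $\C\langle x_i,y_i,z_i\rangle$ for fixed $i$. There I would compute directly that the centralizer of $\{x_i,y_i,z_i\}$ is $\C$, writing a candidate as $\sum_k p_k(x_i)(y_iz_i)^k$ by Step 1 and solving the recursion obtained from $[y_i,\cdot]=0$.
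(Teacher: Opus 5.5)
You prove the root-of-unity part correctly: $y_jx_i^m=q^{m\delta_{ij}}x_i^my_j$ and $z_jx_i^m=q^{-m\delta_{ij}}x_i^mz_j$ make $x_i^l$ central. The first assertion, however, cannot be proved, because as stated it is false. The paper gives no argument of its own here; it only quotes the result from Lezama--Venegas. For each $i$ put
\[
\Omega_i:=(1-q^4)\,x_i^2y_iz_i+q^4x_i^4 .
\]
Using $y_ix_i=qx_iy_i$, $z_ix_i=q^{-1}x_iz_i$ and $z_iy_i=q^2y_iz_i-q^2x_i^2$, one gets
\begin{align*}
y_i(x_i^2y_iz_i)-(x_i^2y_iz_i)y_i&=q^4x_i^4y_i, & y_ix_i^4-x_i^4y_i&=(q^4-1)x_i^4y_i,\\
z_i(x_i^2y_iz_i)-(x_i^2y_iz_i)z_i&=-x_i^4z_i, & z_ix_i^4-x_i^4z_i&=(q^{-4}-1)x_i^4z_i,
\end{align*}
so $[y_i,\Omega_i]=[z_i,\Omega_i]=0$. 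Moreover, $x_i$ commutes with $y_iz_i$, and every generator with index $j\neq i$ commutes with $x_i,y_i,z_i$. Hence $\Omega_i\in Z(U)$ for every $q$. Since $x_i^2y_iz_i$ is a standard monomial, $\Omega_i\notin\C$ whenever $q^4\neq 1$, in particular whenever $q$ is not a root of unity.

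This is exactly where your Step 3 breaks. The monomial $x_i^4$ has the same total degree as $u_i=x_i^2y_iz_i$ but fewer $y$'s and $z$'s. So it is one of the ``lower'' terms you claim cannot cancel the leading part of $[y_i,u_i]$. Yet $[y_i,x_i^4]=(q^4-1)x_i^4y_i$ is a multiple of that very term $q^4x_i^4y_i$, and the same coefficient also cancels the $z_i$-commutator. Your justification, that terms with $a_i>2b_i$ are not central modulo lower terms, is beside the point. Such terms need not be central themselves; they only have to cancel the commutators of the $u$-part, and here they do.

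Your Step 2 was the warning sign: the graded central elements $w_i$ do lift to $U$. Likewise, your fallback recursion on $\sum_k p_k(x_i)(y_iz_i)^k$ has the nontrivial solution $\Omega_i$ at $k=1$.

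Carrying your correct Step 1 further gives the actual center. The degree-zero part is the commutative polynomial ring in the $x_i$ and $h_i:=y_iz_i$, and $y_ig=\tau_i(g)y_i$ with $\tau_i(x_i)=qx_i$, $\tau_i(h_i)=q^{-2}h_i+q^2x_i^2$; also $z_ig=\tau_i^{-1}(g)z_i$. In the coordinates $x_i$, $h_i+\frac{q^4}{1-q^4}x_i^2$ the map $\tau_i$ is diagonal with weights $q,q^{-2}$. It follows that $Z(U)=\C[\Omega_1,\dots,\Omega_n]$ when $q$ is not a root of unity.

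A minor point on Step 2: if ``total degree'' also counts the $x_i$, the associated graded ring is $U$ itself, since all defining relations are homogeneous quadratic. You need the skew PBW filtration by degree in the $y$'s and $z$'s.

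Finally, the paper's next proposition ($U$ is not PI when $q$ is not a root of unity) remains true, since $\C\langle x_1,y_1\rangle\cong\mathcal{O}_q(\C^2)\subseteq U$. It cannot, however, be derived from a trivial center.
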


\begin{proposition}
If $q$ is not a root of unity, then the algebra $U$ does not satisfy the PI 
property.
\end{proposition}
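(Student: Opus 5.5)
The plan is to argue exactly as for the other examples in this subsection: combine the cited description of $Z(U)$ with primeness of $U$ and Corollary~\ref{PIprima3}. Suppose $q$ is not a root of unity. By the theorem of Lezama and Venegas quoted just above (\cite[Theorem 3.4]{LezamaVenegas2020}), $Z(U)=\C$. It therefore suffices to check the hypotheses of Corollary~\ref{PIprima3} with $\K=\C$, $R=\C[x_1,\dots,x_n]$ and $2n\ge 1$ indeterminates $y_1,\dots,y_n,z_1,\dots,z_n$. The conclusion is then that $U$ is not PI.

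The first hypothesis is that $R$ is a prime $\C$-algebra. This holds because $\C[x_1,\dots,x_n]$ is a commutative integral domain, hence prime.

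The second hypothesis is that $U\cong\sigma(\C[x_1,\dots,x_n])\langle y_1,\dots,y_n;z_1,\dots,z_n\rangle$ is \emph{bijective}. This is the only point needing verification. From the relations $y_jx_i=q^{\delta_{ij}}x_iy_j$ and $z_jx_i=q^{-\delta_{ij}}x_iz_j$, the endomorphisms of Proposition~\ref{prop23} are the $\C$-algebra maps $\sigma_{y_j}(x_i)=q^{\delta_{ij}}x_i$ and $\sigma_{z_j}(x_i)=q^{-\delta_{ij}}x_i$, with zero derivations. These are rescalings of variables by nonzero scalars, so they are automorphisms of $R$, with inverses given by the reciprocal scalars.

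The constants $c_{i,j}$ between the indeterminates are $1$ (commuting pairs) or $q^{2}$ (from $z_iy_i=q^2y_iz_i-q^2x_i^2$, whose lower term $-q^2x_i^2$ lies in $R$). Since $q\neq0$, all of these are units in $R$. Hence the extension is bijective, as already asserted in the text.

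With both hypotheses in place, Lemma~\ref{PrimoPBW} gives that $U$ is prime. Corollary~\ref{PIprima3}, applied with $Z(U)=\C$, then shows that $U$ does not satisfy the PI property.

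The only potential obstacle is confirming bijectivity, which is routine here. Alternatively, one can bypass the general corollary as a sanity check. The subalgebra generated by $x_1,y_1$ is the quantum plane $\mathcal{O}_q(\C^2)$, which by \cite[\S 7.1]{DeConciniProcesi1993} is not PI when $q$ is not a root of unity. Since a subalgebra of a PI-algebra is PI, $U$ cannot be PI, which gives an independent confirmation.
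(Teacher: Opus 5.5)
Your main argument is correct and is exactly the paper's proof: you get $Z(U)=\C$ from \cite[Theorem 3.4]{LezamaVenegas2020} and primeness of $U$ from Lemma~\ref{PrimoPBW}, then conclude with Corollary~\ref{PIprima3}; you also usefully make the bijectivity check explicit, which the paper takes for granted. Your closing check via the subalgebra generated by $x_1,y_1$ is also valid as a standalone proof, and more elementary, since the PBW basis makes it $\mathcal{O}_q(\C^2)$ and this needs neither the center computation nor primeness; it is the same route the paper uses for $H_n(q)$.
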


\begin{proof}
Since $U \cong \sigma\!\bigl(\mathbb{C}[x_1,\dots,x_n]\bigr)\langle 
y_1,\dots,y_n;\; z_1,\dots,z_n\rangle$ is a bijective skew PBW extension over 
the domain $\mathbb{C}[x_1,\dots,x_n]$, Lemma~\ref{PrimoPBW} implies that $U$ 
is a prime ring. By the theorem above, $Z(U) = \C$ and, by 
Corollary \ref{PIprima3}, this algebra does not satisfy the PI property.
\end{proof}

\subsection*{Quadratic algebras in three variables}

A quadratic algebra in three variables is a $\K$-algebra $A$ generated by 
$x, y, z$ subject to the relations
\[
\begin{aligned}
yx &= xy + a_1 zx + a_2 y^2 + a_3 yz + \xi_1 z^2,\\
zx &= xz + \xi_2 y^2 + a_5 yz + a_6 z^2,\\
zy &= yz + a_4 z^2.
\end{aligned}
\]
When $a_3 = a_5 = 0$ (which implies $a_2 = a_6 = 0$), the defining relations 
reduce to
\[
yx = xy + a_1 zx + \xi_1 z^2, \qquad zx = xz, \qquad zy = yz + a_4 z^2,
\]
yielding a subfamily of algebras that are skew PBW extensions of the form 
$A_2 \cong \sigma\!\bigl(\K[x,z]\bigr)\langle y\rangle$.
With regard to the center of these algebras, the following fact was proven in \cite{LezamaVenegas2020}.
\begin{theorem}[{\cite[Theorem 3.9]{LezamaVenegas2020}}]
Let $\K$ be an ordered field. The quadratic algebra $A_2$ generated by $x, y, z$ 
with relations $yx = xy + axz + bz^2$, $zx = xz$, $zy = yz + cz^2$, where 
$a, b, c \in \K^*$ and $ac < 0_{\K}$, has trivial center.
\end{theorem}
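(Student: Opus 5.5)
The plan is to write $A_2$ as a differential operator ring over $R:=\K[x,z]$, reduce the computation of $Z(A_2)$ to the ring of constants of a derivation of $R$, and then show that these constants are only the scalars, using a degree/eigenvalue argument in which the hypothesis $ac<0$ enters.

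\emph{Step 1: $A_2$ as an Ore extension.} From $zx=xz$ the subalgebra generated by $x,z$ is $R$. The relations give $yx=xy+\delta(x)$ and $yz=zy+\delta(z)$ with
\[
\delta(x)=axz+bz^{2},\qquad \delta(z)=-cz^{2}.
\]
Here $\delta(z)$ is obtained by rewriting $zy=yz+cz^2$ as $yz=zy-cz^2$. So $A_2\cong R[y;\mathrm{id},\delta]$, where $\delta$ is the $\K$-derivation of $R$ determined by these values. This is consistent with the description $A_2\cong\sigma(\K[x,z])\langle y\rangle$. Note also that an ordered field has characteristic zero.

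\emph{Step 2: reduction to constants.} Let $f=\sum_{i=0}^{m}r_iy^i\in Z(A_2)$ with $r_m\neq 0$. For $r\in R$, the identity $y^ir=ry^i+i\,\delta(r)y^{i-1}+(\text{lower terms})$ shows that the coefficient of $y^{m-1}$ in $fr-rf$ is $m\,r_m\delta(r)$. If $m\ge1$, take $r=z$. Then $m\,r_m(-cz^{2})=0$ in the domain $R$, with $c\neq0$ and $\mathrm{char}\,\K=0$, which is a contradiction. Hence $f=r_0\in R$, and $yf=fy$ forces $\delta(r_0)=0$. Therefore $Z(A_2)=\ker\delta$.

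\emph{Step 3: computing $\ker\delta$ (the main point).} Write $\delta=z\,D$ with
\[
D=(ax+bz)\,\partial_x-cz\,\partial_z .
\]
Since $R$ is a domain, $\ker\delta=\ker D$. The derivation $D$ is linear, so it preserves each space $R_n$ of homogeneous polynomials of degree $n$. Consequently $\ker D$ is spanned by its homogeneous components, and it suffices to show that $D$ is injective on $R_n$ for every $n\ge1$.

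On the basis $x^iz^{n-i}$ ($0\le i\le n$),
\[
D(x^iz^{n-i})=\bigl(ai-c(n-i)\bigr)x^iz^{n-i}+bi\,x^{i-1}z^{n-i+1}.
\]
Thus, ordering the basis by the power of $x$, $D|_{R_n}$ is triangular with diagonal entries $ai-c(n-i)$.

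Because $ac<0$, the elements $a$ and $-c$ have the same sign in the ordered field $\K$. Hence $ai-c(n-i)$ is a sum of two terms of that common sign, not both zero when $n\ge1$, and so it is nonzero. A triangular matrix with nonzero diagonal entries is invertible, so $D|_{R_n}$ is injective for all $n\ge1$. Therefore $\ker D=\K$, and $Z(A_2)=\K$.

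I expect the main obstacle to be Step 3, specifically getting the sign of $\delta(z)$ right. With the opposite sign, the diagonal entries would become $ai+c(n-i)$, which can vanish when $ac<0$, and nontrivial constants such as $2xz+bz^2$ would actually appear. So I would double-check the rewriting $yz=zy-cz^2$ before running the eigenvalue argument.
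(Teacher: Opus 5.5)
Your proof is correct and complete. The sign $\delta(z)=-cz^2$ is right, and Step 2 correctly yields $Z(A_2)=\ker\delta$, using $c\neq 0$ and $\mathrm{char}\,\K=0$. The factorization $\delta=zD$, with $D$ triangular on each $R_n$ and diagonal entries $ai-c(n-i)$, finishes the argument.

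The paper gives no proof of its own to compare with. It quotes the statement from Lezama--Venegas \cite[Theorem 3.9]{LezamaVenegas2020} and uses it only as input for Corollary~\ref{PIprima3} in the proposition that follows.

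Relative to that citation, your argument is self-contained and also sharper:
\begin{itemize}
\item The hypothesis $b\neq 0$ is never used.
\item The ordering of $\K$ serves only to force characteristic zero and to keep the diagonal entries nonzero.
\item Since $ai-c(n-i)=0$ exactly when $a/c=(n-i)/i$, and a triangular matrix with a zero on the diagonal is singular, the same computation gives a full criterion. Over any field of characteristic zero, with $a,c\in\K^*$, one has $Z(A_2)=\K$ if and only if $a/c\notin\mathbb{Q}_{>0}$.
\end{itemize}
For example, $a=c=1$ gives the nonscalar central element $2xz+bz^2$, so the hypothesis $ac<0$ cannot simply be dropped.
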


\begin{proposition}
Let $\K$ be an ordered field with $a, b, c \in \K^*$ and $ac < 0_{\K}$. Then $A_2$ 
does not satisfy the PI property.
\end{proposition}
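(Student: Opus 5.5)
The plan is to argue exactly as in the preceding examples: apply Corollary~\ref{PIprima3} to a suitable presentation of $A_2$ as a bijective skew PBW extension over a prime ring, using the preceding theorem to get a trivial center.

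The first step is to fix the presentation $A_2 \cong \sigma(\K[x,z])\langle y\rangle$. The base ring $\K[x,z]$ is a commutative polynomial ring, hence an integral domain and in particular a prime $\K$-algebra. From the relations $yx = xy + a xz + b z^2$ and $zy = yz + c z^2$, the endomorphism in Proposition~\ref{prop23} is $\sigma_y=\mathrm{id}_{\K[x,z]}$. The derivation $\delta_y$ is determined by $\delta_y(x) = -axz - bz^2$ and $\delta_y(z) = cz^2$, extended by the Leibniz rule.

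Since $\sigma_y$ is the identity, it is bijective. With only one variable there are no parameters $c_{i,j}$ to check. Hence $A_2$ is a bijective skew PBW extension of $\K[x,z]$. The one point requiring care is that $\delta_y$ must be a well-defined derivation, i.e.\ compatible with $xz=zx$. This holds because the paper already states that $A_2$ is a skew PBW extension, so the standard monomials form a basis. I would only note this, not verify it by hand.

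Next, Lemma~\ref{PrimoPBW} gives that $A_2$ is prime. By the previous theorem (\cite[Theorem 3.9]{LezamaVenegas2020}), the hypotheses that $\K$ is ordered, $a,b,c\in\K^*$ and $ac<0_{\K}$ give $Z(A_2)=\K$. Corollary~\ref{PIprima3}, applied with $R=\K[x,z]$ and $n=1$, then yields that $A_2$ does not satisfy the PI property.

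The only real obstacle is a mismatch in notation between the displayed relations. The general family is written $yx = xy + a_1 zx + \xi_1 z^2$, while the theorem uses $yx = xy + axz + bz^2$. Since $zx=xz$, these agree with $a=a_1$ and $b=\xi_1$. I would state this identification explicitly so that the hypotheses of the cited theorem apply verbatim.
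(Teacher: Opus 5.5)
Your argument is correct and is the same as the paper's: $A_2$ is a bijective skew PBW extension of the domain $\K[x,z]$, so it is prime by Lemma~\ref{PrimoPBW}; its center is $\K$ by Theorem~3.9 of \cite{LezamaVenegas2020}; and Corollary~\ref{PIprima3} then rules out the PI property. One harmless slip: from $y r = \sigma_y(r) y + \delta_y(r)$ one gets $\delta_y(x) = axz + bz^2$ and $\delta_y(z) = -cz^2$, so your signs are reversed, but only $\sigma_y=\mathrm{id}$ is used; also, $\delta_y$ is well defined without appealing to the paper, because a derivation of the polynomial ring $\K[x,z]$ can be prescribed freely on $x$ and $z$.
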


\begin{proof}
Since $A_2 \cong \sigma\!\bigl(\K[x,z]\bigr)\langle y\rangle$ is a bijective 
skew PBW extension over the domain $\K[x,z]$, Lemma \ref{PrimoPBW} implies that 
$A_2$ is a prime ring. By the theorem above, $Z(A_2) = \K$ and, by 
Corollary \ref{PIprima3}, the algebra $A_2$ does not satisfy the PI property.
\end{proof}

\subsection*{Witten's deformation of $U(\mathfrak{sl}(2,\K))$}

Witten introduced a seven-parameter deformation of the universal enveloping 
algebra $U(\mathfrak{sl}(2,\K))$, depending on a 7-tuple 
$\boldsymbol{\xi} = (\xi_1,\ldots,\xi_7)$, subject to the relations
\[
xz - \xi_1 zx = \xi_2 x, \qquad
zy - \xi_3 yz = \xi_4 y, \qquad
yx - \xi_5 xy = \xi_6 z^2 + \xi_7 z.
\]
The resulting algebra is denoted $W(\boldsymbol{\xi})$ and   turns out to be isomorphic to a bijective skew PBW extension of the form $\sigma\bigl(\sigma(\K[x])\langle z\rangle\bigr)\langle 
y\rangle$.

A particular case was studied by Levandovskyy in \cite{Levandovskyy2005}, under 
the assumption $\xi_1\xi_3\xi_5 \ne 0$. In this case the algebra is generated 
by $x, y, z$ subject to
\[
yx = cxy + bz^2 + z, \qquad
zx = \frac{1}{a}xz - \frac{1}{a}x, \qquad
zy = ayz + y,
\]
where $a, b, c \in \K^{*}$.

\begin{proposition}
The particular case of Witten's deformation described above does not satisfy the 
PI property.
\end{proposition}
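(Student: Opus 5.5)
The plan is to follow the same pattern used for the other examples in this subsection: show that the algebra, call it $W$, is a prime ring with trivial center, and then conclude with Corollary~\ref{PIprima3}.

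\emph{Primeness.} First I would write $W$ explicitly as the bijective skew PBW extension $\sigma(\sigma(\K[x])\langle z\rangle)\langle y\rangle$. Rewriting the relations gives
\[
xz=(az+1)x \qquad\text{and}\qquad yz=a^{-1}(z-1)y .
\]
So $z$ acts on $\K[x]$, and $y$ acts on $\K[x]\langle z\rangle$, through automorphisms; the relevant structure constants $a^{\pm1}$ and $c$ are invertible. Applying Lemma~\ref{PrimoPBW} twice, starting from the domain $\K[x]$, shows that $W$ is prime.

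\emph{Trivial center.} Next I would prove that $Z(W)=\K$. The first thing to try is to quote the computation of the center of this family, due to Levandovskyy \cite{Levandovskyy2005} and recollected among the examples in \cite{LezamaVenegas2020}. If that citation only covers generic parameters, I would argue directly. Let $f=\sum c_{\alpha}x^{\alpha_1}z^{\alpha_2}y^{\alpha_3}$ be central, and look at its leading terms with respect to a weight filtration.
\begin{itemize}
\item The commutators with $x$ and $y$ act on the $z$-part through the affine substitutions $z\mapsto az+1$ and $z\mapsto a^{-1}(z-1)$.
\item The relation $yx=cxy+bz^2+z$ adds terms of lower weight, provided $z$ is given smaller weight than $x$ and $y$.
\item Comparing top-degree coefficients in $[x,f]=0$ and $[y,f]=0$ should force the top monomial to be $1$, exactly as in the proof of Proposition~\ref{centerno-root}.
\end{itemize}

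\emph{Conclusion.} With $W$ prime and $Z(W)=\K$, Corollary~\ref{PIprima3} shows that $W$ does not satisfy the PI property.

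\emph{Main obstacle.} The hard step is the trivial-center claim when $a$ and $c$ are roots of unity. In that case Theorem~\ref{necesariaPI} gives no obstruction, and the associated graded algebra is PI.
\begin{itemize}
\item If $a=1$, the substitution $z\mapsto z-1$ has infinite order in characteristic zero. Applying Proposition~2.5 of \cite{LeroyMatczuk2007} to the PI subring $\K[z][y;\sigma]$ then already rules out PI.
\item If $a\neq1$, the substitution $z\mapsto a^{-1}(z-1)$ is affine with the fixed point $z_0=1/(1-a)$. The lower-order terms $bz^2+z$ must then supply the obstruction.
\end{itemize}
For the case $a\neq1$, I would first try to show the following: if $x^N$ and $y^N$ were ``almost central'', the inhomogeneous term $bz^2+z$ would produce a nonzero term $N\cdot(\text{unit})\,z^{k}$ in $[y,x^N]$, and this is nonzero in characteristic zero. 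This would prevent any nonscalar central element and prove the claim.
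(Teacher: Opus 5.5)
Your skeleton is the paper's own proof: $W$ is prime by applying Lemma~\ref{PrimoPBW} twice, $Z(W)=\K$ is quoted from \cite[Theorem~3.10]{LezamaVenegas2020}, and Corollary~\ref{PIprima3} finishes. The problem is the step you single out as the main obstacle. You plan to prove $Z(W)=\K$ directly when $a$ and $c$ are roots of unity. That cannot be done, because it is false there, and for some such parameters the proposition itself is false.

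For $a\neq 1$ put $z_0:=1/(1-a)$ and $z':=z-z_0$. Then $xz'=az'x$ and $yz'=a^{-1}z'y$, so $z'^{\,n}$ is central whenever $a^n=1$. Writing $bz^2+z=\sum_{j=0}^{2}\beta_j z'^{\,j}$, an induction gives
\[
yx^N=c^Nx^Ny+\Bigl(\sum_{j=0}^{2}\beta_j z'^{\,j}\sum_{k=0}^{N-1}(ca^j)^k\Bigr)x^{N-1}.
\]
So the factor $N$ you are counting on appears only for those $j$ with $ca^j=1$. If $ca^j\neq 1$ and its order divides $N$, that contribution vanishes.

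Concretely, let $a$ be a primitive cube root of unity and $c=-1$, with any $b\in\K^*$. Then $x^6$, $y^6$ and $z'^{\,3}$ are central; the claim for $y^6$ follows from the symmetric formula for $y^Nx$. Hence $W$ is a finitely generated module over $\K[x^6,y^6,z'^{\,3}]$ and is PI. So the center theorem you and the paper import can only hold under restrictions on $a$ and $c$. The proposition is valid only in that restricted range, which you should state rather than try to prove for all $a,b,c\in\K^*$.

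Even where the conclusion is true, trivial center is the wrong invariant. Take $c=1$, $b=a-1$ and $a$ not a root of unity. One checks that $xy+\frac{a^2}{a+1}z^2+\frac{a}{a+1}z$ commutes with $x$, $y$ and $z$, yet $W$ is not PI. The robust argument is the one you used only for $a=1$. The subalgebra generated by $z$ and $y$ is $\K[z][y;\psi]$ with $\psi(z)=a^{-1}(z-1)$. For $a\neq 1$ one has $\psi^m(z)=z_0+a^{-m}(z-z_0)$. So, in characteristic zero, $\psi$ has infinite order on $\K[z]$ exactly when $a=1$ or $a$ is not a root of unity. In those cases Proposition~2.5 of \cite{LeroyMatczuk2007} rules out PI with no information about $Z(W)$.

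If instead $c$ is not a root of unity, give $z$ weight $1$ and $x,y$ weight $2$. The associated graded ring is then a quantum affine space containing $\mathcal{O}_c(\K^2)$, so Lemma~\ref{graduadoasociado} together with \cite[\S7.1]{DeConciniProcesi1993} rules out PI. That is the correct scope of the statement. Once $a\neq 1$ and $c$ are both roots of unity, the example above shows the conclusion can fail.
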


\begin{proof}
By Theorem 3.10 of \cite{LezamaVenegas2020}, the center of this algebra is 
trivial. Applying Lemma \ref{PrimoPBW} twice yields that it is a prime ring and, 
by Corollary \ref{PIprima3}, it does not satisfy the PI property.
\end{proof}

The following example illustrates that the characteristic of the base field plays 
a relevant role in the study of the PI property of an algebra.

\subsection*{Jordan plane}
Let $\K$ be a field. The Jordan plane $\Lambda_2(\K)$ is the $\K$-algebra generated 
by $x$ and $y$ subject to the relation $yx = xy + x^2$, that is,
\[
\Lambda_2(\K) = \K\langle x,y\rangle/\langle yx - xy - x^2\rangle.
\]
This algebra can be seen as a skew PBW extension of $\K[x]$: 
$\Lambda_2(\K) \cong \sigma(\K[x])\langle y\rangle$, with $yx = xy + x^2$.

\begin{theorem}[{\cite[Theorem 2.2-(i)]{Shirikov2005}}]\label{centro-plano-jordan} Let $\Lambda_2(\K)$ be the algebra defined above. Then, the following holds:
\begin{enumerate}[\rm (i)]
  \item If $\mathrm{char}(\K) = 0$, then $Z(\Lambda_2(\K)) = \K$.
  \item If $\mathrm{char}(\K) = p > 0$ is prime, then $Z(\Lambda_2(\K))$ 
  is the subalgebra of $\Lambda_2(\K)$ generated by $x^p$ and $y^p$.
\end{enumerate}
\end{theorem}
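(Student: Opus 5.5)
We need to prove the center of the Jordan plane $\Lambda_2(\K)$: it is trivial in characteristic zero, and it is generated by $x^p,y^p$ in characteristic $p$. The plan is to view $\Lambda_2(\K)=\K[x][y;\mathrm{id},\delta]$ with $\delta=x^2\frac{d}{dx}$, so that $[y,f(x)]=x^2f'(x)$ for all $f\in\K[x]$. We work with the PBW basis $\{x^iy^j\}$.

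\textbf{Step 1: commutators with $y$ and $x$.} Write a central element as $z=\sum_j f_j(x)y^j$ with $f_j\in\K[x]$.
\begin{itemize}
\item Since $y$ commutes with $y^j$, we get $[y,z]=\sum_j x^2 f_j'(x)\,y^j$.
\item Centrality forces $f_j'=0$ for every $j$.
\item For $[x,z]$, note that $\mathrm{ad}_y$ acts on $\K[x]$ as the derivation $\delta$. Hence $y^jx=\sum_k\binom{j}{k}\delta^k(x)\,y^{j-k}$, where $\delta^k(x)=k!\,x^{k+1}$.
\item It follows that $[x,y^j]=-\sum_{k\ge1}\binom{j}{k}k!\,x^{k+1}y^{j-k}$, whose leading term in $y$ is $-j\,x^2y^{j-1}$.
\end{itemize}

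\textbf{Step 2: characteristic zero.} Here $f_j'=0$ means each $f_j=c_j\in\K$. Let $m$ be the top degree in $y$ with $c_m\neq0$. The coefficient of $y^{m-1}$ in $[x,z]$ is $-m\,c_m x^2$, since the term $c_{m-1}y^{m-1}$ contributes only to degree $\le m-2$. Centrality then forces $m=0$, so $Z=\K$.

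\textbf{Step 3: characteristic $p$, centrality of $x^p$ and $y^p$.}
\begin{itemize}
\item $[y,x^p]=p\,x^{p+1}=0$, so $x^p$ is central.
\item For $y^p$: $\mathrm{ad}_{y^p}=(\mathrm{ad}_y)^p$ in characteristic $p$, so $[y^p,x]=\delta^p(x)=p!\,x^{p+1}=0$. Hence $y^p$ is central.
\end{itemize}

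\textbf{Step 4: characteristic $p$, the reverse inclusion.} From $f_j'=0$ each $f_j\in\K[x^p]$. Since $x^p$ is central, subtract off the $y^{pk}$ components: define $z'=\sum_{p\nmid j} f_jy^j$, which is still central. Suppose $z'\neq0$ and let $m$ be its top $y$-degree, so $p\nmid m$. The $y^{m-1}$ coefficient of $[x,z']$ is $-m f_m x^2$; the $j=m-1$ term does not contribute there because $\binom{m-1}{k}$ with $k\ge1$ lowers the degree further. Since $m\neq0$ in $\K$, this coefficient is nonzero, a contradiction. Thus $Z=\K[x^p,y^p]$.

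The main obstacle is the bookkeeping in the expansion of $y^jx$, i.e. the leading-term argument for $[x,z]$. I would handle it with the Leibniz formula for $\mathrm{ad}_y$, which is exact because $\mathrm{ad}_y$ is a derivation of $\Lambda_2(\K)$ that preserves $\K[x]$.
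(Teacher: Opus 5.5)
Your proof is correct and complete. Each step checks out. The identity $[y,z]=\sum_j x^2f_j'(x)\,y^j$ together with the left $\K[x]$-basis $\{y^j\}$ and the domain property forces $f_j'=0$. The Leibniz expansion in $\K[x][y;\mathrm{id},\delta]$, with $\delta^k(x)=k!\,x^{k+1}$, correctly isolates $-m\,f_m x^2$ as the coefficient of $y^{m-1}$ in $[x,z]$. The identity $\mathrm{ad}_{y^p}=(\mathrm{ad}_y)^p$ in characteristic $p$ (including $p=2$) gives $[y^p,x]=p!\,x^{p+1}=0$.

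The paper gives no proof of this statement. It imports the result from Shirikov and uses it as a black box in Theorem~\ref{Propiedad-pi-plano-de-jordan}. Your direct computation in the Ore extension $\K[x][y;\mathrm{id},x^2\frac{d}{dx}]$ therefore makes the section self-contained at the cost of a short calculation.

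It also shows how much of the result the paper actually needs. Part (ii) of Theorem~\ref{Propiedad-pi-plano-de-jordan} uses only your Step~3, the centrality of $x^p$ and $y^p$. That step already makes $\Lambda_2(\K)$ a finitely generated module over the central subalgebra $\K[x^p,y^p]$, with generators $x^iy^j$, $0\le i,j<p$. The reverse inclusion of Step~4 is not needed for PI. Part (i), by contrast, genuinely requires the full characteristic-zero computation of Step~2.

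One small wording point in Step~4: $z-z'=\sum_{p\mid j}f_jy^j$ lies in $\K[x^p,y^p]$. It is central because both $x^p$ and $y^p$ are central, not just $x^p$. You established this in Step~3, so it is not a gap.
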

\noindent
The previous result allows us to demonstrate the following facts about the PI property of $\Lambda_2(\K)$.

\begin{theorem}\label{Propiedad-pi-plano-de-jordan} Let $\Lambda_2(\K)$ as before. 
\begin{enumerate}[\rm (i)] 
  \item If $\mathrm{char}(\K) = 0$, then $\Lambda_2(\K)$ does not satisfy 
  the PI property.
  \item If $\mathrm{char}(\K) = p > 0$ is prime, then $\Lambda_2(\K)$ 
  satisfies the PI property.
\end{enumerate}
\end{theorem}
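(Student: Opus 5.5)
Part (i) follows the pattern used throughout this section. The Jordan plane $\Lambda_2(\K)\cong\sigma(\K[x])\langle y\rangle$ has $\sigma_y=\mathrm{id}$ and $\delta_y(x)=x^2$ (since $yx=xy+x^2$), and there are no $c_{i,j}$ to check. Hence it is a bijective skew PBW extension of the integral domain $\K[x]$, which is prime. Lemma \ref{PrimoPBW} therefore gives that $\Lambda_2(\K)$ is prime. When $\mathrm{char}(\K)=0$, Theorem \ref{centro-plano-jordan}-(i) gives $Z(\Lambda_2(\K))=\K$. With $n=1$, Corollary \ref{PIprima3} shows that $\Lambda_2(\K)$ is not PI.

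For part (ii), I would show that $\Lambda_2(\K)$ is a finitely generated module over a commutative central subring and then apply \cite[\S 13.1.13, Corollary]{mcconnell}, as in the proofs of Theorems \ref{PI5} and \ref{PI-O_qSP(K2n)}. By Theorem \ref{centro-plano-jordan}-(ii), $x^p,y^p\in Z(\Lambda_2(\K))$, so $Z_0:=\K[x^p,y^p]$ is a commutative central subring. The key step is that every standard monomial $x^iy^j$ can be rewritten as
\[
x^iy^j=(x^p)^{s}(y^p)^{t}\,x^{a}y^{b},\qquad i=ps+a,\ j=pt+b,\ 0\le a,b<p.
\]
Here $x^i=(x^p)^sx^a$ holds trivially, and $y^j=(y^p)^ty^b$ also holds trivially. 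Since $x^p$ and $y^p$ are central, they can be moved to the front. So $\Lambda_2(\K)$ is generated as a $Z_0$-module by the finite set $\{x^ay^b\mid 0\le a,b<p\}$, and hence it is PI.

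There is no real obstacle once Theorem \ref{centro-plano-jordan} is assumed. If one prefers not to rely on it for the centrality of $y^p$, that fact can be checked directly. Write $y=$ (right multiplication by $x$) $+\,x^2\partial$ acting by commutators, i.e. $[y,f(x)]=x^2f'(x)$. Then $\mathrm{ad}_y$ acts on $\K[x]$ as the derivation $D=x^2\,d/dx$. In characteristic $p$, $D^p$ is again a derivation (Hochschild's formula), and it is determined by $D^p(x)$. A short computation gives $D^k(x)=k!\,x^{k+1}$, so $D^p(x)=p!\,x^{p+1}=0$. Therefore $[y^p,x]=\mathrm{ad}_y^p(x)=0$. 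Centrality of $x^p$ follows from $[y,x^p]=px^{p+1}=0$. This verification of $[y^p,x]=0$ is the only step requiring care.
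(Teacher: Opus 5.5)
Your proposal is correct and follows the paper's proof. Part (i) combines Lemma \ref{PrimoPBW}, the trivial center from Theorem \ref{centro-plano-jordan}-(i), and Theorem \ref{PIprima2}/Corollary \ref{PIprima3}; part (ii) uses that $\Lambda_2(\K)$ is a finite module over $\K[x^p,y^p]\subseteq Z(\Lambda_2(\K))$ together with \cite[\S 13.1.13, Corollary]{mcconnell}. Your explicit generating set $\{x^ay^b\mid 0\le a,b<p\}$ and the direct check $[y^p,x]=\mathrm{ad}_y^p(x)=p!\,x^{p+1}=0$ (using $\mathrm{ad}_{y^p}=\mathrm{ad}_y^p$ in characteristic $p$) fill in details the paper leaves implicit, and they show that (ii) only needs $x^p$ and $y^p$ to be central, not the full description of the center.
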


\begin{proof}
(i) Suppose $\mathrm{char}(\K) = 0$ and that $\Lambda_2(\K)$ satisfies the 
PI property. Since $\K[x]$ is an integral domain and hence a prime ring,  Lemma \ref{PrimoPBW} implies that $\Lambda_2(\K)$ is a prime skew PBW extension. 
By Theorem \ref{PIprima2}, one has $Z(\Lambda_2(\K)) \ne \K$, contradicting 
Theorem \ref{centro-plano-jordan}-(i).

(ii) If $\mathrm{char}(\K) = p > 0$, Theorem \ref{centro-plano-jordan} 
gives $Z(\Lambda_2(\K)) = \K[x^p, y^p]$. Therefore $\Lambda_2(\K)$ is finitely 
generated as a $Z(\Lambda_2(\K))$-module, for \cite[\S 13.1.13, Corollary]{mcconnell} this algebra satisfies the PI property.
\end{proof}

The following table summarizes the algebras studied in this subsection that do 
not satisfy the PI property when none of their parameters $q$ is a root of 
unity, working over a field $\K$ of characteristic zero.

\begin{table}[h]
\centering
\renewcommand{\arraystretch}{1.2}
\begin{tabular}{|l|c|}
\hline
\textbf{Algebra $A$} & $\boldsymbol{Z(A)}$ \\
\hline
\hline
Extended Weyl algebra $B_n(\K)$ & $\K$ \\
Shift operator algebra $S_h$ & $\K$ \\
Mixed algebra $D_h$ & $\K$ \\
Linear partial shift operators 
  $\K[t_1,\ldots,t_n][E_1,\ldots,E_n]$ & $\K$ \\
Linear partial differential operators 
  $\K[t_1,\ldots,t_n][\partial_1,\ldots,\partial_n]$ & $\K$ \\
Linear partial difference operators 
  $\K[t_1,\ldots,t_n][\Delta_1,\ldots,\Delta_n]$ & $\K$ \\
Linear partial $q$-dilation operators 
  $\K[t_1,\ldots,t_n][H^{(q)}_1,\ldots,H^{(q)}_n]$ & $\K$ \\
Linear partial $q$-differential operators 
  $\K[t_1,\ldots,t_n][D^{(q)}_1,\ldots,D^{(q)}_n]$ & $\K$ \\
Additive analogue of the Weyl algebra $A_n(q_1,\ldots,q_n)$ & $\K$ \\
Woronowicz algebra $W_\nu(\mathfrak{sl}(2,\K))$ & $\K$ \\
Algebra $U$ & $\K$ \\
Particular Witten deformation of $U(\mathfrak{sl}(2,\K))$ & $\K$ \\
Quantum symplectic space 
  $\mathcal{O}_q(\mathfrak{sp}(\K^{2n}))$ & $\K$ \\
Jordan plane $\Lambda_2(\K)$ ($\mathrm{char}(\K) = 0$) & $\K$ \\
Quantum plane $\mathcal{O}_q(\K^2)$ ($q$ not a root of unity) & $\K$ \\
Quadratic algebras in three variables $A_2$ & $\K$ \\
\hline
\end{tabular}
\caption{Skew PBW extensions with trivial center that do not satisfy the PI 
property.}
\end{table}

\section{The PI property of algebras over fields of positive characteristic}
\label{charpos}

In the preceding sections, the base field $\K$ has been assumed, unless otherwise 
stated, to have characteristic zero. In this section we study the PI property of 
various algebras over fields of positive characteristic. The main theoretical 
framework is provided by the work of Brown and Zhang \cite{BrownZhang2022}, who 
studied iterated Hopf--Ore extensions over an algebraically closed field of 
positive characteristic $p$ and obtained, among other results, necessary and 
sufficient conditions for such algebras to satisfy the PI property.

\begin{lemma}[{\cite[Lemma 3.2]{BrownZhang2022}}]\label{LemaBrownZhang3.2}
Let $B = A[x;\sigma,\delta]$ be an Ore extension of an affine $\K$-algebra $A$.
\begin{enumerate}[\rm (i)]
  \item Suppose that the following conditions hold:
        \begin{enumerate}[\rm (a)]
          \item The $\K$-algebra $A$ is a prime ring that is a finitely generated module over 
          $Z(A)$;
          \item $Z(A)$ is an affine $\K$-algebra;
          \item the restriction $\sigma|_{Z(A)}$ has finite order;
          \item  the characteristic of $\K$ is positive.
        \end{enumerate}
        Then $B$ is a prime Noetherian $\K$-algebra satisfying the PI property, 
        and $A$ is a finite module over a subalgebra of its center on which 
        $\sigma$ acts as the identity and $\delta$ acts as the zero derivation.
\item If, in addition to hypotheses \emph{(ia)}--\emph{(id)}, one 
  assumes:
        \begin{enumerate}
          \item[\rm (e)] $A$ is a maximal order;
        \end{enumerate}
        then $B$ is a maximal order, is a finitely generated module over $Z(B)$, 
        and is a normal affine $\K$-algebra.
\end{enumerate}
\end{lemma}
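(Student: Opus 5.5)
The plan is to produce an explicit commutative subring $C\subseteq Z(A)$, finite over its $p$-th-power-like part, on which $\sigma$ is the identity and $\delta$ vanishes. With such a $C$ in hand, $B$ is a finite module over the commutative ring $C[x]$, and the PI property follows from a standard embedding into matrices. Throughout I assume $\sigma$ is an automorphism of $A$, as is implicit in the Hopf--Ore setting.

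\emph{Step 1: $\sigma$ and $\delta$ preserve $Z:=Z(A)$.} Since $\sigma$ is an automorphism, it preserves $Z$. For $\delta$, take $z\in Z$ and $r\in A$ and expand $\delta(zr)=\delta(rz)$ using the $\sigma$-derivation rule. Passing to the subring $Z_0:=Z^{\langle\sigma|_Z\rangle}$ of fixed points, I expect to show that $\delta(z)$ commutes with $z$, and more precisely that $\delta|_{Z_0}$ is an ordinary derivation of the commutative ring $Z_0$ with values in $Z$.

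\emph{Step 2: finiteness of $Z_0$.} Because $\sigma|_Z$ has finite order by (ic), the group $\langle\sigma|_Z\rangle$ is finite. Noether's theorem together with (ib) then makes $Z_0$ affine and $Z$ a finite $Z_0$-module.

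\emph{Step 3: building $C$.} Set $C:=\K[z^p\mid z\in Z_0]$. Since $\operatorname{char}\K=p$ by (id), the Frobenius argument shows that $Z_0$ is finite over $C$, and hence $A$ is finite over $C$ by (ia) and Step 2. For $c=z^p$ we get $\sigma(c)=c$, and $\delta(c)=p\,z^{p-1}\delta(z)=0$, using that $\delta(z)$ commutes with $z$. Thus every $c\in C$ satisfies $xc=cx$ in $B$, so $C\subseteq Z(B)$. This already gives the last assertion of (i).

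\emph{Step 4: conclusions of (i).} Since $B=\sum_i Ax^i$, the ring $B$ is a finitely generated left module over the commutative subring $C[x]$; note $x$ commutes with $C$. Left multiplication embeds $B$ into $\operatorname{End}_{C[x]}(B)$. That ring is a homomorphic image of a subring of a matrix ring over $C[x]$, so it satisfies the standard identity by Amitsur--Levitzki, and $B$ is PI. Noetherianity follows because $A$ is a finite module over an affine commutative ring, combined with the Hilbert basis theorem for Ore extensions. Primeness of $B$ follows from primeness of $A$ when $\sigma$ is an automorphism, by the usual leading-coefficient argument.

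\emph{Part (ii).} I would first cite the transfer of the maximal order property to Ore extensions (Chamarie's theorem) to get that $B$ is a maximal order. Then $B$ is a prime affine Noetherian PI maximal order. By Braun/Schelter-type results, its center is affine and $B$ is finite over $Z(B)$. The center $Z(B)$ is integrally closed because $B$ is a maximal order, so $B$ is a normal affine algebra.

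The main obstacle is Step 1: showing that $\delta(z)$ commutes with $z$ for $z\in Z_0$, so that $\delta(z^p)=0$. If the direct computation fails, my fallback is to replace $z^p$ by $z^{p^k}$ and use that $\delta^{p}$ is again a $\sigma^p$-derivation in characteristic $p$. Failing that, I would use that $\delta$ induces an ordinary derivation on the field of fractions of $Z_0$, whose $p$-th powers are constants.
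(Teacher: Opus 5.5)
The paper gives no proof of this lemma. It imports it from Brown--Zhang \cite{BrownZhang2022} and only uses it as a black box (Examples \ref{Weylcp} and \ref{ej:plano-cuantico}), so your proposal has to stand on its own. In outline it does. Assume $\sigma$ is an automorphism; this is the setting of the cited source, and you need it for $\sigma(Z(A))=Z(A)$, primeness and two-sided Noetherianity. Then the chain $Z(A)\supseteq Z_0=Z(A)^{\langle\sigma\rangle}\supseteq C=\K[z^p\mid z\in Z_0]$, together with Noether's theorem, the Frobenius reduction and finiteness over $C[x]$, proves (i). However, Step 1 is false as stated, and the ``main obstacle'' you single out is not an obstacle.

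For $z\in Z(A)$ with $\sigma(z)=z$, expanding $\delta(zr)=\delta(rz)$ only gives $\delta(z)r=\sigma(r)\delta(z)$ for all $r\in A$. So $\delta(z)$ is $\sigma$-normalizing, not central. Concretely, take $A=M_2(\K[t])$, $u=\begin{pmatrix}1&1\\0&1\end{pmatrix}$, $\sigma(r)=uru^{-1}$, and $\delta:=u\,D$ with $D$ the entrywise $d/dt$. Then $\delta$ is a $\sigma$-derivation, hypotheses (a)--(d) hold with $\sigma|_{Z(A)}=\mathrm{id}$, yet $\delta(t)=u\notin Z(A)$. Hence neither ``$\delta$ preserves $Z$'' nor ``$\delta|_{Z_0}$ is a derivation of $Z_0$ with values in $Z$'' holds. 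Your last fallback (a derivation of the fraction field of $Z_0$) rests on this false claim, and the $\delta^p$ fallback would need a commutation relation between $\sigma$ and $\delta$ that is not among the hypotheses.

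None of this is needed. Since $z\in Z_0$ is central in $A$, it commutes with $\delta(z)$ automatically, and the twisted Leibniz rule gives
\[
\delta(z^n)=\sum_{i=0}^{n-1}\sigma(z)^i\,\delta(z)\,z^{n-1-i}=n\,z^{n-1}\delta(z),
\]
so $\delta(z^p)=0$. Together with $\delta(1)=0$ and $\K$-linearity, this gives $\delta|_C=0$. From Step 1 you should keep only $\sigma(Z)=Z$.

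Two smaller corrections:
\begin{enumerate}
\item In Step 4, $C[x]$ is commutative but not central, since $x\notin Z(B)$ in general. So left multiplications are not left $C[x]$-linear. Instead write $A=\sum_j Ca_j$ and regard $B=\sum_i Ax^i=\sum_j a_jC[x]$ as a finitely generated right $C[x]$-module. Left multiplication then embeds $B$ into $\operatorname{End}(B_{C[x]})$, and the Amitsur--Levitzki step goes through as you wrote it.
\item In (ii), finiteness of $B$ over $Z(B)$ is exactly where hypothesis (e) must enter, so ``Braun/Schelter-type results'' should be made precise. A prime affine PI algebra shares a nonzero ideal with its trace ring $T(B)$ (the ideal generated by evaluations of a trace-absorbing central polynomial). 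Also, $T(B)$ is a finite module over an affine central subalgebra. Since $B$ is a maximal order (Chamarie's theorem, using that $B$ is Noetherian and $\sigma$ is bijective), it follows that $B=T(B)$. Then $Z(B)$ is affine by Artin--Tate, and it is integrally closed because $B$ is a maximal order.
\end{enumerate}
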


\begin{example}[First Weyl algebra]\label{Weylcp}
Let $\K$ be a field of positive characteristic $p > 0$ and consider the first 
Weyl algebra $A_1(\K) = \K[y][x;\mathrm{id},\frac{d}{dy}]$. In contrast to the 
characteristic-zero case, where $Z(A_1(\K)) = \K$ and the algebra does not satisfy 
the PI property, when $\mathrm{char}(\K) = p > 0$, the algebra $A_1(\K)$ satisfies 
the hypotheses of Lemma \ref{LemaBrownZhang3.2} and turns out be a prime 
Noetherian $\K$-algebra satisfying the PI property.
\end{example}

\begin{example}[Quantum plane]\label{ej:plano-cuantico}
Let $\K$ be a field of positive characteristic $p > 0$ and let $q \in \K^{*}$. 
Consider the quantum plane $\mathcal{O}_q(\K^2) = \K\langle x,y \mid yx = qxy
\rangle \cong \K[y][x;\sigma]$, with $\sigma(y) = qy$. The algebra 
$\mathcal{O}_q(\K^2)$ satisfies the hypotheses of Lemma \ref{LemaBrownZhang3.2} 
if and only if $q$ is a root of unity, since in that case $\sigma|_{Z(\K[y])}$ 
has finite order. Under this condition, $\mathcal{O}_q(\K^2)$ is a prime 
Noetherian $\K$-algebra satisfying the PI property. This criterion coincides with 
the one established in characteristic zero: the quantum plane satisfies the PI 
property if, and only if, $q$ is a root of unity, regardless of the characteristic 
of the base field.
\end{example}
The following property, proven in \cite{BrownZhang2022}, will allow us to establish necessary and sufficient 
conditions for the PI property in several classes of algebras that admit a 
presentation as iterated Ore extensions, including quasi-commutative skew PBW 
extensions and double Ore extensions. Hereinafter, the base field 
$\K$ is assumed to have positive characteristic. A domain $R$ is said to be \emph{homologically homogeneous} if all simple right $R$-modules have the same projective dimension, moreover, if $R$ is a noetherian $\K$-algebra such that $\operatorname{GKdim}(R)<\infty$, then $R$ is said to be \emph{GK-Cohen--Macaulay} if, for every nonzero finitely generated right $R$-module $M$, one has
\[
j_R(M)+\operatorname{GKdim}(M)=\operatorname{GKdim}(R),
\]
where
\[
j_R(M):=\inf\{\,i\geq 0\mid \operatorname{Ext}^i_R(M,R)\neq 0\,\}.
\]

\begin{theorem}[{\cite[Theorem 3.3]{BrownZhang2022}} ]\label{Iteradas-3.3}
Suppose that $\K$ has positive characteristic $p$, let $n$ be a non-negative 
integer, and let $R$ be an iterated Ore extension of $n$ steps
\begin{equation}\label{eq:iterated-Ore}
R \;\cong\; \K[x_1][x_2;\sigma_2,\delta_2]\cdots[x_n;\sigma_n,\delta_n].
\end{equation}
For $i = 1,\dots,n$, denote $R(i) := \K\langle x_1,\dots,x_i\rangle$. Then,
\begin{enumerate}[\rm (i)]
  \item The ring $R$ satisfies the PI property if and only if the restriction 
  $\sigma_i\big|_{Z(R(i-1))}$ has finite order for all $i = 2,\dots,n$.

  \item If $R$ satisfies the PI property, then $R$ is a finitely generated 
  module over its center, which is a normal affine domain over $K$. Moreover, $R$ 
  is a homologically homogeneous and GK-Cohen--Macaulay domain, with
  \[
    \operatorname{gldim}(R) = \operatorname{GKdim}(R) = n.
  \]
\end{enumerate}
\end{theorem}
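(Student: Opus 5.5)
This is Brown–Zhang's Theorem 3.3, and I would prove it by induction on $n$, with Lemma \ref{LemaBrownZhang3.2} as the engine for each induction step. The base cases are trivial: $R(0)=\K$ and $R(1)=\K[x_1]$ are commutative affine domains, hence PI, finite over their centres, and normal.

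For the inductive step in (i), direction ($\Leftarrow$), I would assume $R(i-1)$ is PI. Then (ii) for $R(i-1)$ gives the following facts:
\begin{itemize}
\item $R(i-1)$ is a prime domain, finite over $Z(R(i-1))$;
\item $Z(R(i-1))$ is an affine normal domain;
\item $R(i-1)$ is a maximal order.
\end{itemize}
For the last point I would argue that a homologically homogeneous PI ring finite over its normal centre is a maximal order, by the Brown--Hajarnavis/Stafford results. Since $\sigma_i|_{Z(R(i-1))}$ has finite order and $\operatorname{char}\K=p>0$, hypotheses (a)--(e) of Lemma \ref{LemaBrownZhang3.2} hold. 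The lemma then shows that $R(i)=R(i-1)[x_i;\sigma_i,\delta_i]$ is a prime Noetherian PI algebra, is a maximal order, is finite over its centre, and has normal affine centre. This propagates (i) and most of (ii) up the tower.

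For ($\Rightarrow$), suppose $R$ is PI. Then every $R(i)$ is PI, being a subalgebra. I would then need to show that $\sigma_i|_{Z(R(i-1))}$ has finite order. I would invoke Leroy--Matczuk (Proposition 2.5 of \cite{LeroyMatczuk2007}, as already used in Theorem \ref{PI-Uqsl2}): if $S[x;\sigma,\delta]$ is PI with $S$ prime, then $\sigma$ restricted to $Z(S)$ has finite order. If that result only covers $\delta=0$, I would reduce to that case by passing to the associated graded ring for the $x_i$-degree filtration. That graded ring is $R(i-1)[x_i;\sigma_i]$, and it stays PI by Lemma \ref{graduadoasociado}.

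For the homological part of (ii), the lemma yields a central subalgebra $C\subseteq Z(R(i-1))$ over which $R(i-1)$ is finite and on which $\sigma_i=\mathrm{id}$, $\delta_i=0$. Hence $C[x_i]$ is central in $R(i)$ and $R(i)$ is finite over it. I would prove GK-Cohen--Macaulayness and homological homogeneity by induction, using the known fact that Ore extensions of Auslander-regular, Cohen--Macaulay algebras inherit these properties. It is cleanest to work through the associated graded ring, which is an iterated skew polynomial ring. That gives $\operatorname{gldim}R\le n$, with $\operatorname{GKdim}R=n$ from the PBW basis, and equality of the two since Cohen--Macaulay plus finite global dimension forces $\operatorname{gldim}=\operatorname{GKdim}$. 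Homological homogeneity then follows from the Brown--Hajarnavis criterion for PI rings that are finite over their centre.

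The main obstacle is keeping the maximal-order hypothesis (e) available at each induction step. Establishing it directly is delicate, so I would route it through homological homogeneity plus normality of the centre, proved simultaneously in the same induction.
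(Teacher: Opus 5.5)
The paper does not prove this statement; it quotes it from Brown--Zhang. So I judge your proposal on its own terms.

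\textbf{Part (i) is in good shape.} The $(\Leftarrow)$ induction through Lemma~\ref{LemaBrownZhang3.2} is the right engine. Your $(\Rightarrow)$ reduction is also sound. The $x_i$-degree filtration of $R(i)$ has associated graded ring $R(i-1)[\bar x_i;\sigma_i]$, which is PI by Lemma~\ref{graduadoasociado}. The Leroy--Matczuk finite-order result then applies as in the proof of Theorem~\ref{PI-Uqsl2}, using that $\sigma_i$ is an automorphism and so maps $Z(R(i-1))$ to itself.

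\textbf{The maximal-order ``obstacle'' is not real, and your workaround is false.} Lemma~\ref{LemaBrownZhang3.2}(ii) lists ``$B$ is a maximal order'' among its \emph{conclusions}. So you should put ``$R(i)$ is a maximal order, finite over its centre, with normal affine centre'' into the induction hypothesis. Start from $\K[x_1]$, an integrally closed noetherian domain; then (a), (b), (e) are available at every step.

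The implication you propose instead fails. ``Homologically homogeneous, PI, finite over a normal centre $\Rightarrow$ maximal order'' is false. Take the hereditary order $\Lambda=\begin{pmatrix}\K[x]&\K[x]\\ x\K[x]&\K[x]\end{pmatrix}$. It is prime with centre $\K[x]$, and it has global dimension $1$ with every simple module of projective dimension $1$. Yet $xM_2(\K[x])\subseteq\Lambda\subsetneq M_2(\K[x])$, so $\Lambda$ is not a maximal order.

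\textbf{The genuine gap is in the homological half of (ii).} GK-Cohen--Macaulayness is not known to pass to Ore extensions in general. Ekstr\"om's theorem gives this for Auslander-regularity (with $\sigma$ an automorphism), not for CM.

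Nothing in the hypotheses gives a filtration whose associated graded ring is an iterated skew polynomial ring with finite-dimensional pieces. The $\sigma_i$ are arbitrary automorphisms of the noncommutative algebras $R(i-1)$, and the $\delta_i$ are arbitrary. The $x_i$-degree filtration has $F_0=R(i-1)$ infinite-dimensional, so the usual graded-to-filtered transfer of GK-CM does not apply.

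Likewise, $\operatorname{GKdim}R=n$ does not follow from the PBW basis. For example, $\K[x_1][x_2][x_3;\sigma_3]$ with $\sigma_3(x_1)=x_2$, $\sigma_3(x_2)=x_1+x_2^2$ has exponential growth. Under PI the equality does hold, and it comes from your own observation. Since $R(i)$ is finite over the central subalgebra $C[x_i]$ and $R(i-1)$ is finite over $C$, you get $\operatorname{GKdim}R(i)=\operatorname{GKdim}R(i-1)+1$.

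To close the gap, use the PI hypothesis directly.
\begin{enumerate}[(1)]
\item Ekstr\"om's theorem applied step by step makes $R$ Auslander-regular.
\item $R$ is a noetherian PI algebra finite over an affine centre, and for such rings Auslander-regularity gives GK-Cohen--Macaulayness (Stafford--Zhang).
\item Every simple module $S$ is then finite-dimensional, so with the Ore-extension bound $\operatorname{gldim}R\le n$ you already use,
\[
\operatorname{pd}S\;\ge\; j_R(S)\;=\;\operatorname{GKdim}R\;=\;n\;\ge\;\operatorname{gldim}R .
\]
\end{enumerate}
This gives $\operatorname{pd}S=n$ for every simple $S$, which is homological homogeneity, and $\operatorname{gldim}R=\operatorname{GKdim}R=n$ at once. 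You need neither the Brown--Hajarnavis criterion nor any associated graded ring.
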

For quasi-commutative skew PBW extensions the following property holds.
\begin{theorem}[{\cite[Theorem 3.1.4]{skewbook}}]\label{thm:quasi-commutative-iterated}
Let $A$ be a quasi-commutative skew PBW extension of a ring $R$. Then $A$ is 
isomorphic to an iterated skew polynomial ring of endomorphism type of the form
\[
B := R[z_1;\theta_1]\cdots[z_n;\theta_n],
\]
where
\[
\left\{
\begin{aligned}
&\theta_1 := \sigma_1;\\
&\theta_j \colon R[z_1;\theta_1]\cdots[z_{j-1};\theta_{j-1}]
   \longrightarrow R[z_1;\theta_1]\cdots[z_{j-1};\theta_{j-1}],\\
&\theta_j(z_i) := c_{ij}z_i,\ 1 \le i < j \le n,\quad
 \theta_j(r) := \sigma_j(r),\ \text{for all } r \in R.
\end{aligned}
\right.
\]
\end{theorem}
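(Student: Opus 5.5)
We prove that $A\cong B:=R[z_1;\theta_1]\cdots[z_n;\theta_n]$ by induction on $n$. For each step we will (a) check that the prescribed $\theta_j$ is a well-defined injective ring endomorphism of $B_{j-1}:=R[z_1;\theta_1]\cdots[z_{j-1};\theta_{j-1}]$, and (b) produce a ring map $B\to A$ sending $z_i\mapsto x_i$ and fixing $R$, which we then show is bijective via the PBW basis.

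For (a), recall that in a quasi-commutative extension we have $x_ir=c_{i,r}x_i=\sigma_i(r)x_i$ with $\delta_i=0$ (Proposition \ref{prop23}). We also have $x_jx_i=c_{i,j}x_ix_j$. To define $\theta_j$ on $B_{j-1}$, we use the universal property of Ore extensions of endomorphism type. We need a ring map $B_{j-1}\to B_{j-1}$ extending $\sigma_j$ on $R$ and sending $z_i\mapsto c_{i,j}z_i$, and we must check compatibility with the relations $z_ir=\sigma_i(r)z_i$ and $z_kz_i=\theta_k(z_i)z_k=c_{i,k}z_iz_k$ for $i<k<j$. Verifying these compatibilities amounts to identities among the constants, such as
\[
\sigma_j(\sigma_i(r))\,c_{i,j}=c_{i,j}\,\sigma_i(\sigma_j(r))\quad\text{and}\quad \sigma_j(c_{i,k})c_{k,j}c_{i,j}=c_{i,j}\sigma_i(c_{k,j})c_{i,k}.
\]
We obtain these by computing products like $x_jx_ir$ and $x_jx_kx_i$ in $A$ in two ways. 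Using associativity and the freeness of $A$ over $R$ on $\mathrm{Mon}(A)$, we then compare the coefficients of the standard monomials $x_ix_j$ and $x_ix_kx_j$. This step is the main obstacle, since it is where the (possibly noncommutative) $R$ and the non-central constants $c_{i,j}$ interact. Injectivity of $\theta_j$ follows because it sends the left $R$-basis of $B_{j-1}$ to nonzero multiples of itself, with $c_{i,j}\neq0$ and $\sigma_j$ injective. Strictly, this requires that the $c_{i,j}$ be non-zero-divisors in the relevant sense; we will check this against the hypothesis $c_{i,j}\in R\setminus\{0\}$ as it is used in \cite{skewbook}, and if necessary we will argue injectivity via leading coefficients.

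For (b), the defining relations of $B$ hold in $A$ for the elements $x_i$, so the universal property of iterated Ore extensions gives a ring homomorphism $\phi:B\to A$ with $\phi|_R=\mathrm{id}$ and $\phi(z_i)=x_i$. Both $B$ and $A$ are free left $R$-modules, on the bases $\{z^\alpha\}$ and $\mathrm{Mon}(A)$ respectively, and $\phi(z^\alpha)=x^\alpha$. Hence $\phi$ maps one basis bijectively onto the other, so $\phi$ is an isomorphism. This completes the induction and identifies $A$ with the claimed iterated skew polynomial ring.
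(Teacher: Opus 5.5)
Your proposal is correct and is essentially the standard argument behind the cited result; the paper itself gives no proof and only refers to \cite{skewbook}. You define the $\theta_j$, check by coefficient comparisons in $A$ that they respect the defining relations of $R[z_1;\theta_1]\cdots[z_{j-1};\theta_{j-1}]$, get $B\to A$, $z_i\mapsto x_i$, from the universal property, and conclude from the matching left $R$-bases. Two small fixes: expanding $x_jx_kx_i$ in two ways actually gives $c_{k,j}\,\sigma_k(c_{i,j})\,c_{i,k}=\sigma_j(c_{i,k})\,c_{i,j}\,\sigma_i(c_{k,j})$, which is exactly the compatibility $\theta_j(z_k)\theta_j(z_i)=\theta_j(c_{i,k})\theta_j(z_i)\theta_j(z_k)$ you need (your displayed version is mis-stated), and injectivity of $\theta_j$ is not needed at all, since $S[z;\theta]$ and its universal property exist for any ring endomorphism $\theta$, so the zero-divisor hedge can simply be dropped.
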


\begin{theorem}\label{PI-qc-skewPBW}
Let $\K$ be a field of positive characteristic $p$ and let 
$A = \sigma(\K[x_1])\langle x_2,\dots,x_n\rangle$ be a quasi-commutative skew 
PBW extension of $\K[x_1]$. For $i = 1,\dots,n$, denote
\[
A(i) := \K\langle x_1,\dots,x_i\rangle.
\]
Then,
\begin{enumerate}[\rm (i)]
  \item The extension $A$ satisfies the PI property if and only if, for all 
  $i = 2,\dots,n$, the restriction $\theta_i|_{Z(A(i-1))}$ has finite order, 
  where the $\theta_i$ are as in Theorem \ref{thm:quasi-commutative-iterated}.

  \item If $A$ satisfies the PI property, then $A$ is a finitely generated 
  module over its center $Z(A)$, which is a normal affine domain over $\K$. 
  Moreover, $A$ is a homologically homogeneous and GK-Cohen--Macaulay domain, 
  with
  \[
    \operatorname{gldim}(A) = \operatorname{GKdim}(A) = n.
  \]
\end{enumerate}
\end{theorem}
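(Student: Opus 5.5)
The plan is to reduce Theorem~\ref{PI-qc-skewPBW} to Theorem~\ref{Iteradas-3.3} (Brown--Zhang), using the iterated Ore presentation of quasi-commutative extensions from Theorem~\ref{thm:quasi-commutative-iterated}. Since $A=\sigma(\K[x_1])\langle x_2,\dots,x_n\rangle$ is quasi-commutative over $R=\K[x_1]$, Theorem~\ref{thm:quasi-commutative-iterated} gives a ring isomorphism
\[
A\;\cong\;\K[x_1][z_2;\theta_2]\cdots[z_n;\theta_n],
\]
with $\theta_2=\sigma_2$ on $\K[x_1]$, $\theta_j(z_i)=c_{ij}z_i$ and $\theta_j|_{\K[x_1]}=\sigma_j$. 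I will first check that this isomorphism is $\K$-linear, so that it is an isomorphism of $\K$-algebras and the intermediate subalgebras correspond. The point is that each $\sigma_j$ fixes $\K$, because $A$ is a $\K$-algebra in which $\K$ is central. Under this isomorphism, $z_i\mapsto x_i$ and $x_1\mapsto x_1$. Consequently the $\K$-subalgebra $A(i)=\K\langle x_1,\dots,x_i\rangle$ is carried onto the $i$-th step $R(i)=\K[x_1][z_2;\theta_2]\cdots[z_i;\theta_i]$. This uses the PBW basis: the subalgebra generated by $x_1,\dots,x_i$ is spanned by the standard monomials in those variables, and these are exactly a basis of the $i$-th iterated Ore step.

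Next I place this in the setting of Theorem~\ref{Iteradas-3.3}, an $n$-step iterated Ore extension $\K[x_1][x_2;\sigma_2,\delta_2]\cdots[x_n;\sigma_n,\delta_n]$ with all $\delta_i=0$. Under the identification above, $Z(A(i-1))$ corresponds to $Z(R(i-1))$, and $\theta_i|_{Z(A(i-1))}$ corresponds to the restriction of the Ore automorphism at step $i$. Part (i) of Theorem~\ref{Iteradas-3.3} then translates verbatim into part (i) here.

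For part (ii), the PI property, finite generation over the center, and normality and affineness of the center are all invariant under $\K$-algebra isomorphism. So are homological homogeneity, the GK-Cohen--Macaulay property, $\operatorname{gldim}$ and $\operatorname{GKdim}$. Hence Theorem~\ref{Iteradas-3.3}(ii) transfers directly, giving $\operatorname{gldim}(A)=\operatorname{GKdim}(A)=n$.

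The main obstacle is the bookkeeping in the first step. Theorem~\ref{Iteradas-3.3} is stated for Ore extensions, where the $\sigma_i$ are implicitly endomorphisms making $R$ a genuine iterated Ore extension, and its intermediate algebras are defined as subalgebras generated by the first $i$ variables. I must therefore confirm two things: that the $\theta_i$ are $\K$-linear, and that $A(i)$ is exactly the $i$-th stage rather than something larger, which follows from quasi-commutativity keeping products of $x_1,\dots,x_i$ inside the span of their standard monomials. If Brown--Zhang require automorphisms rather than injective endomorphisms, I would add the observation that the conditions in (i) concern only the restrictions to the centers. Otherwise I would assume bijectivity, which is the standard setting for these examples.
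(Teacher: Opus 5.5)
Your route is the paper's. Its proof invokes Theorem~\ref{thm:quasi-commutative-iterated} to write $A\cong\K[x_1][z_2;\theta_2]\cdots[z_n;\theta_n]$ and then quotes Theorem~\ref{Iteradas-3.3}. Your extra bookkeeping is correct and only makes explicit what the paper leaves tacit: $\K$-linearity of the $\theta_i$, identifying $A(i)$ with the $i$-th stage via the PBW basis, and isomorphism-invariance of everything in (ii).

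\textbf{The gap you flag at the end is real, and the paper's two-line proof does not close it either.}
\begin{itemize}
\item Theorem~\ref{Iteradas-3.3} is about Ore extensions whose $\sigma_i$ are automorphisms. This is what gives noetherianity in Lemma~\ref{LemaBrownZhang3.2} and underlies (ii).
\item The statement does not assume $A$ bijective, and then $\theta_i$ can fail to be onto. Examples: $x_2x_1=x_1^2x_2$; or $x_3x_2=x_1x_2x_3$ with $x_1$ central, where $z_2\notin\theta_3(A(2))$.
\item Your first patch does not help. Noting that (i) only involves the centres does not make the hypotheses of Theorem~\ref{Iteradas-3.3} hold. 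In the ``only if'' direction you do not yet know anything about the centres.
\item Your second patch, assuming bijectivity, proves a weaker theorem than the one stated.
\end{itemize}

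\textbf{What is missing} is a step showing that either side of (i) forces every $\theta_i$ to be an automorphism. After that, Brown--Zhang applies to both (i) and (ii). One way to do it:
\begin{itemize}
\item Suppose $A$ is PI and $d=\deg\sigma_i(x_1)\ge 2$. In $T=\K[x_1][x_i;\sigma_i]\subseteq A$, coefficients of $x_iT$ lie in $\K[\sigma_i(x_1)]$, and those of $x_1x_iT$ lie in $x_1\K[\sigma_i(x_1)]$. Nonzero degrees there are $\equiv 0$ and $\equiv 1 \pmod d$ respectively, so $x_iT\cap x_1x_iT=0$. Thus $T$ is not right Ore, which is impossible for a PI domain by Theorem~\ref{posner}. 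Hence $\deg\sigma_i(x_1)=1$.
\item Suppose some $c_{j,i}$ with $j<i$ is non-constant. The $\sigma_j,\sigma_i$ are now automorphisms, so their orders are finite by the two-step case of Theorem~\ref{Iteradas-3.3}. Take $M$ a common multiple of these orders. Then $\K[x_1]$ is central in the subalgebra generated by $\K[x_1],x_j^M,x_i^M$. That subalgebra is a quantum plane over $\K[x_1]$ with parameter of degree $M^2\deg c_{j,i}>0$. It is therefore not PI by Theorem~\ref{PI-semi-comm}.
\item Conversely, argue by induction on $i$, so that $A(i-1)$ is already a PI iterated Ore extension of automorphism type. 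Then $Z(A(i-1))$ contains $\K[x_1]^{\langle\sigma_2,\dots,\sigma_{i-1}\rangle}\neq\K$ and suitable central powers $x_j^M$. Finite order of $\theta_i$ on these forces $\deg\sigma_i(x_1)=1$ and $c_{j,i}\in\K^*$, by the same degree count.
\end{itemize}
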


\begin{proof}
Since $A$ is a quasi-commutative skew PBW extension of $\K[x_1]$, 
Theorem~\ref{thm:quasi-commutative-iterated} yields an isomorphism 
$A \cong \K[x_1][z_2;\theta_2]\cdots[z_n;\theta_n]$ as iterated skew polynomial 
rings of endomorphism type. Since $\K$ has positive characteristic, 
Theorem \ref{Iteradas-3.3} applies and yields the stated results.
\end{proof}

\begin{example}
Let $\K$ be a field of positive characteristic $p$. The multiplicative analogue 
of the Weyl algebra, realized as a skew PBW extension over $\K[x_1]$, see 
\cite[Example 3.5-(b)]{LezamaReyes2014}, 
\[
\mathcal{O}_n(\lambda_{ji}) \cong \sigma(\K[x_1])\langle x_2,\ldots,x_n\rangle,
\]
satisfies the hypotheses of Theorem \ref{PI-qc-skewPBW} with
\[
\mathcal{O}_n(\lambda_{ji}) \cong \K[x_1]\,[x_2;\sigma_2]\,[x_3;\sigma_3]
\cdots[x_n;\sigma_n],
\]
where $\sigma_j(x_i) = \lambda_{ji}\, x_i$ for $i < j$. Therefore, 
$\mathcal{O}_n(\lambda_{ji})$ satisfies the PI property if and only if all 
parameters $\lambda_{ji}$ are roots of unity.
\end{example}

\subsection*{Two-parameter quantum Heisenberg algebra $\mathcal{H}_{p,q}$}
\noindent
The two-parameter quantum Heisenberg algebra $\mathcal{H}_{p,q}$ is the 
$\K$-algebra generated by $x, y, t$ subject to the relations
\[
tx = p^{-1}xt, \qquad ty = pyt, \qquad yx - qxy = t,
\]
where $p, q \in \K^*$. This algebra admits a presentation as an iterated Ore 
extension  of the form
\[
\mathcal{H}_{p,q} \cong \K[t][x;\sigma_x][y;\sigma_y,\delta_y],
\]
where $\sigma_x(t) = pt$, $\sigma_y(t) = p^{-1}t$, $\sigma_y(x) = qx$, 
$\delta_y(t) = 0$, and $\delta_y(x) = t$, see \cite[Proposition 3.4]{Gaddis2016}.
Under these conditions, the following statement holds (compare with \cite[Proposition 3.4]{gomezgallego1}).
\begin{proposition}
Let $\K$ be a field of positive characteristic $l$ and let $p, q \in \K^*$ with 
$pq \neq 1$. The algebra $\mathcal{H}_{p,q}$ satisfies the PI property if and 
only if the parameters $p$ and $q$ are roots of unity.
\end{proposition}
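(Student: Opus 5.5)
The plan is to apply Theorem \ref{Iteradas-3.3} to the iterated Ore presentation
\[
\mathcal{H}_{p,q} \cong \K[t][x;\sigma_x][y;\sigma_y,\delta_y],
\]
which has the required form with $x_1=t$, $x_2=x$, $x_3=y$ and $\operatorname{char}(\K)=l>0$. By part (i), $\mathcal{H}_{p,q}$ is PI if and only if two conditions hold:
\[
\sigma_x|_{Z(\K[t])}=\sigma_x \text{ has finite order}, \qquad \sigma_y|_{Z(R(2))} \text{ has finite order},
\]
where $R(2)=\K[t][x;\sigma_x]$ is the quantum plane with $tx=p^{-1}xt$. The proof then reduces to computing these two restrictions.

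The first condition is immediate. Since $\sigma_x(t)=pt$, we have $\sigma_x^k(t)=p^kt$, so $\sigma_x$ has finite order if and only if $p$ is a root of unity. In particular, PI forces $p$ to be a root of unity.

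For the second condition I will compute $Z(R(2))$. Standard monomials $t^ax^b$ in the quantum plane are $q$-commuting, so a central element is a combination of central monomials, by the same argument used in Proposition \ref{centerno-root}. A monomial $t^ax^b$ is central if and only if $p^a=p^b=1$. Hence if $p$ has order $m$, then $Z(R(2))=\K[t^m,x^m]$, and $Z(R(2))=\K$ if $p$ is not a root of unity (that case is already excluded by the first condition). On $Z(R(2))$ the map $\sigma_y$ acts by
\[
\sigma_y(t^m)=p^{-m}t^m=t^m, \qquad \sigma_y(x^m)=q^mx^m .
\]
So $\sigma_y|_{Z(R(2))}$ has finite order exactly when $q^m$, equivalently $q$, is a root of unity. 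Combining the two conditions gives the claimed equivalence.

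The main obstacle is verifying that $\sigma_y$ and $\delta_y$ really define an Ore extension of $R(2)$ as written, so that Theorem \ref{Iteradas-3.3} applies. Beyond that, the only delicate step is the center computation, which only needs care to use linear independence of the monomials. The hypothesis $pq\neq1$ is not needed in this argument; I would simply carry it along from the presentation cited from \cite{Gaddis2016}.
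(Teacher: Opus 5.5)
Your proposal is correct and follows essentially the same route as the paper. Both apply Theorem \ref{Iteradas-3.3} to $\K[t][x;\sigma_x][y;\sigma_y,\delta_y]$, read off that $p$ is a root of unity from $\sigma_x$, and use $Z(R(2))=\K[t^m,x^m]$ together with $\sigma_y(x^m)=q^m x^m$ to handle $q$. Your remark that $pq\neq 1$ plays no role is consistent with the paper's proof, which never uses it.
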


\begin{proof}
Define
\[
R(1) := \K[t], \qquad R(2) := R(1)[x;\sigma_x], \qquad 
R(3) := R(2)[y;\sigma_y,\delta_y] = \mathcal{H}_{p,q}.
\]
Suppose that $\mathcal{H}_{p,q}$ satisfies the PI property. By 
Theorem \ref{Iteradas-3.3}, the restriction $\sigma_x|_{Z(R(1))}$ has finite order. Since 
$Z(\K[t]) = \K[t]$ and $\sigma_x(t) = pt$, there exists $m \ge 1$ such that 
$\sigma_x^m = \mathrm{id}|_{\K[t]}$; in particular, $p^m t = t$, so $p^m = 1$ 
and $p$ is a root of unity. Similarly, $\sigma_y|_{Z(R(2))}$ has finite order. 
Let $m = \mathsf{ord}(p)$; in $R(2)$ the relation $xt = ptx$ implies that 
$x^m t = p^m t x^m = t x^m$, so $x^m \in Z(R(2))$. Since 
$\sigma_y|_{Z(R(2))}$ has finite order, there exists $r \ge 1$ such that 
$\sigma_y^r|_{Z(R(2))} = \mathrm{id}_{Z(R(2))}$; in particular,
\[
x^m = \sigma_y^r(x^m) = (qx)^{mr} = q^{mr}x^m,
\]
which implies $q^{mr} = 1$ and hence $q$ is a root of unity.

Conversely, suppose that $p$ and $q$ are roots of unity. Let 
$m = \mathsf{ord}(p)$ and $n = \mathsf{ord}(q)$. Since $\sigma_x(t) = pt$ and 
$\sigma_x^m(t) = p^m t = t$, we have $\sigma_x^m = \mathsf{id}$ on $\K[t]$, so 
$\sigma_x|_{Z(R(1))}$ has finite order. In $R(2) = \K[t][x;\sigma_x]$, the 
relation $xt = ptx$ gives the identities
\[
x\,t^b=p^b t^b x,
\qquad
x^a t=p^a t x^a,
\qquad
t\,x^a=p^{-a}x^a t,
\qquad
x^a t^b=p^{ab}t^b x^a,
\]
for all $a$, $b\geq 0$.
Thus, a monomial $t^b x^a$ commutes with $t$ if and only if $p^a = 1$, and commutes with $x$ 
if and only if $p^b = 1$. Since $m = \mathsf{ord}(p)$, this latter holds if and only 
if $m \mid a$ and $m \mid b$. Therefore $Z(R(2)) = \K[t^m, x^m]$. Let 
$L := \mathsf{lcm}(m,n)$. Since $\sigma_y(t^m) = p^{-m}t^m$ and 
$\sigma_y(x^m) = q^m x^m$, we have
\[
\sigma_y^L(t^m) = p^{-mL}t^m = t^m, \qquad \sigma_y^L(x^m) = q^{mL}x^m = x^m.
\]
Hence $\sigma_y^L|_{Z(R(2))} = \mathrm{id}_{Z(R(2))}$, so 
$\sigma_y|_{Z(R(2))}$ has finite order. By Theorem \ref{Iteradas-3.3}, the algebra
$R(3) = \mathcal{H}_{p,q}$ satisfies the PI property.
\end{proof}

\subsection*{Double Ore extensions}

In \cite{CarvalhoLopesMatczuk2011}, Carvalho, Lopes, and Matczuk  established 
necessary and sufficient conditions for a double Ore extension to be an iterated 
Ore extension, which allows us to apply Theorem~\ref{Iteradas-3.3} to this 
class of algebras.

\begin{proposition}[{\cite[Proposition 1.2]{CarvalhoLopesMatczuk2011}}] \label{Prop12}
Consider the right double Ore extension $B = \K_P[y_1,y_2;\sigma',\delta',\tau]$. 
Then $B \cong \K[x_1][x_2;\sigma_2,d_2]$ is an iterated Ore extension, where
\[
\sigma_2(x_1) = p_{12}x_1 + \tau_2
\]
and $d_2$ is the $\sigma_2$-derivation of $\K[x_1]$ given by
\[
d_2(x_1) = p_{11}x_1^2 + \tau_1 x_1 + \tau_0.
\]
Moreover, $B$ is a double Ore extension of $\K$ if and only if $p_{12} \neq 0$.
\end{proposition}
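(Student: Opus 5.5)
The plan is to show that the right double Ore extension and the iterated Ore extension $\K[x_1][x_2;\sigma_2,d_2]$ are two quotients of one finitely presented algebra, and that the PBW bases on each side force both quotient maps to be isomorphisms.

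\emph{Step 1: unwind the definition over the field $\K$.} By the definition of a right double Ore extension (Zhang--Zhang), $B$ is generated over $\K$ by $y_1,y_2$ subject to
\[
\begin{pmatrix}y_1\\ y_2\end{pmatrix}\lambda=\sigma'(\lambda)\begin{pmatrix}y_1\\ y_2\end{pmatrix}+\delta'(\lambda),
\qquad
y_2y_1=p_{12}y_1y_2+p_{11}y_1^2+\tau_1y_1+\tau_2y_2+\tau_0 .
\]
By hypothesis, $B$ is a free left $\K$-module with basis $\{y_1^iy_2^j\}_{i,j\ge 0}$. Since $\K$ lies in the center of the $\K$-algebra $B$ (as in the trimmed example above), we get $\sigma'(\lambda)=\lambda I$ and $\delta'=0$. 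Hence the only genuine relation is the second one. Regrouping it gives
\[
y_2y_1=(p_{12}y_1+\tau_2)\,y_2+\bigl(p_{11}y_1^2+\tau_1y_1+\tau_0\bigr).
\]

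\emph{Step 2: build the Ore extension.} Let $\sigma_2$ be the $\K$-algebra endomorphism of $\K[x_1]$ determined by $x_1\mapsto p_{12}x_1+\tau_2$. Since $\K[x_1]$ is free on $x_1$, there is a unique $\sigma_2$-derivation $d_2$ of $\K[x_1]$ with $d_2(x_1)=p_{11}x_1^2+\tau_1x_1+\tau_0$; it is obtained by extending through the twisted Leibniz rule. The Ore extension $C:=\K[x_1][x_2;\sigma_2,d_2]$ then has $\K$-basis $\{x_1^ix_2^j\}$. Since $\K[x_1]$ is generated by $x_1$, the rule $x_2a=\sigma_2(a)x_2+d_2(a)$ for all $a\in\K[x_1]$ follows from the single case $a=x_1$. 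Therefore $C$ is presented by generators $x_1,x_2$ and the single relation $x_2x_1=\sigma_2(x_1)x_2+d_2(x_1)$.

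\emph{Step 3: compare both algebras with a common quotient.} Put $D:=\K\langle X_1,X_2\rangle/(X_2X_1-(p_{12}X_1+\tau_2)X_2-p_{11}X_1^2-\tau_1X_1-\tau_0)$. By Steps 1 and 2, $D$ surjects onto $B$ via $X_i\mapsto y_i$ and onto $C$ via $X_i\mapsto x_i$. The rewriting rule $X_2X_1\to\cdots$ is a reduction system: it does not increase the number of $X_2$'s, and each application strictly decreases the number of inversions ($X_2$ standing to the left of an $X_1$), so every reduction terminates. Consequently $D$ is spanned by the ordered monomials $X_1^iX_2^j$. Their images are linearly independent in $B$ (PBW basis) and in $C$ (Ore basis). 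Hence both surjections are bijective, giving $B\cong D\cong C$ with $y_i\leftrightarrow x_i$.

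\emph{Step 4: the final equivalence.} This is where I expect the main obstacle, namely getting the definitions exactly right. I would first reduce the question to the following standard fact: a right double extension is a (two-sided) double Ore extension exactly when it is also a left double extension, i.e.\ $B$ is also free as a right $\K$-module on $\{y_2^jy_1^i\}$ together with the mirror-image relation. If $p_{12}\neq 0$, the relation can be solved for $y_1y_2$ as $p_{12}^{-1}\bigl(y_2y_1-p_{11}y_1^2-\tau_1y_1-\tau_2y_2-\tau_0\bigr)$. This yields the left-type relation, and the argument of Step 3 with the roles of $y_1$ and $y_2$ reversed gives the left basis; equivalently, $\sigma_2$ is then an automorphism. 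If $p_{12}=0$, no relation expresses $y_1y_2$ in terms of the other words, so the left structure fails. I would try to confirm this by exhibiting a linear dependence among the monomials $y_2^jy_1^i$ in low degree, or else by quoting the Zhang--Zhang criterion that the matrix $P$ must satisfy $p_{12}\neq 0$.
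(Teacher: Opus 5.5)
The paper gives no proof of this statement; it is quoted from Carvalho--Lopes--Matczuk. Your argument therefore has to stand on its own, and its architecture is sound. Over $\K$ the structure maps are forced to be $\sigma'(\lambda)=\lambda I$ and $\delta'=0$, and the single relation is exactly $y_2y_1=\sigma_2(y_1)y_2+d_2(y_1)$. Comparing $B$ and $C=\K[x_1][x_2;\sigma_2,d_2]$ through the common quotient $D$ works; note that $C$ exists even when $p_{12}=0$, where $\sigma_2$ is not injective. Checking the left double extension axioms directly for the last assertion is a legitimate, self-contained alternative to appealing to the general Zhang--Zhang theory. Two steps, however, are not right as written.

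\emph{Step 3, termination.} The claim that each rewriting step strictly lowers the number of inversions is false when $p_{11}\neq 0$, because the $p_{11}X_1^2$ term can raise it. For example, $X_2^3X_1\mapsto X_2^2X_1^2$ takes the inversion count from $3$ to $4$. The repair is to order words lexicographically by (number of $X_2$'s, number of inversions). The terms $X_1^2$, $X_1$ and $1$ lower the first entry. The terms $X_1X_2$ and $X_2$ keep the first entry and lower the second.

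\emph{Step 4, the case $p_{12}=0$.} Here you give only a heuristic and a plan, but the required dependence is immediate. The defining relation itself,
\[
y_2y_1-p_{11}y_1^2-\tau_1y_1-\tau_2y_2-\tau_0=0,
\]
is a nontrivial linear relation among the distinct words $y_2y_1,\ y_1^2,\ y_1,\ y_2,\ 1$, all of the form $y_2^jy_1^i$. So these words are not a basis, and $B$ is not a left double extension on $\{y_1,y_2\}$.

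\emph{Step 4, the case $p_{12}\neq 0$.} Literally swapping the roles of $y_1$ and $y_2$ in Step 3 does not produce an Ore extension, since the solved relation for $y_1y_2$ still contains $y_1^2$. The linear independence of $\{y_2^jy_1^i\}$ should instead come from the fact you mention only in passing. Since $\sigma_2$ is then an automorphism, $C=\bigoplus_j x_2^j\K[x_1]$ as a right $\K[x_1]$-module. Equivalently, run Steps 2--3 in the opposite algebra, where the solved relation has the same shape as the original one with $p_{12}$ replaced by $p_{12}^{-1}$.
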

An immediate consequence of the above assumptions is the following.
\begin{theorem}\label{PI-double-Ore}
Let $\K$ be a field of positive characteristic $p$ and let
\[
B = \K_P[y_1,y_2;\sigma',\delta',\tau]
\]
be a right double Ore extension of $\K$ with 
$\tau = (\tau_0,\tau_1,\tau_2) \in \K^3$. Then:
\begin{enumerate}[\rm (i)]
  \item The algebra $B$ satisfies the PI property if and only if the restriction 
  $\sigma_2|_{Z(\K[x_1])}$ (where $\sigma_2$ is as in Proposition \ref{Prop12} 
  and $Z(\K[x_1]) = \K[x_1]$) has finite order.

  \item If $B$ satisfies the PI property, then $B$ is a finitely generated 
  module over its center $Z(B)$, which is a normal affine domain over $\K$. 
  Moreover, $B$ is a homologically homogeneous and GK-Cohen--Macaulay domain, 
  with
  \[
    \operatorname{gldim}(B) = \operatorname{GKdim}(B) = 2.
  \]
\end{enumerate}
\end{theorem}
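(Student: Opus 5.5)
The plan is to reduce the statement directly to Theorem \ref{Iteradas-3.3} with $n=2$, via the identification of Proposition \ref{Prop12}. First, by Proposition \ref{Prop12}, since $B$ is a right double Ore extension of $\K$ (so $p_{12}\neq 0$), there is a $\K$-algebra isomorphism
\[
B\cong \K[x_1][x_2;\sigma_2,d_2],
\]
where $\sigma_2(x_1)=p_{12}x_1+\tau_2$ and $d_2(x_1)=p_{11}x_1^2+\tau_1x_1+\tau_0$. I will check that this is an Ore extension of the required shape: $\sigma_2$ is the $\K$-algebra endomorphism of $\K[x_1]$ determined by $x_1\mapsto p_{12}x_1+\tau_2$. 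Since $p_{12}\neq 0$, this substitution is an affine change of variable, hence an automorphism, and in particular injective. $d_2$ is the $\sigma_2$-derivation determined by its value on $x_1$. Thus $B$ is an iterated Ore extension of two steps of the form (\ref{eq:iterated-Ore}) with $R(1)=\K[x_1]$ and $R(2)=B$.

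Second, since $\mathrm{char}(\K)=p>0$, Theorem \ref{Iteradas-3.3}(i) applies with $n=2$. It says that $B$ is PI if and only if $\sigma_2|_{Z(R(1))}$ has finite order. Because $R(1)=\K[x_1]$ is commutative, $Z(R(1))=\K[x_1]$, which gives (i) verbatim. The PI property is invariant under ring isomorphism, so it transfers from the iterated Ore extension to $B$.

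Third, for (ii), Theorem \ref{Iteradas-3.3}(ii) gives that $B$ is finitely generated over its center, and that the center is a normal affine domain. It also gives that $B$ is homologically homogeneous and GK-Cohen--Macaulay with $\operatorname{gldim}=\operatorname{GKdim}=2$. All of these are isomorphism invariants, so they pass to $B$.

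The only real obstacle is bookkeeping: making sure the isomorphism of Proposition \ref{Prop12} is a $\K$-algebra isomorphism compatible with the generators, and that the hypothesis "double Ore extension of $\K$" indeed forces $p_{12}\neq0$. Once this is confirmed, the rest is a direct citation. Optionally, one can make (i) concrete. If $\tau_2\neq 0$ and $p_{12}=1$, then $\sigma_2^{k}(x_1)=x_1+k\tau_2$, which has order $p$. If $p_{12}\neq1$, then $\sigma_2$ is conjugate to $x_1\mapsto p_{12}x_1$ and has finite order exactly when $p_{12}$ is a root of unity. I would only remark on this rather than include it in the proof.
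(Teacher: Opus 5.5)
Your proposal follows the paper's proof: Proposition~\ref{Prop12} gives $B\cong\K[x_1][x_2;\sigma_2,d_2]$, and Theorem~\ref{Iteradas-3.3} with $n=2$, $R(1)=\K[x_1]$ and $Z(R(1))=\K[x_1]$ gives (i) and (ii). Your check that $\sigma_2$ is an affine automorphism is a step the paper's one-line proof leaves implicit.

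The step you set aside as bookkeeping, however, cannot be confirmed. Being a \emph{right} double Ore extension does not force $p_{12}\neq 0$. By Proposition~\ref{Prop12}, $p_{12}\neq 0$ characterizes (two-sided) double Ore extensions of $\K$, and right ones with $p_{12}=0$ exist. An example is $B=\K\langle y_1,y_2\rangle/(y_2y_1)$, where $\sigma_2(x_1)=0$. There $\sigma_2$ is a non-injective idempotent, so it has no finite order, and Theorem~\ref{Iteradas-3.3} does not apply.

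In fact the statement fails for this $B$. The assignment $y_1\mapsto\begin{pmatrix}s&1\\0&0\end{pmatrix}$, $y_2\mapsto\begin{pmatrix}0&0\\0&t\end{pmatrix}$ embeds $B$ into upper triangular $2\times 2$ matrices over $\K[s,t]$. Hence $B$ is PI, although it is not a domain and $\sigma_2$ has no finite order.

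So $p_{12}\neq 0$ must be added as an explicit hypothesis; the paper's proof also uses it tacitly. Under that hypothesis your argument is complete. Your closing remark that (i) is then equivalent to $p_{12}$ being a root of unity is correct.
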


\begin{proof}
By Proposition \ref{Prop12}, there is a $\K$-algebra isomorphism 
$B \cong \K[x_1][x_2;\sigma_2,d_2]$. Applying Theorem \ref{Iteradas-3.3} with 
$R(1) := \K[x_1]$ yields the stated results.
\end{proof}

\begin{theorem}[{\cite[Theorem 2.2]{CarvalhoLopesMatczuk2011}}]\label{teo:Carvalho-2-2}
Let $A, B$ be $\K$-algebras such that $B$ is an extension of $A$. Suppose 
$P = \{p_{12},p_{11}\} \subseteq \K$, $\tau = \{\tau_0,\tau_1,\tau_2\} \subseteq A$, 
$\sigma \colon A \to M_{2\times 2}(A)$ is an algebra morphism, and 
$\delta \colon A \to M_{2\times 1}(A)$ is a $\sigma$-derivation.
\begin{enumerate}[\rm (a)]
\item The following conditions are equivalent:
  \begin{enumerate}[\rm (i)]
  \item $B = A_P[y_1,y_2;\sigma,\delta,\tau]$ is a right double Ore extension 
  of $A$ that can be presented as an iterated Ore extension 
  $A[y_1;\sigma_1,d_1][y_2;\sigma_2,d_2]$.

  \item $B = A_P[y_1,y_2;\sigma,\delta,\tau]$ is a right double Ore extension 
  of $A$ with $\sigma_{12} = 0$.

  \item $B = A[y_1;\sigma_1,d_1][y_2;\sigma_2,d_2]$ is an iterated Ore 
  extension such that
  \[
    \sigma_2(A) \subseteq A, \qquad \sigma_2(y_1) = p_{12}y_1 + \tau_2,
  \]
  \[
    d_2(A) \subseteq Ay_1 + A, \qquad d_2(y_1) = p_{11}y_1^2 + \tau_1 y_1 + \tau_0,
  \]
  for certain $p_{ij} \in \K$ and $\tau_i \in A$. The maps 
  $\sigma, \delta, \sigma_i, d_i$, $i = 1,2$, are related by
  \[
    \sigma = \begin{bmatrix} \sigma_1 & 0\\[2pt] \sigma_{21} & \sigma_2|_A 
    \end{bmatrix},
    \qquad
    \delta(a) = \begin{bmatrix} d_1(a)\\[2pt] d_2(a) - \sigma_{21}(a)y_1 
    \end{bmatrix},
    \qquad \text{for all } a \in A.
  \]
  \end{enumerate}

\item If any of the equivalent conditions from \emph{(a)} holds, then $B$ is 
a double Ore extension of $A$ if and only if $\sigma_{11}:=\sigma_1$ and $\sigma_{22}:=\sigma_{2}|_A$ are 
automorphisms of $A$ and $p_{12} \neq 0$.
\end{enumerate}
\end{theorem}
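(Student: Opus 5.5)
This theorem is quoted from \cite[Theorem 2.2]{CarvalhoLopesMatczuk2011}, so the plan is to rebuild its proof from the defining relations of a right double Ore extension. Recall that in $B=A_P[y_1,y_2;\sigma,\delta,\tau]$ we have
\[
y_2y_1=p_{12}y_1y_2+p_{11}y_1^2+\tau_1y_1+\tau_2y_2+\tau_0,
\qquad
\begin{pmatrix}y_1\\ y_2\end{pmatrix}a=\sigma(a)\begin{pmatrix}y_1\\ y_2\end{pmatrix}+\delta(a),
\]
and $B$ is a free left $A$-module with basis $\{y_1^iy_2^j\}$. The whole argument compares these relations with the defining rules of an iterated Ore extension $A[y_1;\sigma_1,d_1][y_2;\sigma_2,d_2]$.

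For (i)$\Rightarrow$(ii), suppose $B=A[y_1;\sigma_1,d_1][y_2;\sigma_2,d_2]$. Then $y_1a=\sigma_1(a)y_1+d_1(a)$ with $\sigma_1(a),d_1(a)\in A$. Comparing with the first row $y_1a=\sigma_{11}(a)y_1+\sigma_{12}(a)y_2+\delta_1(a)$, uniqueness of coefficients in the basis $\{y_1^iy_2^j\}$ forces $\sigma_{12}=0$.

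For (ii)$\Rightarrow$(iii), assume $\sigma_{12}=0$. Then $y_1a=\sigma_{11}(a)y_1+\delta_1(a)$, so $C:=A\langle y_1\rangle$ is spanned by the $y_1^i$. Freeness of $B$ gives $C=A[y_1;\sigma_1,d_1]$ with $\sigma_1:=\sigma_{11}$ and $d_1:=\delta_1$. Next set $\sigma_2(a):=\sigma_{22}(a)$ and $d_2(a):=\sigma_{21}(a)y_1+\delta_2(a)$ on $A$, and set $\sigma_2(y_1):=p_{12}y_1+\tau_2$ and $d_2(y_1):=p_{11}y_1^2+\tau_1y_1+\tau_0$. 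The key step, and the main obstacle, is to show these data extend to an endomorphism $\sigma_2$ of $C$ and a $\sigma_2$-derivation $d_2$ of $C$ satisfying $y_2c=\sigma_2(c)y_2+d_2(c)$. I would handle this with the universal property of Ore extensions. Extending $\sigma_2$ to $C$ requires the compatibility $\sigma_2(y_1)\sigma_2(a)=\sigma_2\sigma_1(a)\sigma_2(y_1)+\sigma_2(d_1(a))$. This should follow by expanding the two sides of the associativity identity $y_2(y_1a)=(y_2y_1)a$ in $B$ and comparing the coefficients of $y_1y_2$ and $y_2$. A cleaner route is to let $\sigma_2$ and $d_2$ be defined on $C$ by the rule $y_2c=\sigma_2(c)y_2+d_2(c)$, which already holds on the generators $A$ and $y_1$. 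The set of $c$ for which $y_2c\in Cy_2+C$ is closed under products, so $\sigma_2$ and $d_2$ are well defined by freeness and are automatically multiplicative, respectively $\sigma_2$-derivations. Freeness of $B$ over $C$ on the powers $y_2^j$ then gives $B=C[y_2;\sigma_2,d_2]$, and the conditions in (iii) hold by construction.

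For (iii)$\Rightarrow$(i), the given iterated extension satisfies $y_1a\in Ay_1+A$, $y_2a\in Ay_2+Ay_1+A$, and $y_2y_1=\sigma_2(y_1)y_2+d_2(y_1)$. This is exactly the right double Ore relation with $\sigma_{12}=0$, and the matrix formulas for $\sigma$ and $\delta$ are read off directly. The basis $\{y_1^iy_2^j\}$ is inherited.

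Part (b) requires additionally that $B$ be a left double Ore extension. The backward direction $y_2,y_1$ presentation exists precisely when the matrix $\sigma$ is invertible, which for this triangular $\sigma$ means $\sigma_{11}$ and $\sigma_{22}$ are automorphisms, together with $p_{12}\neq0$ so that the relation can be solved for $y_1y_2$. I would verify this by solving the defining relations for $ay_i$ and $y_1y_2$ in terms of left coefficients, using invertibility.
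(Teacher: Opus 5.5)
The paper gives no proof of this statement; it is quoted from \cite{CarvalhoLopesMatczuk2011}, so your proposal has to stand on its own. Part (a) does. In (i)$\Rightarrow$(ii) and (iii)$\Rightarrow$(i), comparing coefficients in the left basis $\{y_1^iy_2^j\}$ is correct. In (ii)$\Rightarrow$(iii), your ``cleaner route'' is the standard recognition lemma: if $S\supseteq R$ is free as a left $R$-module on $1,x,x^2,\dots$ and $xR\subseteq Rx+R$, then $S=R[x;\sigma,\delta]$. You apply it first to $C=\sum_i Ay_1^i$ over $A$ and then to $B=\bigoplus_j Cy_2^j$ over $C$, which makes the compatibility check via $y_2(y_1a)=(y_2y_1)a$ unnecessary.

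Part (b), however, is asserted rather than proved, and this is a genuine gap.

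\emph{Sufficiency.} ``Solving the relations'' only gives two of the required conditions. Surjectivity of $\sigma_{11}$ and $\sigma_{22}$ gives $Ay_1+Ay_2\subseteq y_1A+y_2A+A$. Using $p_{12}\neq0$ you also get a relation $y_1y_2=p_{12}^{-1}y_2y_1+\cdots$ with right coefficients. The condition that actually carries weight in a left double Ore extension is never addressed: $B$ must be a free right $A$-module on $\{y_2^iy_1^j\}$. One way to get it is to show that $\sigma_2$ is an automorphism of $C=A[y_1;\sigma_1,d_1]$.
\begin{enumerate}
\item[$\cdot$] $\sigma_2$ is surjective, since $\sigma_2|_A=\sigma_{22}$ is onto and $y_1=p_{12}^{-1}(\sigma_2(y_1)-\tau_2)$.
\item[$\cdot$] $\sigma_2$ is injective, since $\sigma_2(ay_1^n)=\sigma_{22}(a)p_{12}^ny_1^n+(\text{lower terms})$.
\end{enumerate}
Then $C$ is right free on $\{y_1^j\}$ over $A$ because $\sigma_1$ is bijective, and $B=C[y_2;\sigma_2,d_2]$ is right free on $\{y_2^i\}$ over $C$.

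\emph{Necessity.} This direction is only asserted.
\begin{enumerate}
\item[$\cdot$] That $p_{12}\neq0$ is easy. If $p_{12}=0$, then $y_2y_1\in Ay_1^2+Ay_1+Ay_2+A$. The left relation expresses $y_1y_2$ through $y_2y_1$, a scalar multiple of a square of a generator, and degree $\le1$ terms with right coefficients. Those terms can be rewritten with left coefficients using $y_1A+y_2A\subseteq Ay_1+Ay_2+A$. This would put the basis monomial $y_1y_2$ in the left span of other basis monomials, a contradiction.
\item[$\cdot$] For $\sigma_{11}$ and $\sigma_{22}$, the bimodule $(A+Ay_1+Ay_2)/A$ is free on $\bar y_1,\bar y_2$ on both sides. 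That yields only that $\sigma_{11}$ is injective and $\sigma_{22}$ is surjective. Surjectivity of $\sigma_{11}$ and injectivity of $\sigma_{22}$ need a further argument.
\end{enumerate}
Your shortcut, ``$\sigma$ invertible, hence for triangular $\sigma$ the diagonal entries are automorphisms,'' is not automatic. Invertibility of $\sigma$ involves (at least) invertibility of a triangular $2\times2$ matrix over $\mathrm{End}_\K(A)$. Over a non-Dedekind-finite ring such a matrix can be invertible while its diagonal entries are not. For example, take $ts=1$ and $e=1-st\neq0$; then $\left(\begin{smallmatrix} s & e\\ 0 & t\end{smallmatrix}\right)$ is invertible with inverse $\left(\begin{smallmatrix} t & 0\\ e & s\end{smallmatrix}\right)$. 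So you must genuinely use the multiplicativity of $\sigma$ and of its inverse together with the freeness of $B$, or give a precise reference.

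Finally, being a left double Ore extension is not the same as admitting a ``backward'' presentation $A[y_2;\ldots][y_1;\ldots]$. That is the different criterion of Theorem~\ref{thm:2.4}, and your wording conflates the two.
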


\begin{theorem}\label{PI-double-Ore2}
Let $\K$ be a field of positive characteristic $p$, let $A = \K[t]$, and let 
$B = A_P[y_1,y_2;\sigma,\delta,\tau]$ be a right double Ore extension of $A$ 
with $\sigma_{12} = 0$ ($\sigma, \delta, \tau$, and $P$ as in 
Theorem~\ref{teo:Carvalho-2-2}). Then,
\begin{enumerate}[\rm (i)]
  \item The algebra $B$ satisfies the PI property if and only if the restrictions 
  $\sigma_1|_{\K[t]}$ and $\sigma_2|_{Z(\K\langle t,y_1\rangle)}$ have finite order.

  \item If $B$ satisfies the PI property, then $B$ is a finitely generated 
  module over its center $Z(B)$, which is a normal affine domain over $\K$. 
  Moreover, $B$ is a homologically homogeneous and GK-Cohen--Macaulay domain, 
  with
  \[
    \operatorname{gldim}(B) = \operatorname{GKdim}(B) = 3.
  \]
\end{enumerate}
\end{theorem}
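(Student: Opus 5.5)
The plan is to reduce Theorem \ref{PI-double-Ore2} to Theorem \ref{Iteradas-3.3}, exactly as was done for Theorem \ref{PI-double-Ore}. We use Theorem \ref{teo:Carvalho-2-2} in place of Proposition \ref{Prop12}.

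Since $\sigma_{12}=0$, the equivalence (a)(ii)$\Leftrightarrow$(a)(iii) of Theorem \ref{teo:Carvalho-2-2} presents $B$ as an iterated Ore extension
\[
B \cong \K[t][y_1;\sigma_1,d_1][y_2;\sigma_2,d_2],
\]
with $\sigma_1=\sigma_{11}$ an endomorphism of $A=\K[t]$. Relabelling $x_1:=t$, $x_2:=y_1$ and $x_3:=y_2$ puts $B$ in the form (\ref{eq:iterated-Ore}) with $n=3$. The intermediate rings are $R(1)=\K[t]$ and $R(2)=\K\langle t,y_1\rangle=\K[t][y_1;\sigma_1,d_1]$.

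Theorem \ref{Iteradas-3.3}(i) then says that $B$ is PI if and only if $\sigma_1|_{Z(R(1))}$ and $\sigma_2|_{Z(R(2))}$ both have finite order. Since $R(1)=\K[t]$ is commutative, $Z(R(1))=\K[t]$, so the first condition is exactly that $\sigma_1|_{\K[t]}$ has finite order. The second condition is literally $\sigma_2|_{Z(\K\langle t,y_1\rangle)}$. This gives (i). Part (ii) is Theorem \ref{Iteradas-3.3}(ii) with $n=3$, so $\operatorname{gldim}(B)=\operatorname{GKdim}(B)=3$.

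The delicate step is verifying that the hypotheses of Theorem \ref{Iteradas-3.3} are met by this presentation. Two points need checking.

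First, each step must be a genuine Ore extension in the sense used by Brown and Zhang. For this we take $\sigma_1$ and $\sigma_2$ to be injective, which holds for instance when $B$ is a double Ore extension, by Theorem \ref{teo:Carvalho-2-2}(b); otherwise it is the standing convention.

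Second, $\sigma_2$ must be an endomorphism of $R(2)$ that preserves $Z(R(2))$, so that the restriction in (i) is well defined. This is built into the hypotheses of Theorem \ref{Iteradas-3.3}, since $\sigma_2$ is an automorphism or endomorphism of $R(2)$; the "finite order" condition there is understood in the same sense. I would simply note this and cite the theorem, as in the proofs of Theorems \ref{PI-qc-skewPBW} and \ref{PI-double-Ore}.
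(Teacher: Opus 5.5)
Your proposal is the paper's proof. Theorem~\ref{teo:Carvalho-2-2} gives $B\cong\K[t][y_1;\sigma_1,d_1][y_2;\sigma_2,d_2]$, and Theorem~\ref{Iteradas-3.3} is then applied with $n=3$, $R(1)=\K[t]$ and $R(2)=\K\langle t,y_1\rangle$; the paper checks no hypotheses at all.

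In your side remarks, however, the condition you need is that $\sigma_1,\sigma_2$ be automorphisms, not merely injective. That is what guarantees $\sigma_2(Z(R(2)))\subseteq Z(R(2))$, so that the restriction in (i) makes sense; an injective endomorphism need not preserve the center. In the double Ore case it does hold, by Theorem~\ref{teo:Carvalho-2-2}(b) together with $\sigma_2(y_1)=p_{12}y_1+\tau_2$ and $p_{12}\neq 0$. Without some such assumption the statement as written can fail. For example, $\sigma_1(t)=0$, $d_1=0$, $\sigma_2=\mathrm{id}$, $d_2=0$ gives a ring that embeds in $2\times 2$ upper triangular matrices over a commutative ring. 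It is therefore PI, yet it is not a domain and $\sigma_1$ has no finite order.
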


\begin{proof}
By Theorem \ref{teo:Carvalho-2-2}, the algebra $B$ admits the iterated Ore extension 
presentation $$\K[t][y_1;\sigma_1,d_1][y_2;\sigma_2,d_2].$$ Applying 
Theorem \ref{Iteradas-3.3} with $R(1):= \K[t]$ and 
$R(2):= \K\langle t,y_1\rangle$ yields the stated results.
\end{proof}

\begin{example}[{\cite[p. 2685]{Zhang}}] \label{oreejm4}
The $\K$-algebra $A^4 := A^4(a,b,c)$, where $a,b,c \in \K$ and $b \neq -1$, is 
defined by the relations
\begin{align*}
x_2 x_1 & = x_1 x_2 + x_1^2 + atx_1 + \dfrac{b}{1+b}tx_2 + ct^2,\\[6pt]
x_1 t & = tx_1 + bt^2,\\[6pt]
x_2 t & = (2+2b^{-1})tx_1 + cx_2.
\end{align*}
This is a right double Ore extension of $\K[t]$, with
\[
\sigma(t) = \begin{pmatrix} t & 0 \\ (2+2b^{-1})t & t \end{pmatrix}, \qquad
\delta(t) = \begin{pmatrix} bt^2 \\ 0 \end{pmatrix},
\]
parameter $P = (1,1)$, and tail $\tau = \left\{at, \frac{b}{1+b}t, ct^2\right \}$. Since 
$\sigma_{12} = 0$, Theorem \ref{PI-double-Ore2} applies and, assuming the base 
field has positive characteristic, it follows that $B$ satisfies the PI property if and only if the restriction 
$\sigma_2|_{Z(\K\langle t,x_1\rangle)}$ has finite order.
\end{example}
The following result from \cite{CarvalhoLopesMatczuk2011} provides an additional characterization so that a double Ore extension can be written as an iterated Ore extension. This will allow us to determine the conditions under which a particular class of such algebras satisfies the PI property.
\begin{theorem}[{\cite[Theorem 2.4]{CarvalhoLopesMatczuk2011}}]\label{thm:2.4}
Let $B = A_P[y_1,y_2;\sigma,\delta,\tau]$ be a right double Ore extension of the 
$\K$-algebra $A$, where $P = \{p_{12},p_{11}\} \subseteq \K$, 
$\tau = \{\tau_0,\tau_1,\tau_2\} \subseteq A$, 
$\sigma \colon A \to M_{2\times 2}(A)$ is an algebra morphism, and 
$\delta \colon A \to M_{2\times 1}(A)$ is a $\sigma$-derivation. Then $B$ can 
be presented as the iterated Ore extension 
$A[y_2;\sigma'_2,d_2'][y_1;\sigma'_1,d_1']$ if and only if $\sigma_{21} = 0$, 
$p_{12} \neq 0$, and $p_{11} = 0$.
\end{theorem}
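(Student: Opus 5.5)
This statement, Theorem~\ref{thm:2.4}, is quoted from \cite{CarvalhoLopesMatczuk2011}. To prove it I would mirror the argument for Theorem~\ref{teo:Carvalho-2-2}, interchanging the roles of $y_1$ and $y_2$. Recall that a right double Ore extension $B=A_P[y_1,y_2;\sigma,\delta,\tau]$ has the following structure:
\begin{enumerate}
\item $B$ is a free left $A$-module with basis $\{y_1^iy_2^j\}$;
\item the commutation with $A$ is given by $\begin{pmatrix}y_1\\ y_2\end{pmatrix}a=\sigma(a)\begin{pmatrix}y_1\\ y_2\end{pmatrix}+\delta(a)$, where $\sigma=(\sigma_{ij})$;
\item the relation between the variables is $y_2y_1=p_{12}y_1y_2+p_{11}y_1^2+\tau_1y_1+\tau_2y_2+\tau_0$.
\end{enumerate}

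For the direction ``$\Rightarrow$'', suppose $B=A[y_2;\sigma'_2,d'_2][y_1;\sigma'_1,d'_1]$. Then the subring $A[y_2]$ must be closed under commutation with $A$. Since $y_2a=\sigma_{21}(a)y_1+\sigma_{22}(a)y_2+\delta_2(a)$, the term $\sigma_{21}(a)y_1$ lies outside $A+Ay_2$ unless $\sigma_{21}(a)=0$, so freeness forces $\sigma_{21}=0$.

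Next, in the iterated presentation $y_1y_2=\sigma'_1(y_2)y_1+d'_1(y_2)$ with $\sigma'_1(y_2),d'_1(y_2)\in A[y_2]$. Rewrite the defining relation in the form $y_1y_2=p_{12}^{-1}(y_2y_1-p_{11}y_1^2-\tau_1y_1-\tau_2y_2-\tau_0)$. In the basis $\{y_2^jy_1^i\}$ of the iterated extension, the right-hand side of $y_1y_2$ has $y_1$-degree at most $1$. Comparing with the relation therefore forces $p_{11}=0$: otherwise the term $y_1^2$ has no counterpart, since the relation is exactly one linear relation between $y_2y_1$, $y_1y_2$ and $y_1^2$. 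It also forces $p_{12}\neq0$: if $p_{12}=0$, then $y_2y_1$ lies in $Ay_1^2+Ay_1+Ay_2+A$, which contradicts the linear independence of $y_2y_1$ from these elements in the iterated extension.

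Making this comparison rigorous is the main obstacle, because the two PBW bases $\{y_1^iy_2^j\}$ and $\{y_2^jy_1^i\}$ must be related. I would handle it with the filtration by total degree in $y_1,y_2$ (cf.\ Proposition~\ref{filtracion}). In the associated graded ring, the top-degree relation reads $\bar y_2\bar y_1=p_{12}\bar y_1\bar y_2+p_{11}\bar y_1^2$. Freeness on $\{\bar y_2^j\bar y_1^i\}$ in degree $2$ then gives the two conditions: if $p_{12}=0$ or $p_{11}\neq0$, the degree-$2$ component has too few independent monomials.

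For the direction ``$\Leftarrow$'', assume $\sigma_{21}=0$, $p_{12}\neq0$ and $p_{11}=0$. First set $\sigma'_2=\sigma_{22}$ and $d'_2=\delta_2$ on $A$. Because $\sigma_{21}=0$, the commutation rule becomes $y_2a=\sigma_{22}(a)y_2+\delta_2(a)$, so $A[y_2;\sigma'_2,d'_2]$ is a well-defined Ore extension; the $\sigma$-derivation identities come from those of $\delta$. Then extend to $A[y_2]$ by
\[
\sigma'_1(y_2)=p_{12}^{-1}(y_2-\tau_2), \qquad d'_1(y_2)=-p_{12}^{-1}\tau_0,
\]
together with the corresponding twist $\sigma'_1(a)=\sigma_{11}(a)$, $d'_1(a)=\delta_1(a)-\sigma_{12}(a)\cdots$ (with the $\sigma_{12}$ term moved to the correct side). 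The $\tau_1y_1$ term is absorbed by adjusting $\sigma'_1(y_2)$ to $p_{12}^{-1}(y_2-\tau_2)-\ldots$, i.e.\ by solving $y_1y_2=\sigma'_1(y_2)y_1+d'_1(y_2)$ from the defining relation, which is legitimate because $p_{12}$ is invertible.

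Finally I would check that $\sigma'_1$ is a ring endomorphism and $d'_1$ a $\sigma'_1$-derivation of $A[y_2]$. This reduces to compatibility with the relations $y_2a=\cdots$, and those compatibility conditions are exactly the consistency conditions in the definition of a right double Ore extension. The resulting iterated extension maps onto $B$, and it is bijective because both rings are free on corresponding bases.
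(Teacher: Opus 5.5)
The paper only quotes this result from \cite{CarvalhoLopesMatczuk2011} and gives no proof, so your argument can only be judged on its own terms. The ``$\Leftarrow$'' direction has a genuine gap. The sentence ``it is bijective because both rings are free on corresponding bases'' presupposes that $\{y_2^jy_1^i\}$ is left $A$-linearly independent in $B$. But the only basis you are given is $\{y_1^iy_2^j\}$, and passing from one basis to the other is exactly the nontrivial step; it is where $p_{12}\neq0$ and $p_{11}=0$ are really used.

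The filtration you mention belongs here:
\begin{itemize}
\item $F_n:=\sum_{i+j\le n}Ay_1^iy_2^j$ is an algebra filtration of $B$. Induct on $c$ to show $y_2y_1^c\in F_{c+1}$; its top-degree coefficients are scalars, so no circularity arises from $\sigma_{12}$.
\item $\mathsf{gr}(B)$ is free on $\{\bar y_1^i\bar y_2^j\}$. If $p_{11}=0$, then $\bar y_2\bar y_1=p_{12}\bar y_1\bar y_2$, so $y_2^jy_1^i$ has principal symbol $p_{12}^{ij}\bar y_1^i\bar y_2^j$ in $F_{i+j}/F_{i+j-1}$.
\item A top-degree comparison then gives independence, since $p_{12}\neq0$.
\end{itemize}
Put $C:=\sum_jAy_2^j$. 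With independence in hand, and with $y_1C\subseteq Cy_1+C$ (which also gives $B=\sum_iCy_1^i$), $B$ is free over $C$ on $\{y_1^i\}$.

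You then no longer need your unsupported claim that the endomorphism and derivation identities for $\sigma_1',d_1'$ are ``exactly the consistency conditions''. Whenever a ring is free over a subring $C$ on the powers of $x$ and $xC\subseteq Cx+C$, associativity and uniqueness of coefficients force the maps defined by $xc=\sigma(c)x+\delta(c)$ to be an endomorphism and a $\sigma$-derivation. The same remark, using $\sigma_{21}=0$, gives $C=A[y_2;\sigma_{22},\delta_2]$.

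Your formulas also swap $\tau_1$ and $\tau_2$. From $p_{12}y_1y_2=(y_2-\tau_1)y_1-(\tau_2y_2+\tau_0)$ one gets $\sigma_1'(y_2)=p_{12}^{-1}(y_2-\tau_1)$ and $d_1'(y_2)=-p_{12}^{-1}(\tau_2y_2+\tau_0)$. On $A$, simply $\sigma_1'=\sigma_{11}$ and $d_1'(a)=\sigma_{12}(a)y_2+\delta_1(a)$, with no sign change.

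In ``$\Rightarrow$'', by contrast, the obstacle you flag does not exist, and the fix you propose for it is wrong. Your argument for $\sigma_{21}=0$ is fine. Under this hypothesis $\{y_1^i\}$ is already a left $C$-basis of $B$, so substituting $y_1y_2=\sigma_1'(y_2)y_1+d_1'(y_2)$ into the defining relation gives
\[
y_2\,y_1=\bigl(p_{12}\sigma_1'(y_2)+\tau_1\bigr)y_1+p_{11}y_1^2+p_{12}d_1'(y_2)+\tau_2y_2+\tau_0 .
\]
Comparing $C$-coefficients yields $p_{11}=0$ and $y_2=p_{12}\sigma_1'(y_2)+\tau_1$. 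The latter is impossible if $p_{12}=0$, because $y_2\notin A$. This route also avoids dividing by $p_{12}$ before knowing it is nonzero.

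Your degree-$2$ count in the associated graded ring can at most detect $p_{12}=0$, never $p_{11}\neq0$. The paper's Jordan plane $\Lambda_2(\K)$, i.e.\ $y_2y_1=y_1y_2+y_1^2$ over $A=\K$, is free on both $\{y_1^iy_2^j\}$ and $\{y_2^jy_1^i\}$. In degree $2$ the elements $y_2^2$, $y_2y_1=y_1y_2+y_1^2$ and $y_1^2$ are independent. Yet $\Lambda_2(\K)$ is not of the form $\K[y_2][y_1;\sigma,d]$, simply because $y_1y_2=y_2y_1-y_1^2\notin\K[y_2]y_1+\K[y_2]$.
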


\begin{theorem}\label{PI-double-Ore3}
Let $\K$ be a field of positive characteristic $p$, let $A = \K[t]$, and let 
$B = A_P[y_1,y_2;\sigma,\delta,\tau]$ be a right double Ore extension of $A$ 
with $\sigma_{21} = 0$, $p_{12} \neq 0$, and $p_{11} = 0$ ($\sigma, \delta, 
\tau$, and $P$ as in Theorem \ref{thm:2.4}). Then,
\begin{enumerate}[\rm (i)]
  \item The algebra $B$ satisfies the PI property if and only if the restrictions 
  $\sigma'_2|_{\K[t]}$ and $\sigma'_1|_{Z(\K\langle t,y_2\rangle)}$ have finite 
  order.

  \item If $B$ satisfies the PI property, then $B$ is a finitely generated 
  module over its center $Z(B)$, which is a normal affine domain over $\K$. 
  Moreover, $B$ is a homologically homogeneous and GK-Cohen--Macaulay domain, 
  with
  \[
    \operatorname{gldim}(B) = \operatorname{GKdim}(B) = 3.
  \]
\end{enumerate}
\end{theorem}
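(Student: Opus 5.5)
The plan mirrors the proof of Theorem \ref{PI-double-Ore2}, with the roles of $y_1$ and $y_2$ swapped.

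The hypotheses $\sigma_{21}=0$, $p_{12}\neq 0$ and $p_{11}=0$ are exactly those of Theorem \ref{thm:2.4}. That theorem therefore gives a $\K$-algebra isomorphism
\[
B\cong \K[t][y_2;\sigma'_2,d'_2][y_1;\sigma'_1,d'_1].
\]
So $B$ is a two-step iterated Ore extension of a polynomial ring in one variable, of the form (\ref{eq:iterated-Ore}) with $n=3$. The variables are ordered $x_1:=t$, $x_2:=y_2$, $x_3:=y_1$, and the intermediate rings are
\[
R(1):=\K[t],\qquad R(2):=\K\langle t,y_2\rangle\cong \K[t][y_2;\sigma'_2,d'_2].
\]
The first step is to record this identification carefully. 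In particular I must check that the $\K$-subalgebra of $B$ generated by $t$ and $y_2$ really is the first Ore step $R(2)$. This holds because the iterated Ore presentation gives a PBW-type basis $\{t^a y_2^b y_1^c\}$, so $R(2)$ embeds as the span of the monomials with $c=0$.

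Next I apply Theorem \ref{Iteradas-3.3}(i), which is available because $\K$ has positive characteristic. It says that $B$ is PI if and only if $\sigma'_2|_{Z(R(1))}$ and $\sigma'_1|_{Z(R(2))}$ both have finite order. Since $R(1)=\K[t]$ is commutative, $Z(R(1))=\K[t]$, so the first condition is exactly that $\sigma'_2|_{\K[t]}$ has finite order. The second condition is literally $\sigma'_1|_{Z(\K\langle t,y_2\rangle)}$ having finite order. This gives (i).

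Part (ii) is read off directly from Theorem \ref{Iteradas-3.3}(ii) with $n=3$. If $B$ is PI, then $B$ is a finitely generated module over its center, the center is a normal affine domain, $B$ is homologically homogeneous and GK-Cohen--Macaulay, and $\operatorname{gldim}(B)=\operatorname{GKdim}(B)=3$. All of these properties are invariant under $\K$-algebra isomorphism, so they transfer from the iterated Ore extension to $B$.

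The main point needing care is the bookkeeping in the first step. The maps $\sigma'_i$ produced by Theorem \ref{thm:2.4} must be the endomorphisms of the Ore steps over $R(1)$ and $R(2)$ respectively, and not merely maps defined on $A$. Concretely, $\sigma'_1$ must be an endomorphism of all of $\K\langle t,y_2\rangle$, so that restricting it to $Z(\K\langle t,y_2\rangle)$ makes sense. I would check this against the construction in \cite{CarvalhoLopesMatczuk2011}. Apart from this, the argument is a direct invocation of the two cited theorems.
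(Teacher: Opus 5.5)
Your argument is the paper's argument. Theorem~\ref{thm:2.4} gives $B\cong\K[t][y_2;\sigma'_2,d'_2][y_1;\sigma'_1,d'_1]$, and Theorem~\ref{Iteradas-3.3} with $R(1)=\K[t]$ and $R(2)=\K\langle t,y_2\rangle$ is then read off for (i) and (ii). The bookkeeping you add, namely $Z(\K[t])=\K[t]$ and locating $R(2)$ inside $B$ via the basis $t^ay_2^by_1^c$, is what the paper leaves implicit.

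The step that needs care is not the one you flag but the applicability of Theorem~\ref{Iteradas-3.3}, and here both you and the paper skip a hypothesis. Brown--Zhang work with Ore extensions whose twisting maps are automorphisms. For a merely \emph{right} double Ore extension, nothing forces $\sigma'_2=\sigma_{22}$ or $\sigma'_1|_{\K[t]}=\sigma_{11}$ to be bijective, or even injective, and without this the statement fails.

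For instance, let $B=\K\langle t,y_1,y_2\rangle/(y_1t,\ y_2t,\ y_2y_1-y_1y_2)$. The diamond lemma gives the basis $\{t^ay_1^iy_2^j\}$, and $y_if(t)=f(0)y_i$. So $B$ is a right double Ore extension of $\K[t]$ with $\sigma(f)=f(0)I_2$, $\delta=0$, $P=(1,0)$ and $\tau=0$; in particular $\sigma_{21}=0$, $p_{12}\neq0$ and $p_{11}=0$. Since $y_2t=0$, any presentation $\K[t][y_2;\sigma'_2,d'_2]$ has $\sigma'_2(t)=0$. Hence $\sigma'_2|_{\K[t]}$ has no finite order, and $B$ is not a domain.

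Yet $B$ is PI. The assignment $t\mapsto\bigl(\begin{smallmatrix}s&1\\0&0\end{smallmatrix}\bigr)$, $y_i\mapsto\bigl(\begin{smallmatrix}0&0\\0&u_i\end{smallmatrix}\bigr)$ defines a homomorphism into upper triangular $2\times2$ matrices over $\K[s,u_1,u_2]$. It is injective, as you can see by comparing entries of the images of the basis monomials. Such matrices satisfy $[x_1,x_2][x_3,x_4]=0$. So both (i) and (ii) fail for this $B$. It also has $\sigma_{12}=0$, so it affects Theorem~\ref{PI-double-Ore2} as well.

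The repair is to assume in addition that $\sigma_{11}$ and $\sigma_{22}$ are automorphisms of $\K[t]$. Then $\sigma'_2=\sigma_{22}$. The map $\sigma'_1$ acts as $\sigma_{11}$ on $\K[t]$ and sends $y_2\mapsto p_{12}^{-1}(y_2-\tau_1)$, which you read off from $y_2y_1=p_{12}y_1y_2+\tau_1y_1+\tau_2y_2+\tau_0$. A leading-term argument in $y_2$ then shows that $\sigma'_1$ is an automorphism of $\K\langle t,y_2\rangle$. With that hypothesis, your proof goes through verbatim.
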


\begin{proof}
By Theorem \ref{thm:2.4}, $B$ admits the iterated Ore extension presentation 
$\K[t][y_2;\sigma'_2,d_2'][y_1;\sigma'_1,d_1']$. Applying 
Theorem \ref{Iteradas-3.3} with $R(1) := \K[t]$ and 
$R(2) := \K\langle t,y_2\rangle$ yields the stated results.
\end{proof}

\begin{example}[{\cite[p. 2685]{Zhang}}]
The $\K$-algebra $A^3 := A^3(a)$, where $a \in \K$ and $a \neq 0$, is defined by 
the relations
\begin{align*}
x_2 x_1 & = x_1 x_2,\\
x_1 t & = atx_1 + tx_2,\\
x_2 t & = atx_2.
\end{align*}
This is a right double Ore extension of $\K[t]$, with
\[
\sigma(t) = \begin{pmatrix} at & 0 \\ 0 & at \end{pmatrix}, \qquad
\delta(t) = \begin{pmatrix} 0 \\ 0 \end{pmatrix},
\]
parameter $P = (1,0)$, and tail $\tau = \{0,0,0\}$. Although $\sigma_{21} = 0$, 
$p_{12} \neq 0$, and $p_{11} = 0$, so that Theorem \ref{thm:2.4} applies in 
principle, the maps $\sigma'_2$ and $\sigma'_1$ arising from that theorem are 
not determined explicitly from the given presentation, and consequently the 
criterion of Theorem \ref{PI-double-Ore3} cannot be applied directly in this 
case.
\end{example}

Zhang and Zhang \cite{Zhang} introduce the following right double Ore extension 
over $A = \K[x]$.

\begin{example}[{\cite[p. 2685]{Zhang}}]
Let $A = \K[x]$. The algebra $B^2 := B^2(a,b,c)$, with $a,b,c \in \K$ and 
$b \neq 0$, is defined by the relations
\begin{align*}
y_2 y_1 &= -y_1 y_2 + ax^2,\\
y_1 x &= b^{-1}xy_2 + cx^2,\\
y_2 x &= bxy_1 + (-bc)x^2.
\end{align*}
The morphism $\sigma$ is determined by
\[
\sigma(x) = \begin{pmatrix} 0 & b^{-1}x \\ bx & 0 \end{pmatrix},
\]
the derivation by
\[
\delta(x) = \begin{pmatrix} cx^2 \\ -bc\,x^2 \end{pmatrix},
\]
the parameter $P = (-1,0)$, and the tail $\tau = \{0,0,ax^2\}$. Zhang and Zhang 
claim that, for most values of $(a,b,c)$ (for instance, for $(a,b,c) = (1,1,0)$), 
the algebra $B^2(a,b,c)$ is not an iterated Ore extension of $\K[x]$. However, 
this assertion does not hold in general: as shown below, when the base field has 
characteristic $2$, this $B^2$ does admit a presentation as an iterated Ore extension.
\end{example}

\begin{proposition}[{\cite[Theorem 2.7]{CarvalhoLopesMatczuk2011}}]\label{prop:2.7}
Let $a,b,c \in \K$ with $b \neq 0$. The algebra $B^2 = B^2(a,b,c)$ has the 
following properties:
\begin{enumerate}[\rm (a)]
\item If $\mathrm{char}(\K) = 2$, then $B^2$ is the differential operator 
algebra
\[
B^2 \cong \K[x,z][y_2;d],
\]
where $d$ is the derivation of $\K[x,z]$ determined by
\[
d(x) = xz - bcx^2, \qquad d(z) = abx^2.
\]
In particular, such $B^2$ can be presented as an iterated Ore extension over 
$A = \K[x]$.
\item The algebra $B^2$ is a Noetherian domain.
\end{enumerate}
\end{proposition}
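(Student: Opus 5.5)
Part (a) is a change of variables in characteristic $2$, and part (b) follows from (a) together with the general theory of Ore extensions. The key observation is that in characteristic $2$ we have $-1=1$, so the relation $y_2y_1=-y_1y_2+ax^2$ becomes $y_2y_1+y_1y_2=ax^2$. This suggests replacing the generator $y_1$ by an element $z$ built from $y_1$ and $y_2$ that commutes with $x$. The relations $y_1x=b^{-1}xy_2+cx^2$ and $y_2x=bxy_1-bcx^2$ show that $\sigma(x)$ swaps the two variables up to the scalars $b^{\pm1}$. So the natural candidate is
\[
z := b\,y_1 + y_2 \qquad\text{or a close variant such as}\qquad z:=y_1+b^{-1}y_2 .
\]
I will check directly that $zx=xz$. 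Using the two relations, $zx = by_1x + y_2x = xy_2 + bcx^2 + bxy_1 - bcx^2 = x(by_1+y_2)=xz$. The $x^2$ terms cancel exactly, and no characteristic assumption is needed for this step.

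Next I compute the commutation of $y_2$ with $x$ and with $z$, expressed in the new variables. First, $y_2x = bxy_1 - bcx^2 = x(z - y_2) - bcx^2$, so that
\[
y_2x = xy_2 + \bigl(xz - bcx^2\bigr) \quad (\text{characteristic } 2,\ \text{signs immaterial}),
\]
which gives $d(x)=xz-bcx^2$. Second, $y_2z = by_2y_1 + y_2^2 = b(-y_1y_2+ax^2)+y_2^2$. In characteristic $2$ this equals $by_1y_2 + y_2^2 + abx^2 = zy_2 + abx^2$, giving $d(z)=abx^2$. This is exactly where characteristic $2$ is used: the sign in $y_2y_1=-y_1y_2$ must become $+$ so that $y_2$ commutes with $z$ modulo $A$-terms. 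Then I extend $d$ to a derivation of $\K[x,z]$; this is automatic, since any assignment on the generators of a commutative polynomial ring defines a unique derivation.

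It remains to verify that $B^2\cong \K[x,z][y_2;d]$ as algebras. I define mutually inverse homomorphisms: $y_1\mapsto b^{-1}(z+y_2)$ in one direction and $z\mapsto by_1+y_2$ in the other. Well-definedness amounts to checking that the defining relations map to relations. For the map $B^2\to \K[x,z][y_2;d]$ this is the computation above run backwards. For the reverse map, I must also show that $xz=zx$ holds in $B^2$, which was done in the first step. Because the generators correspond bijectively, the two maps compose to the identity. This shows that $B^2$ is an iterated Ore extension $\K[x][z][y_2;d]$ over $A=\K[x]$.

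For (b), the main obstacle is to avoid assuming characteristic $2$ in general. I would use the known fact that a right double Ore extension of a Noetherian domain with $p_{12}\neq0$ is a Noetherian domain, or alternatively use the PBW basis $\{x^iy_1^jy_2^k\}$ and a filtration whose associated graded ring is a skew polynomial-type algebra that is a Noetherian domain. Here $p_{12}=-1\neq 0$ and $\sigma(x)$ is invertible because $b\neq0$, so $B^2$ is a genuine double Ore extension. Filtering by total degree in $y_1,y_2$, the associated graded ring has relations $\bar y_2\bar y_1=-\bar y_1\bar y_2$ and $\bar y_i x$ given by the swap. I would show that this graded ring is a domain by passing to an Ore-type presentation over $\K[x]$ with the automorphism $x\mapsto$ its $\sigma$-image. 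Noetherianity and the domain property then lift from the associated graded ring to $B^2$ by standard filtered-ring arguments. In characteristic $2$, part (a) gives (b) immediately, since iterated Ore extensions by derivations of $\K[x,z]$ are Noetherian domains.
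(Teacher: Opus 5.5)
Your part (a) is correct and complete. Since the paper gives no proof of its own here (it only quotes Carvalho--Lopes--Matczuk), your computation is an actual verification of the statement. The element $z=by_1+y_2$ commutes with $x$ in every characteristic. In characteristic $2$ you get $y_2x=xy_2+(xz-bcx^2)$ and $y_2z=zy_2+abx^2$. The two substitutions respect all defining relations (for $y_1x=b^{-1}xy_2+cx^2$ this uses $2xz=0$), and they are mutually inverse because $2y_2=0$. This choice of $z$ produces exactly the stated $d$. The characteristic-$2$ case of (b) then follows from (a), as you say.

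The gap is in (b) when $\mathrm{char}(\K)\neq 2$. Your step ``pass to an Ore-type presentation over $\K[x]$ with the automorphism $x\mapsto$ its $\sigma$-image'' does not define anything: $\sigma(x)=\begin{pmatrix}0&b^{-1}x\\ bx&0\end{pmatrix}$ is a matrix, not an element of $\K[x]$. Moreover, in $\mathrm{gr}(B^2)$ you have $\bar y_1x=b^{-1}x\bar y_2\notin\K[x]\bar y_1+\K[x]$ and $\bar y_2x=bx\bar y_1\notin\K[x]\bar y_2+\K[x]$. So the graded ring is not an iterated Ore extension of $\K[x]$ in $\bar y_1,\bar y_2$ in either order; this is the case $\sigma_{12}\neq0\neq\sigma_{21}$, where Theorems \ref{teo:Carvalho-2-2} and \ref{thm:2.4} fail.

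What works is to make $x$ the Ore variable. The graded relations give $x\bar y_1=b^{-1}\bar y_2x$ and $x\bar y_2=b\bar y_1x$. The assignment $\tau(\bar y_1)=b^{-1}\bar y_2$, $\tau(\bar y_2)=b\bar y_1$ is an involutive automorphism of the quantum plane $\K_{-1}[\bar y_1,\bar y_2]$, since it preserves $\bar y_1\bar y_2+\bar y_2\bar y_1$. The natural surjection $\K_{-1}[\bar y_1,\bar y_2][x;\tau]\to\mathrm{gr}(B^2)$ is an isomorphism because it sends the basis $\{x^k\bar y_1^i\bar y_2^j\}$ onto a basis. Here you need the PBW property explicitly: $B^2$ is a free left $\K[x]$-module on $\{y_1^iy_2^j\}$, which is part of $B^2$ being a right double extension. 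Your sketch only mentions this in passing. With that, $\mathrm{gr}(B^2)$ is a Noetherian domain, and the standard lifting results for $\mathbb{N}$-filtered rings give (b) in every characteristic.

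If you would rather quote a general theorem on double extensions, cite a precise statement whose hypotheses include invertibility of $\sigma$. The condition $p_{12}\neq 0$ alone is not enough: the relations $y_1x=xy_1$, $y_2x=0$, $y_2y_1=y_1y_2$ define a right double extension of $\K[x]$ with $p_{12}=1$ that is not a domain.
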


\begin{theorem}
Let $\K$ be a field of characteristic $2$. Then the algebra $B^2$ satisfies the 
PI property. Moreover, $B^2$ is a finitely generated module over its center, 
which is a normal affine domain over $\K$; furthermore, $B^2$ is a homologically 
homogeneous and GK-Cohen--Macaulay domain, with
\[
\operatorname{gldim}(B^2) = \operatorname{GKdim}(B^2) = 3.
\]
\end{theorem}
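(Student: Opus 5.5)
The plan is to use Proposition \ref{prop:2.7}-(a) to present $B^2$ as an iterated Ore extension of the form required in Theorem \ref{Iteradas-3.3}, and then check the finite-order conditions on the restricted automorphisms. In characteristic $2$ we have $B^2\cong \K[x,z][y_2;d]$, with $d(x)=xz-bcx^2$ and $d(z)=abx^2$. Since $\K[x,z]=\K[x][z;\mathrm{id},0]$, this is a three-step iterated Ore extension
\[
B^2\cong \K[x][z;\mathrm{id},0][y_2;\mathrm{id},d],
\]
which has exactly the shape \eqref{eq:iterated-Ore} with $n=3$, $x_1=x$, $x_2=z$, $x_3=y_2$ and $\sigma_2=\sigma_3=\mathrm{id}$. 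One point needs checking: $d$ extends uniquely from its values on the generators to a derivation of the commutative polynomial ring $\K[x,z]$. This holds automatically, because $\K[x,z]$ is free commutative on $x,z$, so any prescribed values on the generators determine a derivation.

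With this presentation, the hypothesis of Theorem \ref{Iteradas-3.3}-(i) is immediate. For $i=2,3$ the endomorphism $\sigma_i$ is the identity, so its restriction to $Z(R(i-1))$ is the identity, which has order $1$. Hence $B^2$ is PI. Note that we never need to compute $Z(\K[x,z])=\K[x,z]$ or the center of any intermediate ring.

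Theorem \ref{Iteradas-3.3}-(ii) then gives the remaining claims directly. $B^2$ is a finitely generated module over its center, the center is a normal affine domain, and $B^2$ is homologically homogeneous and GK-Cohen--Macaulay with $\operatorname{gldim}=\operatorname{GKdim}=n=3$.

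The only real obstacle is making sure the isomorphism of Proposition \ref{prop:2.7}-(a) is a $\K$-algebra isomorphism onto an iterated Ore extension in the strict sense of \eqref{eq:iterated-Ore}, with its first step a polynomial ring in one variable. The differential operator ring over $\K[x,z]$ has to be rewritten as a tower starting from $\K[x]$, which is what the displayed presentation above does. As a sanity check independent of Brown--Zhang, one could also observe that in characteristic $2$ the elements $x^2,z^2$ satisfy $d(x^2)=d(z^2)=0$, and that $y_2^{2}$ and $y_2^4$ act via $d^2$ and $d^4$. This suggests explicit central elements, but the Theorem \ref{Iteradas-3.3} route avoids that computation.
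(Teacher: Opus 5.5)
Your proposal is correct and matches the paper's proof. Both rewrite $B^2$ via Proposition \ref{prop:2.7}-(a) as the three-step iterated Ore extension $\K[x][z;\mathrm{id},0][y_2;\mathrm{id},d]$, note that every twisting endomorphism is the identity and so has finite order on the relevant centers, and then obtain the PI property and the structural and homological conclusions (including $\operatorname{gldim}=\operatorname{GKdim}=3$) from parts (i) and (ii) of Theorem \ref{Iteradas-3.3}.
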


\begin{proof}
By Proposition \ref{prop:2.7}, $B^2$ admits the iterated Ore extension 
presentation
\[
B^2 \cong \K[x,z][y_2;d] = \K[x][z;\mathrm{id},0][y_2;\mathrm{id},d].
\]
Since the base field has characteristic $2$ and all endomorphisms involved are 
the identity, part (i) of Theorem \ref{Iteradas-3.3} implies that $B^2$ 
satisfies the PI property. Part (ii) of the same theorem then yields the stated 
homological properties, with $\operatorname{gldim}(B^2) = \operatorname{GKdim}(B^2) = 3$.
\end{proof}

\begin{remark}
To the best of the authors' knowledge, and based on the bibliographic review 
carried out, no criterion is currently available in the literature that explicitly 
describes the conditions under which the algebra $B^2$ satisfies the PI property 
when the base field has characteristic zero.
\end{remark}
\subsection*{The algebra $B_q(f)$}

With the aim of exhibiting a family of algebras with trivial Ozone group, Gaddis 
and Yee recently introduced the algebra $B_q(f)$ in \cite{GaddisYee2025}. For 
$q \in \K^{*}$ and $f \in \K[t]$, the algebra $B_q(f)$ is the $\K$-algebra freely 
generated by $u, v, w$, subject to the relations
\[
uv = qvu, \qquad wu = quw + f(v), \qquad wv = q^{-1}vw + f(u).
\]
  The fact that this algebra can be presented as an Ore extension was proved in \cite{GaddisYee2025}.
\begin{lemma}[{\cite[Lemma 2.3]{GaddisYee2025}}]\label{B_q(f)ore}
Let $f \in \K[t]$. Then $B_q(f)$ is the Ore extension
\[
B_q(f) = \K_q[u,v]\,[w;\sigma,\delta],
\]
where $\K_q[u,v] = \K\langle u,v \mid uv - qvu\rangle$ and
\[
\sigma(u) = qu, \quad \sigma(v) = q^{-1}v, \quad 
\delta(u) = f(v), \quad \delta(v) = f(u).
\]
\end{lemma}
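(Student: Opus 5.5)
The plan is to build the Ore extension $\K_q[u,v][w;\sigma,\delta]$ directly and then show that it has exactly the presentation defining $B_q(f)$. The steps are: define $\sigma$, define $\delta$, then compare presentations.

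\textbf{Step 1: $\sigma$ is an automorphism.} Recall that $\K_q[u,v]=\K\langle u,v\mid uv-qvu\rangle$ is a domain with $\K$-basis $\{v^iu^j\}$. Define $\sigma$ on the free algebra $\K\langle u,v\rangle$ by $\sigma(u)=qu$ and $\sigma(v)=q^{-1}v$. It descends to $\K_q[u,v]$ because
\[
\sigma(u)\sigma(v)-q\,\sigma(v)\sigma(u)=uv-q\,vu=0 .
\]
The same recipe with $q\mapsto q^{-1}$ gives an inverse, so $\sigma$ is an automorphism. In particular it is injective, so the Ore extension will be of injective (indeed bijective) type.

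\textbf{Step 2: $\delta$ is a well-defined $\sigma$-derivation.} Any prescription of values on generators extends uniquely to a $\sigma$-derivation $\delta$ of the free algebra $\K\langle u,v\rangle$, via $\delta(ab)=\sigma(a)\delta(b)+\delta(a)b$. Set $\delta(u)=f(v)$ and $\delta(v)=f(u)$. The key point, which I expect to be the only real obstacle, is to check that $\delta$ kills the two-sided ideal generated by $uv-qvu$; it then descends to $\K_q[u,v]$.

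Since $\sigma$ preserves the ideal, it suffices to check $\delta(uv-qvu)=0$ in $\K_q[u,v]$. Compute each term:
\[
\delta(uv)=\sigma(u)f(u)+f(v)v=q\,uf(u)+f(v)v,
\]
\[
\delta(vu)=\sigma(v)f(v)+f(u)u=q^{-1}vf(v)+f(u)u .
\]
Hence
\[
\delta(uv)-q\,\delta(vu)=q\bigl(uf(u)-f(u)u\bigr)+\bigl(f(v)v-vf(v)\bigr)=0,
\]
because $u$ commutes with $f(u)$ and $v$ commutes with $f(v)$. For a general element $a(uv-qvu)b$ of the ideal, the twisted Leibniz rule gives
\[
\delta\bigl(a(uv-qvu)b\bigr)=\sigma(a)\sigma(uv-qvu)\delta(b)+\sigma(a)\delta(uv-qvu)b+\delta(a)(uv-qvu)b .
\]
Each term lies in the ideal, so $\delta$ descends.

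\textbf{Step 3: compare presentations.} In the Ore extension $C:=\K_q[u,v][w;\sigma,\delta]$ we have $wr=\sigma(r)w+\delta(r)$. Taking $r=u$ and $r=v$ gives
\[
wu=quw+f(v),\qquad wv=q^{-1}vw+f(u),
\]
and $uv=qvu$ holds already in the coefficient ring. So there is a surjection $B_q(f)\to C$ sending $u,v,w$ to themselves.

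Conversely, using the universal property of Ore extensions, the inclusion $\K_q[u,v]\to B_q(f)$ together with the element $w$ satisfies $wr=\sigma(r)w+\delta(r)$. It holds on the generators $u,v$ by the defining relations. It propagates to products because both sides are compatible with multiplication: if it holds for $r$ and $s$, then
\[
wrs=(\sigma(r)w+\delta(r))s=\sigma(r)\sigma(s)w+\sigma(r)\delta(s)+\delta(r)s=\sigma(rs)w+\delta(rs).
\]
This yields a map $C\to B_q(f)$. The two maps are mutually inverse on generators, so $B_q(f)\cong C$.

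As a consequence, $B_q(f)$ has $\K$-basis $\{v^iu^jw^k\}$, which is an iterated Ore extension presentation over $\K$.
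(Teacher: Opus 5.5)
Your proposal is correct and follows the same route as the paper: both check that $\sigma$ and $\delta$ respect the relation $uv-qvu$, using the same computation showing $\delta(uv)-q\,\delta(vu)=0$. You also supply steps the paper leaves implicit — that $\delta$ preserves the whole ideal, that $\sigma$ is bijective, and the explicit identification of $B_q(f)$ with the Ore extension via the universal property of Ore extensions.
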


\begin{proof}
The algebra $\K_q[u,v]$ is presented by generators $u, v$ and the single relation 
$r := uv - qvu = 0$. For a $\K$-linear map $\delta$ defined on the generators to 
extend to a well-defined $\sigma$-derivation on the quotient $\K_q[u,v]$, it is 
necessary and sufficient that both $\sigma$ and $\delta$ respect the relation $r$.

\begin{itemize}
\item[(i)] Computing in the free algebra and passing to the quotient:
\[
\sigma(uv - qvu) = \sigma(u)\sigma(v) - q\,\sigma(v)\sigma(u)
= (qu)(q^{-1}v) - q(q^{-1}v)(qu) = uv - qvu = 0.
\]
Hence $\sigma$ is well defined on $\K_q[u,v]$.

\item[(ii)] Using the twisted Leibniz rule $\delta(ab) = \sigma(a)\delta(b) + 
\delta(a)b$, and substituting $\delta(u) = f(v)$ and $\delta(v) = f(u)$, we 
obtain
\[
\delta(uv) = \sigma(u)f(u) + f(v)v = qu\,f(u) + f(v)v,
\]
\[
\delta(vu) = \sigma(v)f(v) + f(u)u = q^{-1}v\,f(v) + f(u)u.
\]
Therefore,
\begin{align*}
\delta(uv - qvu)
&= \delta(uv) - q\,\delta(vu)\\
&= \bigl(qu\,f(u) + f(v)v\bigr) - q\bigl(q^{-1}v\,f(v) + f(u)u\bigr)\\
&= \underbrace{qu\,f(u) - q\,f(u)u}_{= 0}
+ \underbrace{f(v)v - v\,f(v)}_{= 0} = 0,
\end{align*}
where the cancellations follow from the fact that $u$ commutes with every 
polynomial in $u$, and $v$ commutes with every polynomial in $v$. Hence $\delta$ 
is well defined on $\K_q[u,v]$.
\end{itemize}
\end{proof}

\begin{proposition}\label{basepbw}
The set
\[
\mathcal{B} := \{\, v^i u^j w^k \mid i,j,k \in \mathbb{N} \,\}
\]
is a $\K$-basis of $B_q(f)$.
\end{proposition}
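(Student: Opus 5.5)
By Lemma \ref{B_q(f)ore}, $B_q(f)$ is the Ore extension $\K_q[u,v][w;\sigma,\delta]$, where $\sigma(u)=qu$ and $\sigma(v)=q^{-1}v$. The plan is to use the standard basis theorem for Ore extensions twice. An Ore extension $S[w;\sigma,\delta]$ with $\sigma$ injective is a free left $S$-module with basis $\{w^k\mid k\ge 0\}$; this is where $q\neq 0$ enters, since it makes $\sigma$ an automorphism. Hence every element of $B_q(f)$ can be written uniquely as $\sum_k s_k w^k$ with $s_k\in\K_q[u,v]$.

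Next, I will show that $\{v^iu^j\mid i,j\ge 0\}$ is a $\K$-basis of the quantum plane $\K_q[u,v]$. The relation $uv=qvu$ makes $\K_q[u,v]\cong \K[v][u;\tau]$ with $\tau(v)=qv$, because $uv=\tau(v)u$. So $\K_q[u,v]$ is free as a left $\K[v]$-module on the powers $u^j$, and therefore $\{v^iu^j\}$ is a $\K$-basis. An alternative route is the diamond lemma: with the ordering $v<u$ and the single reduction $uv\mapsto qvu$, there are no overlap ambiguities, so the irreducible words $v^iu^j$ form a basis.

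Combining the two steps, every element of $B_q(f)$ is uniquely $\sum_{i,j,k} c_{ijk}\,v^iu^jw^k$. Spanning follows from the two decompositions, and linear independence follows from the uniqueness in each. This is exactly the statement that $\mathcal{B}$ is a $\K$-basis.

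The only step needing care is justifying that the presentation by generators and relations really coincides with the Ore extension. This is what Lemma \ref{B_q(f)ore} provides: it shows that $\sigma$ and $\delta$ are well defined, and then the Ore extension's relations $wa=\sigma(a)w+\delta(a)$ for $a=u,v$ are precisely $wu=quw+f(v)$ and $wv=q^{-1}vw+f(u)$. I therefore expect no real obstacle beyond citing the Ore basis theorem; if a self-contained argument is preferred, I would instead apply Bergman's diamond lemma to the three rewriting rules $uv\mapsto qvu$, $wu\mapsto quw+f(v)$, $wv\mapsto q^{-1}vw+f(u)$ and resolve the single overlap $wuv$. Resolving it amounts to the same computation $\delta(uv-qvu)=0$ already carried out in the proof of Lemma \ref{B_q(f)ore}.
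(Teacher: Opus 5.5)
Your proposal is correct and follows the paper's proof: write every element uniquely as $\sum_k a_k w^k$ with $a_k\in\K_q[u,v]$ using the Ore extension structure from Lemma~\ref{B_q(f)ore}, then expand each $a_k$ in the basis $\{v^iu^j\}$ of the quantum plane. The paper simply quotes that quantum-plane basis, whereas you also justify it via $\K_q[u,v]\cong\K[v][u;\tau]$; note that freeness of $S[w;\sigma,\delta]$ on $\{w^k\}$ holds for any endomorphism $\sigma$, so $q\neq 0$ is needed only for $\sigma(v)=q^{-1}v$ to be defined, not for the basis theorem.
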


\begin{proof}
By Lemma~\ref{B_q(f)ore}, $B_q(f)$ is the Ore extension $\K_q[u,v][w;\sigma,\delta]$ 
and, as a $\K$-vector space, is a free left $\K_q[u,v]$-module with basis 
$\{1, w, w^2, \dots\}$. Hence every element can be written uniquely as 
$\sum_{k \ge 0} a_k w^k$ with $a_k \in \K_q[u,v]$. Since the quantum plane 
$\K_q[u,v] = \K\langle u,v \mid uv = qvu\rangle$ has PBW basis 
$\{v^i u^j \mid i,j \in \mathbb{N}\}$, every element of $B_q(f)$ can be written 
uniquely as a finite $\K$-linear combination of monomials $v^i u^j w^k$, and 
therefore $\mathcal{B}$ is a $\K$-basis of $B_q(f)$.
\end{proof}

\begin{proposition}\label{prop1}
Let $\K$ be a field, $q \in \K^*$, and $f \in \K[t]$ with $\deg f \le 1$. Then 
$B_q(f)$ is a skew PBW extension of $\K$ of the form 
$\sigma(\K)\langle v, u, w\rangle$.
\end{proposition}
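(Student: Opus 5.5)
We check the four conditions of Definition \ref{pbwt} with $R=\K$ and the ordered generators $x_1:=v$, $x_2:=u$, $x_3:=w$. Write $f(t)=\alpha t+\beta$ with $\alpha,\beta\in\K$; this is where $\deg f\le 1$ is used.

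Conditions (i) and (ii) come almost for free. $\K$ is a subring of $B_q(f)$ as the scalars. Proposition \ref{basepbw} shows that $\{v^iu^jw^k\}$ is a $\K$-basis, and these are exactly the standard monomials $x_1^{\alpha_1}x_2^{\alpha_2}x_3^{\alpha_3}$. Hence $B_q(f)$ is a free left $\K$-module with basis $\mathrm{Mon}$.

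Condition (iii) is immediate. Every generator commutes with scalars, so $x_ir=rx_i$ and we may take $c_{i,r}=r\neq 0$.

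The real content is condition (iv). We check each pair $i<j$ from the defining relations.
\begin{itemize}
\item[] For $(v,u)$: $uv=qvu$, i.e. $x_2x_1=q\,x_1x_2$, so $c_{1,2}=q\neq 0$.
\item[] For $(u,w)$: $wu=quw+f(v)=q\,uw+\alpha v+\beta$, so $c_{2,3}=q$ and the remainder lies in $\K+\K v$.
\item[] For $(v,w)$: $wv=q^{-1}vw+f(u)=q^{-1}vw+\alpha u+\beta$, so $c_{1,3}=q^{-1}$ and the remainder lies in $\K+\K u$.
\end{itemize}
The pairs with $i>j$ follow by inverting these relations. The diagonal case $i=j$ is trivial with $c_{i,i}=1$. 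All constants are invertible, so the extension is even bijective.

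The only point needing care, and the only place the hypothesis enters, is that each remainder lies in $\K+\K x_1+\K x_2+\K x_3$. For $\deg f\ge 2$ the terms $f(v)$ and $f(u)$ would contain $v^2$ or $u^2$, which would violate (iv); for $\deg f\le 1$ they are linear. No further relations need to be checked, because the PBW basis of Proposition \ref{basepbw} already guarantees the free-module condition. This shows $B_q(f)\cong\sigma(\K)\langle v,u,w\rangle$.
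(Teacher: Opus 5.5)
Your proposal is correct and follows essentially the same route as the paper: write $f(t)=at+b$, use the basis $\{v^iu^jw^k\}$ from Proposition~\ref{basepbw} for the freeness condition, note that scalars commute with all generators, and read off $c_{i,j}\in\{q,q^{-1}\}$ from the defining relations, whose remainders $f(v)$ and $f(u)$ lie in $\K+\K u+\K v+\K w$. Your extra points are also correct and are left implicit in the paper: the pairs with $i>j$ come from inverting the relations, and the extension is bijective.
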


\begin{proof}
Since $\mathsf{deg}(f) \le 1$, there exist $a, b \in \K$ such that $f(t) = at + b$. The 
defining relations of $B_q(f)$ become
\begin{equation}\label{eq:rels-linear}
vu = q^{-1}uv, \qquad wu = quw + av + b, \qquad wv = q^{-1}vw + au + b.
\end{equation}
We verify the conditions of Definition \ref{pbwt}:
\begin{itemize}
\item[(i)] By Proposition~\ref{basepbw}, $B_q(f)$ has PBW basis 
$\mathcal{B} = \{v^i u^j w^k \mid i,j,k \in \mathbb{N}\}$.

\item[(ii)] For all $r \in \K$ and $x_i \in \{v,u,w\}$ we have $x_i r = rx_i$, 
since $\K$ is the base field. Hence $c_{i,r} = r$ and $x_i r - c_{i,r}x_i = 0 \in \K$.

\item[(iii)] From \eqref{eq:rels-linear}:
\[
vu - q^{-1}uv = 0 \in \K \subseteq \K + \K u + \K v + \K w,
\]
\[
wu - q\,uw = av + b \in \K + \K v \subseteq \K + \K u + \K v + \K w,
\]
\[
wv - q^{-1}vw = au + b \in \K + \K u \subseteq \K + \K u + \K v + \K w.
\]
\end{itemize}
Therefore, $B_q(f) \cong \sigma(\K)\langle v, u, w\rangle$ is a skew PBW 
extension of $\K$.
\end{proof}

\begin{remark}
Let $\K$ be a field, $q \in \K^*$, and $b \in \K$. Then the algebra $B_q(b)$ is a 
skew PBW extension of the form $\sigma(\K[v])\langle u, w\rangle$.
\end{remark}

\begin{proposition}\label{prop:PI-Bqf-charp}
Let $\K$ be a field of positive characteristic $p > 0$, let $q \in \K^{*}$, and 
let $f \in \K[t]$. The algebra $B_q(f)$ satisfies the PI property if and only if 
$q$ is a root of unity. In that case, $B_q(f)$ is a finitely generated module 
over $Z(B_q(f))$, the center $Z(B_q(f))$ is a normal affine $\K$-domain, and
\[
\operatorname{gldim}(B_q(f)) = \operatorname{GKdim}(B_q(f)) = 3.
\]
\end{proposition}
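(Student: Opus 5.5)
We prove Proposition~\ref{prop:PI-Bqf-charp} by presenting $B_q(f)$ as a three-step iterated Ore extension and applying Theorem~\ref{Iteradas-3.3}. The quantum plane $\K_q[u,v]$ can be written as $\K[v][u;\sigma_u]$ with $\sigma_u(v)=qv$, since $uv=qvu$. By Lemma~\ref{B_q(f)ore}, we then have
\[
B_q(f)\cong \K[v][u;\sigma_u][w;\sigma,\delta],
\]
where $\sigma(u)=qu$, $\sigma(v)=q^{-1}v$, $\delta(u)=f(v)$ and $\delta(v)=f(u)$. Set $R(1)=\K[v]$, $R(2)=\K_q[u,v]$ and $R(3)=B_q(f)$. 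Theorem~\ref{Iteradas-3.3}(i) says that $B_q(f)$ is PI if and only if $\sigma_u|_{\K[v]}$ and $\sigma|_{Z(\K_q[u,v])}$ both have finite order. Once PI is established, Theorem~\ref{Iteradas-3.3}(ii) with $n=3$ gives all the remaining assertions: finite generation over the center, the center being a normal affine domain, and $\operatorname{gldim}=\operatorname{GKdim}=3$.

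\emph{Necessity.} Suppose $B_q(f)$ is PI. Then $\sigma_u|_{\K[v]}$ has finite order, say $\sigma_u^m=\mathrm{id}$. In particular $\sigma_u^m(v)=q^mv=v$, so $q^m=1$ and $q$ is a root of unity. Alternatively, $\K_q[u,v]$ is a PI subalgebra, and Example~\ref{ej:plano-cuantico} gives the same conclusion.

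\emph{Sufficiency.} Let $q$ be a root of unity of order $m$. Then $\sigma_u^m(v)=q^mv=v$, so $\sigma_u|_{\K[v]}$ has finite order. Next we compute $Z(\K_q[u,v])$ as in the $\mathcal{H}_{p,q}$ proof. In the basis $v^iu^j$, conjugation by $u$ and by $v$ multiplies each monomial by a power of $q$. Commuting with $u$ forces $q^i=1$, and commuting with $v$ forces $q^j=1$. Hence $Z(\K_q[u,v])=\K[u^m,v^m]$; if $m=1$ this is the whole commutative ring $\K[u,v]$. On this center $\sigma(u^m)=q^mu^m=u^m$ and $\sigma(v^m)=q^{-m}v^m=v^m$, so $\sigma$ restricts to the identity on $Z(R(2))$ and in particular has finite order. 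Theorem~\ref{Iteradas-3.3} therefore yields the PI property, and part (ii) gives the rest.

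The main point requiring care is checking that the presentation really matches the form in \eqref{eq:iterated-Ore}, so that Theorem~\ref{Iteradas-3.3} applies to an arbitrary $f$. The base is $\K[x_1]$ with $x_1=v$, and $\sigma_u$ is an automorphism of $\K[v]$ with zero derivation. The map $\sigma$ is an automorphism of $\K_q[u,v]$, and $\delta$ is a well-defined $\sigma$-derivation by Lemma~\ref{B_q(f)ore}. The derivation $\delta$ plays no role in the criterion of Theorem~\ref{Iteradas-3.3}(i), which only involves the $\sigma_i$. Finally, the center computation is the only genuinely computational step, and it is routine once the commutation rules for the basis monomials $v^iu^j$ are written down.
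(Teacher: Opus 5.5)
Your proof is correct and follows the paper's argument: you present $B_q(f)$ as a three-step iterated Ore extension via Lemma~\ref{B_q(f)ore} and apply Theorem~\ref{Iteradas-3.3}, reading off $q^m=1$ from the first step and checking that $\sigma$ has finite order on $Z(\K_q[u,v])$ for the converse. The differences are cosmetic: you take $\K[v]$ as the base, and your twist $\sigma_u(v)=qv$ is the one consistent with $uv=qvu$. Your explicit computation $Z(\K_q[u,v])=\K[u^m,v^m]$ is correct but unnecessary, since, as the paper notes, $\sigma^m$ fixes $u$ and $v$ and is therefore the identity on all of $\K_q[u,v]$.
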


\begin{proof}
By Lemma \ref{B_q(f)ore}, $B_q(f) \cong \K[u][v;\sigma_2,0][w;\sigma_3,\delta_3]$, 
where $\sigma_2(u) = qu$, $\delta_2 = 0$, $\sigma_3(u) = qu$, 
$\sigma_3(v) = q^{-1}v$, $\delta_3(u) = f(v)$, and $\delta_3(v) = f(u)$. We 
apply Theorem \ref{Iteradas-3.3} with $R(1) := \K[u]$ and 
$R(2) := \K\langle u,v\rangle = \K_q[u,v]$.

Suppose that $B_q(f)$ satisfies the PI property. By 
Theorem \ref{Iteradas-3.3}-(i), the restriction $\sigma_2|_{Z(R(1))}$ has finite 
order. Since $Z(\K[u]) = \K[u]$ and $\sigma_2^m(u) = q^m u$, the condition 
$\sigma_2^m = \mathrm{id}|_{\K[u]}$ is equivalent to $q^m = 1$. Hence $q$ is a 
root of unity.

Conversely, suppose that $q$ is a root of unity and let $\ell \ge 1$ be such 
that $q^\ell = 1$. Then $\sigma_2^\ell(u) = q^\ell u = u$, so 
$\sigma_2|_{Z(R(1))}$ has finite order. Moreover, $\sigma_3^\ell(u) = q^\ell u = u$ 
and $\sigma_3^\ell(v) = q^{-\ell}v = v$, so $\sigma_3^\ell$ fixes the generators 
of $R(2) = \K_q[u,v]$ and, in particular, fixes every element of $Z(R(2))$. 
Hence $\sigma_3|_{Z(R(2))}$ has finite order. Theorem~\ref{Iteradas-3.3} then 
implies that $B_q(f)$ satisfies the PI property, and part (ii) of the same 
theorem yields the stated properties of the center and the homological dimensions.
\end{proof}

It is noteworthy that, over a field of positive characteristic, the criterion for 
$B_q(f)$ to satisfy the PI property depends only on the parameter $q$ and is 
independent of the polynomial $f$. In contrast, the characteristic-zero case, 
studied in Proposition~3.11 of \cite{GaddisYee2025}, requires additional 
conditions on $f$: besides $q$ being a primitive $\ell$-th root of unity with 
$\ell > 1$, if $f(t) = \sum_{j \ge 0} c_j t^j$ and 
$\mathrm{supp}(f) := \{j \ge 0 \mid c_j \neq 0\}$, one must have 
$\ell \nmid (j+1)$ for all $j \in \mathrm{supp}(f)$.

\begin{example}\label{ej:PI-charp-noPI-char0}
Take $f(t) = t$, so that $\mathrm{supp}(f) = \{1\}$, and $q = -1$, which is a 
root of unity of order $\ell = 2$.
\begin{enumerate}[(1)]
    \item \emph{Positive characteristic.} If $\K = \mathbb{F}_3$, then 
    $B_{-1}(t)$ satisfies the PI property.
    
    \item \emph{Characteristic zero.} If $\K = \mathbb{Q}$, then $q = -1$ 
    is a primitive root of unity of order $\ell = 2$. However, for 
    $j = 1 \in \mathrm{supp}(f)$ one has $j+1 = 2$ and $\ell = 2$ divides $2$, 
    so the additional condition fails and $B_{-1}(t)$ does not satisfy the PI 
    property.
\end{enumerate}
\end{example}
\bibliographystyle{plain}
\bibliography{biblio}
\bigskip
\noindent
J.G.:\\
Escuela de Matemáticas y Estadística, Universidad Pedagógica y Tecnológica de Colombia,\\
Tunja, Colombia\\
james.gomez@uptc.edu.co\\
\\
C.G.:\\
Departamento de Matemáticas, Pontificia Universidad Javeriana,\\
Bogotá, Colombia\\
gallegoj.cm@javeriana.edu.co

\end{document}